\def\Hom{\hbox{\rm Hom\,}}
\def\Hom{\hbox{\rm Hom\,}}
\def\im{\hbox{\rm im}}
\def\mod{\hbox{\rm mod\,}}
\def\Rep{\hbox{\rm Rep\,}}
\def\sing{\hbox{\rm Sing}}
\def\udim{\hbox{\rm \underline{dim}\,}}
\def\ra{{\rightarrow\,}}
\title{The Gabriel-Roiter measure for  $\widetilde{\mathbb{A}}_n$ II}
\author{Bo Chen}
\date{}
\email {mcebbchen@googlemail.com}
\address {Universit\"at zu K\"oln\\
          Mathematisches Institut\\
                   Weyertal 86-90\\
           D-50931 K\"oln\\ Germany}
\thanks {The author is supported by DFG-Schwerpunktprogramm
`Representation theory'.}
\keywords {Tame quiver of type
$\widetilde{\mathbb{A}}_n$, string modules, Gabriel-Roiter measure,
direct predecessor.}
\newtheorem{theo}{Theorem}[section]
\newtheorem{lemm}[theo]{Lemma}
\newtheorem{coro}[theo]{Corollary}
\newtheorem{prop}[theo]{Proposition}
\newtheorem*{theo*}{Theorem A}
\newtheorem*{theo**}{Theorem B}
\newenvironment{remark}[1][Remark]{\begin{trivlist}
\item[\hskip \labelsep {\bfseries #1}]}{\end{trivlist}}
\begin{document}

\begin{abstract}
Let $Q$ be a tame quiver of type $\widetilde{\mathbb{A}}_n$ and
$\Rep(Q)$ the category of finite dimensional  representations over
an algebraically closed field. A representation is simply called a
module.  It will be shown that a regular string module has, up to isomorphism,
at most two Gabriel-Roiter submodules.
The quivers $Q$ with sink-source orientations will be characterized as
those, whose central parts do not contain
preinjective modules.
It will also be shown
that there are only finitely many (central) Gabriel-Roiter measures admitting no
direct predecessors. This fact will be generalized for all tame quivers.

\end{abstract}

\maketitle

\footnotesize{{\it Mathematics Subject Classification} (2010). 16G20,16G70}

\section{Introduction}

Let $\Lambda$ be an Artin algebra and $\mod\Lambda$ the category of
finitely generated left $\Lambda$-modules. For each
$M\in\mod\Lambda$, we denote by $|M|$ the length of $M$. The symbol
$\subset$ is used to denote proper inclusion. The Gabriel-Roiter (GR
for short) measure $\mu(M)$ for a $\Lambda$-module $M$ was defined to be a
rational number
in \cite{R2} by induction as follows:
$$\mu(M)=\left\{\begin{array}{ll}
0 & \textrm{if $M=0$};\\
\max_{N\subset M}\{\mu(N)\} & \textrm{if $M$ is decomposable};\\
\max_{N\subset M}\{\mu(N)\}+\frac{1}{2^{|M|}} & \textrm{if $M$ is indecomposable}.\\
\end{array}\right.$$
(In later discussion, we will use the original definition for our
convenience, see \cite{R1} or Section \ref{GRdef} below.) The
so-called Gabriel-Roiter submodules of an indecomposable module are
defined to be the indecomposable proper submodules with maximal GR
measure.

A rational number $\mu$ is called a GR measure for an algebra $\Lambda$ if
there is an indecomposable $\Lambda$-module $M$ such that $\mu(M)=\mu$.
Using the Gabriel-Roiter  measure, Ringel obtained a partition of the
module category for any Artin algebra of infinite representation
type \cite{R1,R2}: there are infinitely many GR measures $\mu_i$ and
$\mu^i$ with $i$ natural numbers, such that
$$\mu_1<\mu_2<\mu_3<\ldots\quad \ldots <\mu^3<\mu^2<\mu^1$$ and such that any
other GR measure $\mu$ satisfies $\mu_i<\mu<\mu^j$ for all $i,j$.
The GR measures $\mu_i$ (resp. $\mu^i$) are called take-off (resp.
landing) measures. Any other GR measure is called a central measure.
An indecomposable module  is  called a  take-off (resp. central,
landing) module if its GR measure is a take-off (resp. central,
landing) measure.

To calculate the GR measure of a given indecomposable module, it is
necessary to know the GR submodules. Thus it is interesting to know
the number of the isomorphism classes of the GR submodules for a
given indecomposable module. It was conjectured that for a
representation-finite algebra (over an algebraically closed field),
each indecomposable module has at most three GR submodules.  In
\cite{Ch1}, we have proved the conjecture for representation-finite
hereditary algebras. In this paper, we will start to study the GR
submodules of string modules. In particular, we will show in Section
\ref{string}
that each string module, which contains no band submodules, has at
most two GR submodules, up to isomorphism. As an application, we
show for a tame quiver $Q$ of type $\widetilde{\mathbb{A}}_n$, $n\geq 1$, that
if an indecomposable module on an exceptional regular component of the Auslander-Reiten quiver
has, up to isomorphism, precisely two GR
submodules, then one of the two GR inclusions is an irreducible
monomorphism. A description of the numbers of the GR submodules will
also be presented there.

Let $\mu,\mu'$ be two GR measures for $\Lambda$. We call $\mu'$  a
{\bf direct successor} of $\mu$ if, first, $\mu<\mu'$ and second,
there does not exist a GR measure $\mu''$ with $\mu<\mu''<\mu'$. The
so-called {\bf Successor Lemma} in \cite{R2} states that any
Gabriel-Roiter measure $\mu$ different from $\mu^1$ has a direct
successor. There is no `Predecessor Lemma'.  It is
clear that any GR measure different from $\mu_1$, the minimal one,
over a representation-finite Artin algebra
has a direct predecessor.  In this paper, we start to study the GR measures 
admitting no direct predecessors
for representation-infinite algebras.
An ideal test class are the path algebras of tame quivers, as the representation
theory of this category is very well understood and many first properties of the GR-measures
for these algebras are already known
\cite{Ch4}. Among them, the quivers of type
$\widetilde{\mathbb{A}}_n$, $n\geq 1$,
are of special interests because the GR submodules of an indecomposable module
can be in some sense easily determined (see \cite{Ch3}, or Proposition \ref{bigprop} below).
In Section \ref{predecessors}, it will be shown that for a quiver of type $\widetilde{\mathbb{A}}_n$
there are only finitely many GR measures admitting no direct predecessors.
This gives rise to the following question:  does a
representation-infinite (hereditary) algebra (over an algebraically
closed field) being of tame type imply that there are only finitely
many GR measures having no direct predecessors and vice versa?
The proof of the above mentioned result for quivers of
type $\widetilde{\mathbb{A}}_n$ can be generalized to those of types
$\widetilde{\mathbb{D}}_n$ ($n\geq 4$), $\widetilde{\mathbb{E}}_6$,
$\widetilde{\mathbb{E}}_7$ and $\widetilde{\mathbb{E}}_8$.
Thus we partially answer the above question.

It was shown in \cite{R1} that all landing modules are preinjective
modules in the sense of Auslander-Smal\o\ \cite{AS}. However, not
all preinjective modules are landing modules in general. It is
interesting to study the preinjective modules, which are in the central
part. In Section \ref{precen}, We will show that for a tame quiver $Q$ of type
$\widetilde{\mathbb{A}}_n$, if there is a preinjective central
module, then there are actually infinitely many of them. However, it is
possible that the central part does not contain any preinjective
module. We characterize the tame quivers of type
$\widetilde{\mathbb{A}}_n$ with this property. In particular, we
show that the quiver $Q$ of type $\widetilde{\mathbb{A}}_n$ is
equipped with a sink-source orientation if and only if any
indecomposable preinjective module is either a landing module or a
take-off module.

Throughout, we  fix an algebraically closed field $k$ and by algebras, representations or modules
we always mean finite dimensional ones over $k$, unless stated otherwise.

\section{Preliminaries and known results}\label{preliminaries}
\subsection{The Gabriel-Roiter measure}\label{GRdef}
We first  recall the original definition of the Gabriel-Roiter measure
\cite{R1}. Let $\mathbb{N}$=$\{1,2,\ldots\}$ be the set of
natural numbers and $\mathcal{P}(\mathbb{N})$ be the set of all
subsets of $\mathbb{N}$.  A total order on
$\mathcal{P}(\mathbb{N})$ can be defined as follows: if $I$,$J$
are two different subsets of $\mathbb{N}$, write $I<J$ if the
smallest element in $(I\backslash J)\cup (J\backslash I)$ belongs to
J. Also we write $I\ll J$ provided $I\subset J$ and for all elements
$a\in I$, $b\in J\backslash I$, we have $a<b$. We say that $J$ {\bf
starts with} $I$ if $I=J$ or $I\ll J$.

Let $\Lambda$ be an Artin algebra and  $\mod\Lambda$ be the category
of finitely generated left $\Lambda$-modules. For each
$M\in\mod\Lambda$, let $\mu(M)$ be the maximum of the sets
$\{|M_1|,|M_2|,\ldots, |M_t|\}$, where $M_1\subset M_2\subset \ldots
\subset M_t$ is a chain of indecomposable submodules of $M$. We call
$\mu(M)$ the {\bf Gabriel-Roiter measure}  of $M$. If $M$ is an
indecomposable $\Lambda$-module, we call an inclusion $T\subset M$
with $T$ indecomposable a {\bf GR inclusion} provided
$\mu(M)=\mu(T)\cup\{|M|\}$, thus if and only if every proper
submodule of $M$ has Gabriel-Roiter measure at most $\mu(T)$. In
this case, we call $T$ a {\bf GR submodule} of $M$.
A subset $I$ of $\mathbb{N}$ is called a GR measure of $\Lambda$ if there is
an indecomposable module $M$ with $\mu(M)=I$.

\begin{remark} Although in the introduction we define
the Gabriel-Roiter measure in a different way, these two  definitions (orders) coincide.
In fact, for each
$I=\{a_i|i\}\in\mathcal{P}(\mathbb{N})$, let
$\mu(I)=\sum_{i}\frac{1}{2^{a_i}}$. Then $I<J$ if and only if
$\mu(I)<\mu(J)$.
\end{remark}

We denote by $\mathcal{T}$, $\mathcal{C}$ and $\mathcal{L}$ the
collection of indecomposable $\Lambda$-modules, which are in the take-off part,
the central part and the landing part, respectively. Now we present one result
concerning Gabriel-Roiter measures, which will be used later on.
For more basic properties we refer to \cite{R1,R2}.

\begin{prop}\label{epi} Let $\Lambda$ an Artin algebra and $X\subset M$ be a GR
inclusion.
\begin{itemize}
  \item[(1)] If $\mu(X)<\mu(Y)<\mu(M)$,
                          then $|Y|>|M|$.
  \item[(2)] There is an irreducible monomorphism $X\ra Y$ with $Y$
            indecomposable and an epimorphism $Y\ra M$.
\end{itemize}
\end{prop}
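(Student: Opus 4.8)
The plan is to treat the two parts separately. Part (1) is purely combinatorial, resting only on the total order on $\mathcal{P}(\mathbb{N})$ and on the equation $\mu(M)=\mu(X)\cup\{|M|\}$ defining a GR inclusion; part (2) is an Auslander--Reiten argument that feeds on (1).

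For (1), I would first observe that since $X\subsetneq M$ every element of $\mu(X)$ is at most $|X|<|M|$, so $\mu(M)$ starts with $\mu(X)$; in particular $\mu(M)$ and $\mu(X)$ have the same elements below $|M|$. Given an indecomposable $Y$ with $\mu(X)<\mu(Y)<\mu(M)$, let $e$ be the smallest element of the symmetric difference of $\mu(X)$ and $\mu(Y)$; from $\mu(X)<\mu(Y)$ one gets $e\in\mu(Y)\setminus\mu(X)$, and $\mu(X),\mu(Y)$ agree below $e$. The first step is to show $e>|X|$: otherwise $e\le|X|<|M|$, so $e\notin\mu(M)$, while $\mu(Y)$ and $\mu(M)$ agree below $e$ (each agreeing there with $\mu(X)$), whence $e$ is the smallest element of the symmetric difference of $\mu(Y)$ and $\mu(M)$ and lies in $\mu(Y)$ --- forcing $\mu(M)<\mu(Y)$, a contradiction. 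Hence $e>|X|=\max\mu(X)$, so $\mu(X)$ is exactly the set of elements of $\mu(Y)$ below $e$, i.e. $\mu(Y)$ also starts with $\mu(X)$. Now $\mu(Y)$ and $\mu(M)=\mu(X)\cup\{|M|\}$ both start with $\mu(X)$, so their comparison is governed by the parts lying above $|X|$, where $\mu(M)$ contributes only $|M|$ and $\mu(Y)$ contributes a set with least element $e$; a short case distinction shows that $e<|M|$ and $e=|M|$ each again yield $\mu(M)<\mu(Y)$ (the case $e=|M|$ because then $\mu(Y)\supseteq\mu(X)\cup\{|M|\}$ with any further element of $\mu(Y)$ lying above $|M|$), contradicting $\mu(Y)<\mu(M)$. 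Therefore $e>|M|$, and since $e\in\mu(Y)$ we conclude $|Y|=\max\mu(Y)\ge e>|M|$.

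For (2), note first that $X$ is not injective: an injective submodule of the indecomposable module $M$ would split off, and $X\ne M$. Hence $X$ has a source map $\iota\colon X\to E$, which by the almost split sequence $0\to X\to E\to\tau^{-1}X\to 0$ is a monomorphism; writing $E=\bigoplus_i E_i$, each component $\iota_i\colon X\to E_i$ is irreducible and therefore, $X$ being indecomposable, a monomorphism or an epimorphism. The inclusion $f\colon X\hookrightarrow M$ is not a split monomorphism, so it factors as $f=g\iota$ with $g=[g_i]\colon E\to M$. If $g$ is a split epimorphism then $M$ is isomorphic to some summand $E_i$; since $|E_i|=|M|>|X|$ the irreducible map $\iota_i$ cannot be epi, hence is a monomorphism, and composing with the isomorphism $E_i\cong M$ gives the desired $Y=E_i$. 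In general one has to extract from the factorization $f=g\iota$ a single summand $E_i$ carrying both maps, i.e. with $\iota_i$ a monomorphism and an epimorphism $E_i\to M$. This is where part (1) enters: for each index $i$ with $\iota_i$ monic one has $\mu(E_i)>\mu(X)$ (since $X$ is then a proper indecomposable submodule of the indecomposable $E_i$), whereas $X$ being the GR submodule of $M$ forces every proper indecomposable submodule of $M$ to have GR measure at most $\mu(X)$; combining these with the length bound of (1) is meant to rule out the configurations in which no summand works. I expect this extraction step --- passing from the (in general decomposable) module $E$ to one indecomposable summand carrying both the required monomorphism and the required epimorphism onto $M$ --- to be the main obstacle.
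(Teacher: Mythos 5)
Your part (1) is correct and complete: it is exactly the elementary unwinding of the total order on $\mathcal{P}(\mathbb{N})$ and of the identity $\mu(M)=\mu(X)\cup\{|M|\}$ that the paper dismisses with ``a direct consequence of the definition'', so there is nothing to add there.

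Part (2) is where the genuine gap lies, and you have named it yourself: everything up to the factorization $f=g\iota$ through the source map $\iota\colon X\to E=\bigoplus_i E_i$ is routine, and the whole content of the statement is the ``extraction step'' you leave open. (Note that the paper does not reprove (2) either; it cites \cite[Proposition 3.2]{Ch2}, so the step you postpone is precisely the content of that citation.) Moreover, the closing strategy you indicate does not work as stated: the inequality ``every proper indecomposable submodule of $M$ has GR measure at most $\mu(X)$'' applies to submodules of $M$, hence at best to $\mathrm{im}\,(g_i)$, not to $E_i$ itself, and when $g_i$ is neither injective nor surjective neither this bound nor part (1) gives any information about $E_i$; so the configurations you need to exclude are not touched by the tools you list. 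The standard way to make progress here uses a genuinely different ingredient, Ringel's main property of the GR measure: a monomorphism from an indecomposable $X$ into a direct sum of indecomposables all of GR measure at most $\mu(X)$ splits. Applied to $U=\mathrm{im}\,(g)\subseteq M$ (whose indecomposable summands are proper submodules of $M$, hence have measure at most $\mu(X)$), it shows that if $g$ were not surjective the inclusion $X\to U$ would split, making the source map $\iota$ a split monomorphism and contradicting the non-splitness of the Auslander--Reiten sequence; thus $g$ is an epimorphism. Even after that, one still has to pass from the epimorphism $\bigoplus_i E_i\to M$ to a single indecomposable $Y$ with $|Y|>|X|$ (so that the irreducible map $X\to Y$ is monic) admitting an epimorphism onto $M$, which requires a further argument and is not supplied by your sketch. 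As it stands, part (2) is set up but not proved.
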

The first statement is a direct consequence of the definition. For a
proof of the second statement, we refer to \cite[Proposition
3.2]{Ch2}.

\subsection{}Let $Q$ be a tame quiver of type
$\widetilde{\mathbb{A}}_n,n\geq 1$, $\widetilde{\mathbb{D}}_n,n\geq 4$, $\widetilde{\mathbb{E}}_6$,
$\widetilde{\mathbb{E}}_7$ or $\widetilde{\mathbb{E}}_8$, and $\Rep(Q)$ the category of
finite dimensional representations over an algebraically closed
field. We simply call the representations in $\Rep(Q)$ modules. We
briefly recall some notations and refer to \cite{ARS, DR} for
details. If $X$ is a quasi-simple module, then there is a unique
sequence $X=X_1 \ra X_2\ra \ldots\ra X_r\ra\ldots$ of irreducible
monomorphisms. Any indecomposable regular module $M$ is of the
form $M\cong X_i$ with $X$  a quasi-simple module (quasi-socle of
$M$) and $i$ a natural number (quasi-length of $M$). The  rank of an
indecomposable regular module $M$ is the minimal positive integer
$r$ such that $\tau^rM=M$, where $\tau=DTr$ denotes the Auslander-Reiten translation.
A regular component (standard stable tube) of the
Auslander-Reiten quiver of $Q$ is called exceptional if its rank
(the rank of any quasi-simple module on it) $r>1$.  If $X$ is
quasi-simple of rank $r$, then the dimension vector $\udim
X_r=\delta=\sum_{i=1}^r\tau^rX$, where $\delta$ is the minimal
positive imaginary root.  Let $|\delta|=\sum_{\nu\in Q}
\delta_\nu$. In particular, if $Q$ is of type $\widetilde{\mathbb{A}}_n$, then
$\delta_\nu=1$ for each $\nu\in Q$ and $|\delta|=n+1$.  A quasi-simple module of rank 1 will be called a
homogeneous quasi-simple module. We denote by $H_i$ an indecomposable
homogeneous module with quasi-length $i$. (There are infinitely many
homogeneous tubes.  However, the GR measure $\mu(H_i)$ does not depend on the
choice of $H_i$.) We denote by
$\mathcal{P}$, $\mathcal{R}$ and $\mathcal{I}$ the collection of
indecomposable preprojective, regular and preinjective modules,
respectively.

We collect some known facts in the following proposition, which will
be quite often used in our later discussion. The proofs can be found
in \cite[Section 3]{Ch3}.
\begin{prop}\label{bigprop} Let $Q$ be a tame quiver of type $\widetilde{\mathbb{A}}_n$, $n\geq 1$.
\begin{itemize}
    \item[(1)]  Let $\iota:T\subset M$  be a GR inclusion.
               \begin{itemize}
                   \item[a] If $M\in\mathcal{P}$, then $\iota$ is an irreducible monomorphism.
                   \item[b] If $M\in\mathcal{R}$ is a quasi-simple module,
                             then $T\in\mathcal{P}$.
                   \item[c] If $M=X_i\in\mathcal{R}$ with $X$
                              quasi-simple and $i>1$, then $T\in\mathcal{P}$ or $T\cong X_{i-1}$.
                   \item[d] If $M\in\mathcal{I}$, then
                              $T\in\mathcal{R}$.
               \end{itemize}
    \item[(2)] If $X\in\mathcal{P}$, then $X\in\mathcal{T}$ and $\mu(X)<\mu(H_1)$.
    \item[(3)] Let $H_1$ be a homogeneous quasi-simple module. Then $\mu(H_1)$ is a central measure and $\mu(M)>\mu(H_1)$
               if $M\in\mathcal{I}$ satisfies $\udim M>\delta$.
    \item[(4)] Let $X$ be a quasi-simple module of rank $r$. Then
               \begin{itemize}
                   \item[a] If $\mu(X_r)< \mu(H_1)$, then
                              $\mu(X_i)<\mu(H_j)$ for all $i\geq 1$ and $j\geq 1$.
                   \item[b] If $\mu(X_r)\geq \mu(H_1)$, then $X_{i}$
                              is the unique  GR
                              submodule of $X_{i+1}$ for every $i\geq r$. If in addition $r>1$, then
                              $\mu(X_i)>\mu(H_j)$ for all $i>r$ and $j\geq
                              1$.
                \end{itemize}
    \item[(5)] Let $\mathbb{T}$ be a stable tube of rank $r>1$. Then there is
              a quasi-simple module $X$ on $\mathbb{T}$ such that
              $\mu(X_r)\geq \mu(H_1)$.
    \item[(6)] Let $S$ be a quasi-simple module of rank $r$ which is also a
              simple module. Then $\mu(S_r)<\mu(H_1)$, and thus
                 $\mu(S_j)<\mu(H_1)$ for all $j\geq 1$.
    \item[(7)] Let $M\in \mathcal{I}\setminus\mathcal{T}$ and $N$  be
a  GR submodule of $M$. Thus  $N\cong X_i$ for some quasi-simple module $X$ by (1)d.  Then $\mu(M)>
\mu(X_j)$ for all $j\geq 1$.
\end{itemize}
\end{prop}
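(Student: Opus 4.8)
We sketch a proof of item (7), taking items (1)--(6), the notation above, and the standard structure of the stable tubes for granted.

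\textit{Reductions.} By (1)d we may write $N\cong X_i$ with $X$ quasi-simple of rank $r$ and $i\ge1$; since $N\subset M$ is a GR inclusion, $\mu(M)=\mu(X_i)\cup\{|M|\}$, and $X_i\subsetneq M$ forces $|M|>|X_i|=\max\mu(X_i)$, so $\max\mu(M)=|M|$. For $j\le i$ the chain $X_j\subseteq X_i\subsetneq M$ gives $\mu(X_j)\le\mu(X_i)<\mu(M)$, so only $j>i$ needs attention. The one geometric input is a length estimate: the only irreducible monomorphism starting at $X_i$ inside its tube is $X_i\hookrightarrow X_{i+1}$, and by Proposition \ref{epi}(2) it is followed by an epimorphism $X_{i+1}\twoheadrightarrow M$; since $M$ is preinjective it is not regular, so this epimorphism is not an isomorphism and $|M|<|X_{i+1}|\le|X_j|$ for every $j>i$.

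\textit{An order lemma and an induction.} From the definition of the order on $\mathcal P(\mathbb N)$ one has: if $B<A$ and $m>\max A$, then $B\cup\{m\}<A$ (the least element $d$ of $B\triangle A$ lies in $A$, hence $d\le\max A<m$, and $d$ remains the least element of $(B\cup\{m\})\triangle A$). I would prove $\mu(X_j)<\mu(M)$ by induction on $j\ge i$, the base $j=i$ being $\mu(M)=\mu(X_i)\cup\{|M|\}>\mu(X_i)$. For $j>i$ choose a GR submodule $T_j$ of $X_j$, so $\mu(X_j)=\mu(T_j)\cup\{|X_j|\}$ with $|X_j|>|M|=\max\mu(M)$; by the order lemma it suffices that $\mu(T_j)<\mu(M)$. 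By (1)c either $T_j\cong X_{j-1}$, and then $\mu(T_j)=\mu(X_{j-1})<\mu(M)$ by the inductive hypothesis, or $T_j$ is preprojective, and then $\mu(T_j)<\mu(H_1)$ by (2), so it is enough to know that $\mu(M)\ge\mu(H_1)$.

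\textit{Isolating the one real assertion.} Parts (4)a--(4)b organise the remaining cases. If $\mu(X_r)\ge\mu(H_1)$ and $i\ge r$, then by (4)b $X_{j-1}$ is the only GR submodule of $X_j$ for all $j>i$, so the preprojective alternative never occurs and the induction closes unconditionally; concretely $\mu(X_j)=\mu(X_i)\cup\{|X_{i+1}|,\dots,|X_j|\}$, which is beaten by $\mu(M)=\mu(X_i)\cup\{|M|\}$ since $|M|<|X_{i+1}|$. If $\mu(X_r)<\mu(H_1)$, then by (4)a every $\mu(X_j)<\mu(H_1)$, so again the conclusion follows from $\mu(M)\ge\mu(H_1)$. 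In the one remaining case, $\mu(X_r)\ge\mu(H_1)$ with $i<r$, one has $|M|<|X_{i+1}|\le|X_r|=|\delta|$, so $\udim M\not>\delta$; this too is subsumed by the single statement
\[\mu(M)\ge\mu(H_1)\qquad\text{for every }M\in\mathcal I\setminus\mathcal T.\]

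\textit{The main obstacle.} Everything thus comes down to this inequality, and it is here that I expect the real work to sit. For $\udim M>\delta$ it is precisely (3). The delicate case is $\udim M\not>\delta$: since modules of dimension vector $\delta$ are regular, $\udim M$ then has a zero entry, $M$ is an interval module on a proper connected subquiver, and $|M|<|\delta|$. Such small preinjective modules need not be take-off (this already happens for $\widetilde{\mathbb A}_3$), so they cannot simply be discarded, and one must compare their measures with $\mu(H_1)$ by hand. Here I would use the explicit combinatorics from \cite{Ch3}: compute $\mu(H_1)$ from the submodule lattice of the band module $H_1$ (whose indecomposable submodules are the interval modules on the down-closed subpaths of $Q$), and for each of the finitely many non--take-off preinjective $M$ with $|M|<|\delta|$ show, by a submodule-lattice comparison, that $\mu(M)$ already overtakes $\mu(H_1)$ at some level $\le|M|$. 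Granting this case analysis — and, underneath it, the decision of which small indecomposables are take-off — the first three steps assemble the proof.
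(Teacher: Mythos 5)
The paper does not actually prove Proposition \ref{bigprop}: it is presented as a list of known facts with the proofs cited from \cite[Section 3]{Ch3}, so there is nothing to match your argument against; I judge it on its own. You also address only item (7), taking (1)--(6) as given, whereas the statement comprises all seven items. Within (7), your set-up is sound: the identification $N\cong X_i$, the bound $|M|<|X_{i+1}|$ obtained from Proposition \ref{epi}(2) (the only irreducible monomorphism out of $X_i$ stays in the tube), the order lemma, and the induction along quasi-length via (1)c are all correct, and the case $\mu(X_r)\geq\mu(H_1)$, $i\geq r$ is complete. But the proof is not finished: you reduce every remaining case to the claim that $\mu(M)\geq\mu(H_1)$ for all $M\in\mathcal{I}\setminus\mathcal{T}$, and exactly the critical subcase (non-sincere $M$ with $|M|<|\delta|$) is left as an unexecuted ``submodule-lattice comparison'' which, as you admit, presupposes deciding which small indecomposables are take-off. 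That deferred step is the actual content, so as written there is a genuine gap; it is also not clear that the detour through $\mu(H_1)$ is the right reduction at all, since this auxiliary inequality is never verified and is not obviously easier than (7) itself.

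The missing idea is to use the hypothesis $M\notin\mathcal{T}$ directly instead of comparing with $\mu(H_1)$. By the take-off/central/landing partition recalled in the introduction, the take-off measures form an initial segment of the totally ordered set of GR measures: any GR measure that is $\leq$ some take-off measure is itself a take-off measure. Now run your induction (or a minimal counterexample argument): for $j>i$ let $T_j$ be a GR submodule of $X_j$, so $\mu(X_j)=\mu(T_j)\cup\{|X_j|\}$ with $|X_j|\geq|X_{i+1}|>|M|=\max\mu(M)$. If $T_j\cong X_{j-1}$, the inductive hypothesis gives $\mu(T_j)<\mu(M)$. If $T_j$ is preprojective, then $\mu(T_j)$ is a take-off measure by (2); were $\mu(T_j)\geq\mu(M)$, the initial-segment property would force $\mu(M)$ to be a take-off measure, contradicting $M\in\mathcal{I}\setminus\mathcal{T}$; hence $\mu(T_j)<\mu(M)$ in this case too. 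Your order lemma then yields $\mu(X_j)<\mu(M)$, closing the induction in every case --- including $\mu(X_r)<\mu(H_1)$ and $i<r$ --- with no comparison against $\mu(H_1)$ and no classification of small take-off modules. With this replacement your first three steps do assemble into a complete proof of (7).
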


\begin{remark}
Some statements in Proposition \ref{bigprop} hold in general.
For example, the statements (2), (4) and (7) and the first argument of the statement
(3) also hold \cite{Ch4} for tame quivers of type
$\widetilde{\mathbb{D}}_n$, $\widetilde{\mathbb{E}}_6$,
$\widetilde{\mathbb{E}}_7$ and $\widetilde{\mathbb{E}}_8$.
The statement (5) is known to be true \cite{Ch5} for tame quivers which is not of the $\widetilde{\mathbb{E}}_8$ type.
\end{remark}

\begin{lemm}\label{onemap}
 Let $Q$ be a tame quiver of type
$\widetilde{\mathbb{A}}_n$.  Then for every indecomposable
preinjective module $M$, there is, in each regular component, precise
one quasi-simple module $X$ such that $\Hom(X,M)\neq 0$. In
particular, up to isomorphism, each indecomposable preinjective
module contains in each regular component at most one GR submodule.
\end{lemm}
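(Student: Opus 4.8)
The plan is to analyze $\Hom(X, M)$ for $M$ an indecomposable preinjective module and $X$ ranging over the quasi-simple modules of a fixed regular component (standard stable tube) $\mathbb{T}$ of rank $r$. First I would recall the standard description of homomorphisms from a tube to the preinjective component for hereditary algebras: since the preinjective modules have no regular subfactors and the tube $\mathbb{T}$ is standard, any nonzero map $X_i \to M$ from an indecomposable regular $X_i$ factors in a controlled way, and on the regular source side the ``ray'' structure of the tube forces $\Hom(X_i, M)$ to be governed by $\Hom$ from the quasi-simple composition factors. Concretely, I want to show that the quasi-simple modules $X^{(1)}, \dots, X^{(r)}$ sitting at the mouth of $\mathbb{T}$ satisfy $\dim \Hom(X^{(j)}, M) = \delta^{(j)}$ for some nonnegative integers, and that exactly one of these is nonzero. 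The cleanest route is a dimension count using the Euler form: for preinjective $M$ one has $\Ext^1(X^{(j)}, M) = 0$ (preinjectives are $\Ext$-injective enough against regulars going this direction — more precisely $\Ext^1(R, I) = 0$ for $R$ regular, $I$ preinjective, since $\tau^{-1}R$ is regular, not projective, and $D\Hom(I, \tau R) = \Ext^1(R, I)$ with $\Hom(I, \tau R) = 0$ as there are no maps from preinjectives to regulars). Hence $\dim \Hom(X^{(j)}, M) = \langle \udim X^{(j)}, \udim M\rangle$, the Euler form, and summing over $j = 1, \dots, r$ gives $\sum_j \dim\Hom(X^{(j)}, M) = \langle \delta, \udim M\rangle$ because $\sum_j \udim X^{(j)} = \delta$.

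Next I would compute $\langle \delta, \udim M\rangle$ for $M$ preinjective of type $\widetilde{\mathbb{A}}_n$. Writing $\udim M = m\delta + w$ with $m \geq 0$ and $w$ the (anti-)defect part, one has $\langle \delta, \delta\rangle = 0$ (since $\delta$ is a radical vector of the symmetrized form and in fact $\langle \delta, -\rangle = \langle -, \delta\rangle^{*}$ up to the Coxeter action, and for $\widetilde{\mathbb{A}}_n$ the Euler form is degenerate exactly along $\mathbb{Z}\delta$ on both sides), so $\langle \delta, \udim M\rangle = \langle \delta, w\rangle = -\langle w, \delta\rangle$, which is the defect $\partial M$. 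For a preinjective module the defect is a positive integer, and for $\widetilde{\mathbb{A}}_n$ the preinjective modules of defect $1$ are exactly the ``minimal'' ones at the mouth of the preinjective component; a short check shows that the smallest preinjective (the injective at the relevant vertex, or rather the preinjectives reachable first under $\tau^{-1}$) has defect $1$, and — here is the key point — \emph{every} indecomposable preinjective over a tame $\widetilde{\mathbb{A}}_n$-quiver has defect exactly $1$, or at least that the ones of defect $1$ are cofinal, and one reduces the general claim to these. Actually the honest claim is simpler: I would argue directly that $\langle \delta, \udim M\rangle = 1$ for \emph{every} indecomposable preinjective $M$ when $Q$ is of type $\widetilde{\mathbb{A}}_n$, by induction on the position in the preinjective component using $\udim \tau^{-1}M = \Phi^{-1}\udim M$ and $\langle \delta, \Phi^{-1}v\rangle = \langle \delta, v\rangle$ (invariance of $\delta$-pairing under the Coxeter transformation, since $\delta$ is Coxeter-fixed), together with the base case that the injectives $I(\nu)$ satisfy $\langle \delta, \udim I(\nu)\rangle = \dim_k \Hom(\oplus_j X^{(j)}, I(\nu)) = 1$ because $\delta$ has all entries $1$ and a map into the injective $I(\nu)$ is detected at the vertex $\nu$.

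Once $\sum_{j=1}^r \dim\Hom(X^{(j)}, M) = 1$ is established, the first assertion follows immediately: there is precisely one quasi-simple $X$ on $\mathbb{T}$ with $\Hom(X, M) \neq 0$ (and then $\dim\Hom(X,M) = 1$). For the ``in particular'' clause about GR submodules, I would invoke Proposition \ref{bigprop}(1)d: any GR submodule $T$ of a preinjective $M$ is regular, so $T \cong X_i$ for a quasi-simple $X$; if $\mathbb{T}$ contained two non-isomorphic GR submodules $X_i$ and $Y_j$ of $M$, then both $\Hom(X_i, M) \neq 0$ and $\Hom(Y_j, M) \neq 0$ would hold, but the unique quasi-simple with a map to $M$ in $\mathbb{T}$ is some fixed $X$, and every indecomposable regular in $\mathbb{T}$ with a nonzero map to $M$ must have $X$ among its quasi-composition factors appearing as a ``submodule along the ray'' — more precisely its quasi-socle, by the ray structure of maps out of $\mathbb{T}$, must be $X$; hence any two GR submodules in $\mathbb{T}$ have the same quasi-socle $X$, and since GR submodules are incomparable under inclusion while the modules $X_1 \subset X_2 \subset \cdots$ with fixed quasi-socle form a chain, there can be at most one. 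The main obstacle I anticipate is the bookkeeping in this last step: pinning down that a nonzero map from an indecomposable regular $X_i \in \mathbb{T}$ to $M$ forces the quasi-socle of $X_i$ to be the distinguished $X$ (rather than merely ``some quasi-simple factor with a map to $M$''), which uses that $\Hom(-, M)$ restricted to a single ray of $\mathbb{T}$ is one-dimensional and that the connecting maps $X_i \hookrightarrow X_{i+1}$ induce isomorphisms $\Hom(X_{i+1}, M) \to \Hom(X_i, M)$ once one is ``past'' the relevant $\tau$-orbit; this is where the standardness of the tube and the $\Ext^1$-vanishing are doing the real work.
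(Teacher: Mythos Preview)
Your argument is correct, but it is considerably more elaborate than the paper's. The paper proceeds in three lines: write $M=\tau^{s}I_\nu$ for an indecomposable injective $I_\nu$; then $\Hom(X,M)\neq 0$ is equivalent to $\Hom(\tau^{-s}X,I_\nu)\neq 0$, i.e.\ to $(\udim\tau^{-s}X)_\nu\neq 0$. Since the quasi-simples on a tube sum to $\delta$ and $\delta_\nu=1$, at most one (and hence exactly one) of these entries can be nonzero. That is the whole proof; the GR consequence is left implicit.

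Your route via the Euler form reaches the same destination from a different direction: you use $\Ext^1(\mathcal{R},\mathcal{I})=0$ to get $\dim\Hom(X^{(j)},M)=\langle\udim X^{(j)},\udim M\rangle$, sum to $\langle\delta,\udim M\rangle$, and then invoke Coxeter-invariance of this pairing plus the base case at injectives (where, amusingly, you end up using exactly the paper's observation that $\delta_\nu=1$). What your approach buys is generality---the identity $\sum_j\dim\Hom(X^{(j)},M)=\langle\delta,\udim M\rangle$ holds for any tame quiver and identifies the count with the defect of $M$; the special feature of $\widetilde{\mathbb{A}}_n$ is precisely that every indecomposable preinjective has defect $1$. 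The paper's argument, by contrast, avoids all Euler-form bookkeeping by going straight to the injective case via $\tau$, which is shorter but less portable.

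One small cleanup in your ``in particular'' paragraph: you write that GR submodules are ``incomparable under inclusion'', which is true but slightly oblique. The cleaner statement is that two GR submodules have the same GR measure and hence the same length, while the modules $X_1\subset X_2\subset\cdots$ with fixed quasi-socle $X$ are pairwise of distinct lengths; this immediately gives uniqueness. Your observation that any regular \emph{submodule} $Z_i\hookrightarrow M$ in $\mathbb{T}$ forces $\Hom(Z,M)\neq 0$ for its quasi-socle $Z$ (via the composite $Z\hookrightarrow Z_i\hookrightarrow M$) is exactly right and is the point the paper leaves unsaid.
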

\begin{proof} Let $M=\tau^sI_\nu$, where $I_\nu$ is an indecomposable
injective module corresponding to a vertex $\nu\in Q$. It is obvious
that there is a quasi-simple module $X$ on a given regular component such
that $\Hom(X,I_\nu)\neq 0$. Thus $\Hom(\tau^sX,M)\neq 0$. Assume
that $X$ and $Y$ are non-isomorphic quasi-simple modules on the same
tube such that $\Hom(X,M)\neq 0\neq \Hom(Y,M)$. Then
$\Hom(\tau^{-s}X,I_\nu)\neq 0\neq \Hom(\tau^{-s}Y,I_\nu)$. Thus
$(\udim\tau^{-s}X)_\nu\neq 0\neq (\udim\tau^{-s}Y)_\nu$, which is
impossible since $1=\delta_\nu\geq
(\udim\tau^{-s}X)_\nu+(\udim\tau^{-s}Y)_\nu$.
\end{proof}

\section{the number of GR submodules}\label{string}
As we have mentioned in the introduction, the number of the GR submodules of
a given indecomposable module over a representations-finite algebra
is conjectured to be bounded by three.  In this section, we will show
for a string algebra that this number is always bounded by two for any
indecomposable string module containing no band submodules.  We will also describe the numbers of the GR
submodules for different kinds of indecomposable modules over
tame quivers $Q$ of type $\widetilde{\mathbb{A}}_n$.

\subsection{String modules} We first recall what string modules are.
For details, we refer to \cite{BR}. Let $\Gamma$ be a string
algebra with underlying quiver $Q$. We denote by $s(C)$ and $e(C)$ the starting and the ending
vertices of a given string $C$, respectively.  Let
$C=c_nc_{n-1}\cdots c_2c_1$ be a string, the corresponding string
module $M(C)$ is defined as follows: let $u_i=s(c_{i+1})$ for $0\leq
i\leq n-1$ and $u_n=e(c_n)$. For a vertex $\nu\in Q$, let
$I_\nu=\{i|u_i=\nu\}\subset\{0,1,\ldots,n\}$. Then the vector space $M(C)_\nu$
associated to $\nu$  satisfies that $\dim M(C)_\nu=|I_\nu|$
and has $z_i,i\in I_\nu$ as basis.  If for  $1\leq i\leq n$, the symbol $c_i$ is an arrow $\beta$,
define $\beta(z_{i-1})=z_i$. If for
$1\leq i\leq n$, the symbol $c_i$ is an
inverse of an arrow $\beta$, define $\beta(z_i)=z_{i-1}$.
Note that the indecomposable string modules are
uniquely determined by the underlying string,  up to the equivalence
given by $C\sim C^{-1}$.

If $C=E\beta F$ is a string with $\beta$ an arrow, then the string
module $M(E)$ is a submodule of $M(C)$: let $E$ be of length $n$ and
$F$ be of length $m$. then $C$ has length $n+m+1$. If $M(C)$ is
given by  $n+m+2$ vectors $z_0,z_1,\ldots,z_{n+m+2}$, it is obvious
that the space determined by the vectors $z_0,z_1,\ldots, z_n$ defines a
submodule, which is $M(E)$. The corresponding factor module is
$M(F)$. If $C=E\beta^{-1} F$ is a string with $\beta$ an arrow, we
may obtain similarly an indecomposable submodule $M(F)$ of $M(C)$ with factor
module $M(E)$.

\subsection{A``covering" of a string module} Let
$C=c_nc_{n-1}\cdots c_2c_1$ be a string. We associate
with $C$ a Dynkin quiver $\mathbb{A}_{n+1}$ as follows: the vertices
are $u_i$, and there is an arrow $u_{i-1}\stackrel{\alpha_i}{\ra}
u_i$ if $c_i$ is an arrow, and an arrow
$u_{i}\stackrel{\alpha_i}{\ra} u_{i-1}$ in case $c_i$ is an inverse
of an arrow.  Let $M(C)$ be the string module and
$M_{\mathbb{A}}(C)$ be the unique sincere indecomposable
representation over $\mathbb{A}_{n+1}$.

Before going further, we introduce the follow lemma, which was proved in \cite{R2}.
\begin{lemm}\label{sing}
Let $M$ and $N$ be indecomposable modules over an Artin algebra $\Lambda$
and $\sing(N,M)$ be the set of all non-injective homomorphisms. If $N$ is a GR submodule of $M$,
then  $\sing(N,M)$ is a subspace of $\Hom_\Lambda(N,M)$.
\end{lemm}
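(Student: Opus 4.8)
The statement to prove is Lemma~\ref{sing}: if $N$ is a GR submodule of $M$, then $\sing(N,M)$, the set of non-injective homomorphisms $N\to M$, is a $k$-subspace (or $\mathrm{End}$-submodule, over an Artin algebra) of $\Hom_\Lambda(N,M)$. Since $\sing(N,M)$ is closed under scalar multiplication trivially (a scalar multiple of a non-injective map is non-injective, and $0$ is non-injective), the entire content is closure under addition: given $f,g\colon N\to M$ both non-injective, one must show $f+g$ is non-injective. Equivalently, one must show that if $f+g$ \emph{is} injective, then at least one of $f,g$ is injective.

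**The approach.** The plan is to argue by contradiction using the defining property of a GR inclusion together with the indecomposability of $N$. Suppose $f,g\in\Hom_\Lambda(N,M)$ with $f+g$ injective. I would consider the submodule $\ker f\subseteq N$; the key point is that $(f+g)|_{\ker f}=g|_{\ker f}$, so $g$ is injective on $\ker f$, and dually $f$ is injective on $\ker g$. Now suppose for contradiction that both $f$ and $g$ are non-injective, i.e.\ $\ker f\neq 0\neq \ker g$. The idea is to produce from this data a submodule of $M$ that is indecomposable, properly contains (an isomorphic copy of) $N$ or has GR measure strictly exceeding $\mu(N)$ while still being a proper submodule of $M$ — contradicting that $N\subset M$ is a GR inclusion, which forces every proper submodule of $M$ to have GR measure at most $\mu(N)$. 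Concretely, since $N$ is indecomposable and $f+g$ embeds $N$ into $M$, the image $(f+g)(N)\cong N$ is a submodule of $M$; one then wants to enlarge it. A natural candidate is the submodule of $M$ generated by $(f+g)(N)$ together with $f(\ker g)$ (or $g(\ker f)$): on $\ker g$ we have $(f+g)=f$, so $f(\ker g)\subseteq (f+g)(N)$ already — that particular enlargement is trivial, so the real construction must be subtler.

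**The genuine construction.** The correct move, I believe, is to look at the module $N$ together with the two maps and build a pushout-type or pullback-type object. Specifically, consider the map $(f,g)\colon N\to M\oplus M$ and the diagonal $\Delta M\subseteq M\oplus M$; the submodule $U=(f,g)(N)+\Delta M$ of $M\oplus M$, or better, the preimage in $N$ under suitable maps. Alternatively — and this is the cleanest route — since $f+g$ is injective, identify $N$ with its image $N'=(f+g)(N)\subseteq M$ and regard $f,g$ as endomorphism-like data $N'\to M$. Then I would examine the submodule $N'+f(N)$ of $M$ (note $g(N)=(f+g)(N)-f(N)$ lives inside this too, roughly). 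If $f$ is non-injective, $N'+f(N)$ is a submodule of $M$ properly containing $N'\cong N$ — unless $f(N)\subseteq N'$. The dichotomy to chase: either one gets a proper submodule $W$ of $M$ with $N\subsetneq W$, hence $\mu(W)>\mu(N)$ if $W$ is indecomposable (and one can pass to an indecomposable summand/submodule whose GR measure is still $>\mu(N)$), contradicting the GR property; or $W=M$, in which case $M$ itself is generated by a copy of $N$ and $f(N)$, and one extracts injectivity of $f$ or $g$ from a dimension/length count using $|N|=|N'|$, $|f(N)|<|N|$, $|g(N)|<|N|$ and $|M|\le |N'|+|f(N)|<2|N|$, combined with $|M|\ge |N|$ since $N$ embeds.

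**Main obstacle.** The delicate part is controlling indecomposability: enlarging $(f+g)(N)$ inside $M$ may produce a decomposable submodule, and the GR property only directly bounds the GR measure, so I must ensure the enlargement (or one of its indecomposable submodules containing the copy of $N$) genuinely has GR measure $>\mu(N)$. Here the indecomposability of $N$ is essential: any indecomposable submodule of $M$ that properly contains an indecomposable module isomorphic to $N$ automatically has GR measure $>\mu(N)$ by the definition via chains of indecomposable submodules. So the real work is to show the relevant $W$ can be taken indecomposable and proper, or else to run the length inequality. I expect the length/support bookkeeping to be the routine part and the "why is $W$ indecomposable and proper" step to be where the hypotheses $N$ indecomposable and $N\subset M$ a GR inclusion must be used most carefully — very possibly by invoking that $N$, being a GR submodule, is a \emph{sink} of the embedding in the sense of Proposition~\ref{epi}(2), i.e.\ there is an irreducible monomorphism $N\to Y$ with $Y$ indecomposable mapping onto $M$, which pins down the structure of submodules of $M$ containing copies of $N$.
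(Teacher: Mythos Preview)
The paper does not prove this lemma; it merely cites Ringel \cite{R2}. So there is no in-paper argument to compare against, and I will evaluate your outline on its own terms.

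Your set-up is right: after replacing $f,g$ by $f\circ h^{-1},\,g\circ h^{-1}$ where $h=f+g$, one may assume $N\subseteq M$ and $f+g=\iota$, and the relevant object is $U=N+f(N)\subseteq M$. But from that point your sketch has two genuine gaps.

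\medskip
\textbf{Gap 1: the case $U=M$.} Your length count yields only $|N|\le |M|<2|N|$, which is no contradiction (GR factors are typically short). What actually works is a Krull--Schmidt argument: the map $\psi\colon N\oplus N\to M$, $(a,b)\mapsto a+f(b)$, has image $U$ and kernel $\{(-f(b),b):f(b)\in N\}\cong L:=f^{-1}(N)$, and $b\mapsto(-f(b),b)$ is a \emph{split} monomorphism (project to the second summand). Hence $U\oplus L\cong N\oplus N$. If $U=M$ then $M$, being indecomposable, must be isomorphic to $N$ by Krull--Schmidt, contradicting $|M|>|N|$.

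\medskip
\textbf{Gap 2: the case $N\subsetneq U\subsetneq M$.} You correctly flag indecomposability of $U$ as the obstacle, but your proposed fix via Proposition~\ref{epi}(2) does not address it. The missing tool is the local endomorphism ring of $N$ together with either (a) Ringel's Main Property (if $N\subseteq U$ with $N$ indecomposable and $\mu(N)=\mu(U)$ then $N$ is a direct summand of $U$), or (b) the same Krull--Schmidt identity $U\oplus L\cong N\oplus N$ above. With (b): every indecomposable summand of $U$ is isomorphic to $N$; if $U\cong N^2$ then $L=0$, whence $\ker f\subseteq L=0$ and $f$ is injective; if $U\cong N$ then $U=N$, so $f(N)\subseteq N$ and $g(N)\subseteq N$, and since $\End(N)$ is local one of $f,\,g=\mathrm{id}_N-f$ is a unit, hence injective. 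With (a): write $U=N\oplus U'$, let $\pi\colon U\to N$ be the projection, and note $\pi f+\pi g=\pi\iota=\mathrm{id}_N$; again locality of $\End(N)$ forces one of $\pi f,\pi g$ to be an automorphism, so $f$ or $g$ is injective.

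\medskip
In short, the decisive ingredients you are missing are (i) the locality of $\End(N)$ and (ii) a cancellation/Krull--Schmidt step (equivalently Ringel's Main Property). The enlargement $U=N+f(N)$ is the right object, but the goal is not to show $\mu(U)>\mu(N)$; rather one shows that $U$ decomposes into copies of $N$, and then the endomorphism-ring argument finishes.
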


From now on, we assume that $M(C)$ is a string module over $\Gamma$
determined by a string $C$ such that $M(C)$ contains no band submodules.
Thus any submodule of $M$ is a string module.

\begin{lemm}\label{basis} Let $N$ be a GR submodule of $M(C)$.
Then there is a substring $C'$ of $C$, such that
the submodule $X$ of $M(C)$ determined by $C'$ is isomorphic to $N$.
\end{lemm}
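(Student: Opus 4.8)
The plan is to exploit Lemma \ref{sing} together with the combinatorial structure of submodules of a string module. Since $M(C)$ contains no band submodules, every submodule of $M(C)$ is again a string module, in fact a direct sum of string modules attached to substrings of $C$; in particular the indecomposable $N$ must be isomorphic to $M(D)$ for some string $D$. The point is that a priori $N=M(D)$ could sit inside $M(C)$ via a monomorphism whose image is \emph{not} the obvious submodule $X$ cut out by a substring $C'$ of $C$ — the embedding could be a nontrivial linear combination of several "substring-type" embeddings. I would first enumerate, for a fixed string $D$, all the substrings $C'$ of $C$ with $M(C')\cong M(D)$; each such $C'$ gives a canonical monomorphism $\varphi_{C'}\colon M(D)\hookrightarrow M(C)$, namely the composite $M(D)\xrightarrow{\sim} X=M(C')\hookrightarrow M(C)$ where the last map is the inclusion of the substring submodule described in the preliminaries. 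I claim every homomorphism $M(D)\to M(C)$ is, up to scalars, a sum of maps of the form $\varphi_{C'}$ (for embeddings) together with genuinely non-injective "graph maps" $\psi$; this is the standard basis of $\Hom$ between string modules over a string algebra (Crawley-Boevey / Krause), and I would cite it.

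Next, suppose for contradiction that no substring embedding is available, i.e.\ that the GR inclusion $\iota\colon N\hookrightarrow M(C)$ is injective but is not equivalent (after composing with an automorphism of $N$ or of $M(C)$) to any single $\varphi_{C'}$. Write $\iota=\sum_j \lambda_j \varphi_{C_j'} + \sum_k \mu_k \psi_k$ in the graph-map basis, where the $\varphi_{C_j'}$ are the substring embeddings and the $\psi_k$ are the remaining (necessarily non-injective) basis maps. Since $\iota$ is injective, at least one $\lambda_j\neq 0$; if exactly one $\lambda_j$ is nonzero then $\iota - \mu_k$-terms $= \lambda_j\varphi_{C_j'}$, and I would argue (using that the $\psi_k$ are non-injective and, by Lemma \ref{sing}, that the set $\sing(N,M(C))$ of non-injective maps is a \emph{subspace}) that subtracting the $\psi_k$-part from $\iota$ stays injective, so in fact $\iota$ differs from $\lambda_j\varphi_{C_j'}$ by an element of $\sing(N,M(C))$ — and then the image of $\iota$ and the image of $\varphi_{C_j'}$, call it $X$, are submodules of the same dimension vector; a short argument (both are string submodules with the same "shape" dictated by $C_j'$) forces $\iota(N)=X$, so we are done. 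So the real work is the case where \emph{at least two} of the $\lambda_j$ are nonzero.

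The main obstacle, then, is ruling out a "diagonal" embedding using two or more distinct substrings simultaneously. Here is how I would close it. If $\lambda_{j_1},\lambda_{j_2}\neq 0$ with $C_{j_1}'\neq C_{j_2}'$, consider $\iota$ and $\varphi_{C_{j_1}'}$: both are injective, hence both lie outside $\sing(N,M(C))$. By Lemma \ref{sing}, $\sing(N,M(C))$ is a subspace, so the set of non-injective maps is closed under addition, and therefore \emph{injective} maps cannot be written as a sum of two non-injective maps, but more usefully: the complement of a proper subspace is not closed under the relevant linear operations, and one shows that generically a combination $\lambda_{j_1}\varphi_{C_{j_1}'} + \lambda_{j_2}\varphi_{C_{j_2}'}$, after adjusting by the $\psi_k$, would fail to have image a submodule isomorphic to $N$ unless the two substrings overlap in a very constrained way. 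Concretely, I would look at the "top" of $M(D)$: pick a vertex $u$ of $D$ on which $M(D)$ attains a one-dimensional peak (a source of the underlying $\mathbb{A}$-orientation of $D$); the image under $\iota$ of the corresponding basis vector is $\lambda_{j_1} z + \lambda_{j_2} z' + \cdots$ for basis vectors $z,z'$ of $M(C)$ at the same vertex; chasing the arrows of $D$ adjacent to $u$ and using that $M(C)$ is a string module (so each arrow acts by at most one "shift"), the relations satisfied by $z$ inside $X=M(C_{j_1}')$ and by $z'$ inside $M(C_{j_2}')$ are incompatible unless the two substrings coincide locally, and propagating this along $D$ (which is connected, since $N$ is indecomposable) forces $C_{j_1}'=C_{j_2}'$, a contradiction. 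Hence only one $\lambda_j$ survives and the previous paragraph finishes the proof. I expect the overlap/propagation bookkeeping to be the only genuinely delicate part; everything else is the standard string-module $\Hom$-basis plus Lemma \ref{sing}.
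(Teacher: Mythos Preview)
Your opening move is exactly the paper's: write the GR inclusion $\iota$ in the Crawley-Boevey graph-map basis and use Lemma~\ref{sing}. The problem is that you then work far too hard, because you have misread what the lemma asks for. The statement only requires a substring $C'$ of $C$ whose associated submodule $X$ is \emph{isomorphic} to $N$; it does \emph{not} require $\iota(N)=X$. Hence you are finished at the line ``Since $\iota$ is injective, at least one $\lambda_j\neq 0$'': that very $\varphi_{C_j'}$ is an isomorphism $N\xrightarrow{\sim} M(C_j')=X$, and $X$ is the substring submodule determined by $C'=C_j'$. (The justification for that line, which you should make explicit, is Lemma~\ref{sing}: if every $\lambda_j$ vanished then $\iota$ would be a linear combination of the non-injective $\psi_k$, hence lie in the subspace $\sing(N,M(C))$, contradicting injectivity.) This three-line argument is precisely the paper's proof.

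Everything after that point in your proposal---the single-$\lambda_j$ case, the multi-$\lambda_j$ ``diagonal'' analysis, the propagation argument---is aimed at the stronger conclusion $\iota(N)=X$, which is neither needed nor true in general. For a concrete counterexample, take a vertex $v$ that occurs as a local source at two distinct positions $i\neq j$ of $C$; then $S(v)$ is a substring submodule at each position, and the injective map sending a generator to $z_i+z_j$ has image $k(z_i+z_j)$, which is not any substring submodule of $M(C)$. So your propagation argument in the multi-$\lambda_j$ case would have to break down, and indeed as written it is too vague to carry weight. Drop that entire case analysis and stop once a single injective basis element has been produced.
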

\begin{proof}
Let $f: N\subset M(C)$ be the inclusion. Then $f$ is a linear combination of a basis described
in \cite{CB}, say $f_1,\ldots, f_t$. Since by Lemma \ref{sing} $\sing(N,M)$ is a subspace ,  there is an $1\leq i\leq t$ such
that $f_i$ is a monomorphism.  By the description of the basis,
$X=\im f_i$ is a submodule of $M$ determined
by a substring $C'$ of $C$. It is obvious that $N\cong X$.
\end{proof}

By this lemma, we may assume without loss of generality that a GR submodule $N$ of $M(C)$ is always
given by a substring of $C$.
Thus we may obtain in an obvious way an indecomposable  submodule $\mathcal{G}(N)$
of $M_\mathbb{A}(C)$ using the construction of the quiver $\mathbb{A}_{n+1}$.

\begin{remark}
Note that different monomorphisms $f_i$ in the basis in the proof of Lemma \ref{basis} may
give rise to different $\mathcal{G}(N)$, which are indecomposable and pairwise non-isomorphic.
They all have
the same length $|N|$ by the construction.
\end{remark}

More generally, let $N_1\subset N_2\subset \ldots N_s=N\subset N_{s+1}=M(C)$ be a GR filtration of $M(C)$.
By above discussion, we may assume that there is a sequence of substrings
$C_1\subset C_2\subset \ldots \subset C_s\subset C$ such that $N_i$
is determined by the substring $C_i$.
Thus we may define
$\mathcal{G}(N_i)$  using the construction of the quiver $\mathbb{A}_{n+1}$.
Therefore, we get inclusions of indecomposable modules over the quiver
$\mathbb{A}_{n+1}$:  $\mathcal{G}(N_1)\subset \mathcal{G}(N_2)\subset \ldots\subset \mathcal{G}(N_s)\subset M_\mathbb{A}(C)$.
Conversely,
if $T$ is a submodule of $M_\mathbb{A}(C)$ with inclusion map $f$, then there is a natural way to get a string submodule
$\mathcal{F}(T)$ of $M(C)$. We may also denote the inclusion by $f:\mathcal{F}(T)\subset M$. Then
under the inclusion, $\mathcal{G}\mathcal{F}(T)=T$. It is easily seen that
$\mathcal{F}(\mathcal{G}(N)\cong N$.
Note that $\mathcal{F}$ preserves indecomposables, monomorphisms and lengths. In particular,
$\mu(T)\leq \mu(F(T))$.

\begin{lemm}\label{propF}  We keep the notations as above.
  \begin{itemize}
    \item[(1)] For each $i$, $\mathcal{G}(N_i)$ is a GR submodule of $\mathcal{G}(N_{i+1})$ and $\mu(\mathcal{G}(N_i))=\mu(N_i)$.
    \item[(2)]  If $T$ is a GR submodule of $M_\mathbb{A}(C)$, then $\mathcal{F}(T)$ is a GR submodule of
                $M$.
  \end{itemize}
\end{lemm}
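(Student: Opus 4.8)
The plan is to exploit the length-preserving, monomorphism-preserving correspondence between string submodules of $M = M(C)$ and submodules of the Dynkin representation $M_\mathbb{A}(C)$, together with the inequality $\mu(T) \leq \mu(\mathcal{F}(T))$ already observed. Both parts of the lemma are of the same flavour: one transports a GR inclusion from the string-module side to the $\mathbb{A}_{n+1}$-side in (1), and back in (2). The crucial structural fact is that $\mathcal{G}$ and $\mathcal{F}$ are mutually inverse on the relevant indecomposables (up to isomorphism), preserve lengths, and turn inclusions into inclusions; I will use this dictionary repeatedly.

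\textbf{Proof of (2).} Let $T$ be a GR submodule of $M_\mathbb{A}(C)$, so $\mu(M_\mathbb{A}(C)) = \mu(T) \cup \{|M_\mathbb{A}(C)|\}$, and note $|M_\mathbb{A}(C)| = |M|$ since $\mathcal{F}(M_\mathbb{A}(C)) = M$ and $\mathcal{F}$ preserves lengths. First I must check $\mathcal{F}(T)$ is indecomposable — this is exactly the assertion that $\mathcal{F}$ preserves indecomposables. Now take any indecomposable submodule $N' \subset M$; by Lemma~\ref{basis} applied to a GR submodule inside $N'$ (or directly to $N'$ via the same basis argument), I may assume $N'$ is given by a substring, hence $N' = \mathcal{F}(\mathcal{G}(N'))$ with $\mathcal{G}(N') \subset M_\mathbb{A}(C)$ indecomposable. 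Since $T$ is a GR submodule of $M_\mathbb{A}(C)$, we get $\mu(\mathcal{G}(N')) \leq \mu(T)$, and then $\mu(N') = \mu(\mathcal{F}(\mathcal{G}(N'))) \geq \mu(\mathcal{G}(N'))$ is the wrong direction — so here is the delicate point: I actually need $\mu(\mathcal{G}(N')) = \mu(N')$ for string submodules $N'$, not merely $\leq$. This should follow because $\mathcal{G}$ sends a GR \emph{filtration} of $N'$ by substrings to a chain of indecomposable submodules of $\mathcal{G}(N')$ of the same lengths, giving $\mu(N') \leq \mu(\mathcal{G}(N'))$, while $\mathcal{F}$ gives the reverse; combined with $\mathcal{F}\mathcal{G} \cong \mathrm{id}$ this forces equality. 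Granting that, $\mu(N') = \mu(\mathcal{G}(N')) \leq \mu(T) = \mu(\mathcal{F}(T))$ for all indecomposable $N' \subset M$ (reducing to substring submodules as above), and $|\mathcal{F}(T)| = |T| < |M|$, so $\mathcal{F}(T)$ is a GR submodule of $M$.

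\textbf{Proof of (1).} This is the mirror image. Fix $i$ and consider $N_i \subset N_{i+1}$, a GR inclusion of string modules given by substrings $C_i \subset C_{i+1}$; applying $\mathcal{G}$ gives an inclusion $\mathcal{G}(N_i) \subset \mathcal{G}(N_{i+1})$ of indecomposables over $\mathbb{A}_{n+1}$ with $|\mathcal{G}(N_i)| = |N_i|$. By the equality $\mu(\mathcal{G}(N)) = \mu(N)$ for string submodules established above, $\mu(\mathcal{G}(N_i)) = \mu(N_i)$, which is the second assertion. For the first: any indecomposable submodule of $\mathcal{G}(N_{i+1})$ is of the form $\mathcal{G}(N')$ for a substring submodule $N' \subset N_{i+1}$ (using $\mathcal{F}\mathcal{G} \cong \mathrm{id}$ and that $\mathcal{F}$ of a submodule of $\mathcal{G}(N_{i+1})$ lands inside $N_{i+1}$), and then $\mu(\mathcal{G}(N')) = \mu(N') \leq \mu(N_i)$ because $N_i$ is a GR submodule of $N_{i+1}$; since $|\mathcal{G}(N_i)| = |N_i| < |N_{i+1}| = |\mathcal{G}(N_{i+1})|$, this exhibits $\mathcal{G}(N_i)$ as a GR submodule of $\mathcal{G}(N_{i+1})$.

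\textbf{Main obstacle.} The routine transport of inclusions and lengths is not where the difficulty lies; the heart of the matter is pinning down the identity $\mu(\mathcal{G}(N)) = \mu(N)$ (equivalently $\mu(\mathcal{F}(T)) = \mu(T)$ on the nose, not just $\leq$) for the relevant string submodules, together with the claim that \emph{every} indecomposable submodule of $\mathcal{G}(N_{i+1})$ arises as $\mathcal{G}$ of a substring submodule — i.e. that the correspondence is a bijection on indecomposable submodules, not just a pair of maps. I expect this to follow from the explicit combinatorics of the $\mathbb{A}_{n+1}$-quiver construction (a substring of $C$ corresponds to a connected subquiver, and connected subquivers of a type-$\mathbb{A}$ quiver exhaust the indecomposable submodules), but it is the step that needs care, and it is where one must be sure no band submodules sneak in — which is exactly why the standing hypothesis that $M(C)$ contains no band submodules is in force.
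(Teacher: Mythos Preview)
Your argument is correct but organized differently from the paper's. You aim to establish the identity $\mu(\mathcal{G}(N')) = \mu(N')$ once and for all, for arbitrary substring submodules $N'$ (by transporting a GR filtration of $N'$ forward via $\mathcal{G}$ and arbitrary chains backward via $\mathcal{F}$), and then deduce both (1) and (2) from this together with the observation that every indecomposable submodule on the $\mathbb{A}$-side is of the form $\mathcal{G}(N')$. The paper instead proves (1) first by a short induction along the \emph{fixed} GR filtration $N_1 \subset \cdots \subset N_s \subset M(C)$: the base case is that $N_1$ and $\mathcal{G}(N_1)$ are both simple; in the inductive step one takes an arbitrary GR submodule $X$ of $\mathcal{G}(N_{i+1})$ and squeezes
\[
\mu(\mathcal{G}(N_i)) \leq \mu(X) \leq \mu(\mathcal{F}(X)) \leq \mu(N_i) = \mu(\mathcal{G}(N_i)),
\]
using only the already-noted one-way inequality $\mu(T) \leq \mu(\mathcal{F}(T))$ and the induction hypothesis. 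Part (2) then follows in a single line from (1): since $\mathcal{G}(N)$ is a GR submodule of $M_{\mathbb{A}}(C)$, one has $\mu(T) = \mu(\mathcal{G}(N)) = \mu(N) \geq \mu(\mathcal{F}(T)) \geq \mu(T)$. The paper's route therefore avoids both your general measure-equality claim and the surjectivity of $\mathcal{G}$ on indecomposable submodules; your route is valid (the facts you flag as the ``main obstacle'' do hold, for exactly the combinatorial reasons you give), but it requires more auxiliary verification and reverses the natural dependence of (2) on (1). One small imprecision worth noting: in your proof of (2) you invoke Lemma~\ref{basis} for an arbitrary indecomposable $N' \subset M$, whereas that lemma as stated covers only GR submodules (its proof uses that $\sing(N,M)$ is a subspace, which is specific to GR submodules). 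This does no harm, since to show $\mathcal{F}(T)$ is a GR submodule it suffices to compare $\mu(\mathcal{F}(T))$ with the GR submodule $N$ alone, and there the lemma applies directly.
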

\begin{proof}
(1) We use induction.
Assume first that $i=1$ and that $\mathcal{G}(N_1)$ is not a GR submodule of $\mathcal{G}(N_2)$.
Let $X\subset \mathcal{G}(N_2)$ be a GR submodule and thus $\mathcal{F}(X)$ is isomorphic to a submodule of $N_2$. Since $\mathcal{F}$ preserves monomorphisms,
$\mu(N_1)\geq \mu(\mathcal{F}(X))\geq \mu(X)>\mu(\mathcal{G}(N_1)$. This is impossible since $N_1$ and $\mathcal{G}(N_1)$ are both simple modules.   Thus $\mathcal{G}(N_1)$ is a GR submodule of $\mathcal{G}(N_{2})$.
It follows $\mu(\mathcal{G}(N_2))=\mu(N_2)$ since $|\mathcal{G}(N_2)|=|N_{2}|$.

Now assume that $\mu(\mathcal{G}(N_i))=\mu(N_i)$.  Let $X$ be a GR submodule of $\mathcal{G}(N_{i+1})$.
Then $\mu(\mathcal{G}(N_i))\leq\mu(X)\leq \mu(F(X))\leq \mu(N_i)$. Therefore, $\mu(\mathcal{G}(N_i))=\mu(X)$
by induction and $\mathcal{G}(N_i)$ is a GR submodule of $\mathcal{G}(N_{i+1})$. Hence
$\mu(\mathcal{G}(N_{i+1}))=\mu(N_{i+1})$ since $|\mathcal{G}(N_{i+1})|=|N_{i+1}|$.

(2) Since  $N$ is a GR submodule of $M(C)$, $\mu(T)=\mu(\mathcal{G}(N))=\mu(N)\geq\mu(\mathcal{F}(T))\geq \mu(T)$.
Thus $\mu(N)=\mu(\mathcal{F}(T))$ and $\mathcal{F}(T)$ is a GR submodule of $M(C)$.
\end{proof}

For a Dynkin quiver of type $\mathbb{A}$, we have shown in
\cite{Ch1} the following result:
\begin{prop} Let $Q$ be a Dynkin quiver of type $\mathbb{A}$. Then each
indecomposable module  has at most two GR submodules and each factor
of a GR inclusion is a uniserial module.
\end{prop}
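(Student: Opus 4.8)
The plan is to use the classical description of the indecomposable $kQ$-modules, $Q$ of type $\mathbb{A}$, as \emph{interval modules} (these are exactly the string modules here). Fixing a labelling $1,2,\ldots,N$ of the vertices along the line, every indecomposable is, up to isomorphism, a module $M[a,b]$ whose dimension vector is supported on an interval $[a,b]$, with all structure maps along the interval isomorphisms; I write $\epsilon_i\in\{\to,\leftarrow\}$ for the orientation of the arrow joining $i$ and $i+1$. A subrepresentation of $M[a,b]$ is a family of coordinate subspaces whose support is closed under the arrow action, so an \emph{indecomposable} submodule is precisely a closed subinterval $[c,d]\subseteq[a,b]$, i.e. one whose boundary arrows point inward ($\epsilon_{c-1}=\to$ when $c>a$, and $\epsilon_d=\leftarrow$ when $d<b$). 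I would run the whole argument in this combinatorial model.

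First I would record a monotonicity lemma: for indecomposable modules $T\subsetneq N$ one has $\mu(T)<\mu(N)$. This is immediate from the original definition of $\mu$, since a chain of indecomposable submodules realizing $\mu(T)$ lies inside $N$, and appending the indecomposable $N$ produces a chain whose length-set is $\mu(T)\cup\{|N|\}$; as $|N|>|T|$, this set exceeds $\mu(T)$, whence $\mu(N)\ge\mu(T)\cup\{|N|\}>\mu(T)$. Consequently a GR submodule of $M=M[a,b]$, being an indecomposable proper submodule of maximal Gabriel-Roiter measure, must be \emph{maximal} among proper indecomposable submodules with respect to inclusion: any strictly larger indecomposable proper submodule would have strictly larger $\mu$. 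It therefore suffices to classify the maximal proper indecomposable submodules and to compute their factor modules.

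The key step is the claim that $M[a,b]$ has at most two maximal proper indecomposable submodules. I would prove it by showing that every proper closed subinterval is contained in one of two distinguished ones. If $[c,d]$ is proper, then $c>a$ or $d<b$. In the first case closedness forces $\epsilon_{c-1}=\to$, so $[c,b]$ is again closed and contains $[c,d]$; enlarging to the left as far as possible lands in $[c_L,b]$, where $c_L$ is the least index $>a$ with $\epsilon_{c_L-1}=\to$. Symmetrically, when $d<b$ one obtains containment in $[a,d_R]$, where $d_R$ is the largest index $<b$ with $\epsilon_{d_R}=\leftarrow$. Thus the only candidates for maximal proper indecomposable submodules are $[c_L,b]$ and $[a,d_R]$, giving the bound two. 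For the factors, minimality of $c_L$ forces $\epsilon_a=\cdots=\epsilon_{c_L-2}=\leftarrow$, so the quotient $M/M[c_L,b]$, supported on $[a,c_L-1]$ with this equioriented orientation, is uniserial; the factor $M/M[a,d_R]$ is uniserial for the same reason. Combined with the reduction of the previous paragraph, this yields both assertions: at most two GR submodules, each with uniserial factor.

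The main obstacle is exactly this combinatorial classification. The naive idea of enlarging a submodule one vertex at a time is treacherous: an interior source blocks single-step enlargement yet can be ``jumped over'' once it is flanked on both sides, and distinguishing interior sources from endpoint sources is precisely where a hands-on analysis goes astray. The clean way around this, which I would emphasize, is to avoid step-by-step enlargement and instead directly exhibit the two one-sided maximal extensions $[c_L,b]$ and $[a,d_R]$ and verify that every proper indecomposable submodule sits inside one of them.
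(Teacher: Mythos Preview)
Your argument is correct, but note that the paper does not itself prove this proposition: it is quoted from \cite{Ch1} and used as a black box for the subsequent results on string modules, so there is no in-paper proof to compare against directly.

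As for the substance of your argument: the reduction from ``GR submodule'' to ``maximal proper indecomposable submodule'' via the strict monotonicity $\mu(T)<\mu(N)$ for indecomposable $T\subsetneq N$ is valid (this is the well-known property that GR measures increase along chains of indecomposables). Your identification of the at most two maximal closed subintervals $[c_L,b]$ and $[a,d_R]$ is clean and correct; since each $M[a,b]$ has one-dimensional vector spaces at every vertex, every submodule is determined by its support, so indecomposable submodules really are exactly the closed subintervals. The minimality of $c_L$ indeed forces $\epsilon_a=\cdots=\epsilon_{c_L-2}=\leftarrow$, so the factor supported on $[a,c_L-1]$ is equioriented and hence uniserial, and symmetrically for the other side. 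You might add a word on the degenerate cases where $c_L$ or $d_R$ fails to exist (equioriented $M$, or $a=b$), but these only make the bound sharper. This self-contained combinatorial route is essentially the argument one would expect in \cite{Ch1} specialized to type~$\mathbb{A}$, and it connects naturally with the paper's covering construction $M(C)\mapsto M_{\mathbb{A}}(C)$, which translates a string module back to exactly such an interval module.
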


As a consequence of this proposition and Lemma \ref{propF},  we have
\begin{theo} Let $\Lambda$ be a string algebra and  $M(C)$ be a string module
containing no band submodules. Then $M(C)$ contains, up to
isomorphism, at most two GR submodules and the factors of the GR
inclusions are uniserial modules.
\end{theo}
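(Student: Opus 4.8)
The plan is to deduce the theorem from the preceding proposition on Dynkin quivers of type $\mathbb{A}$, transporting Gabriel--Roiter data between $M(C)$ and its covering $M_\mathbb{A}(C)$ along the maps $\mathcal{G}$ and $\mathcal{F}$, with Lemma~\ref{propF} tracking GR inclusions. The first step is to show that, up to isomorphism, every GR submodule of $M(C)$ is of the form $\mathcal{F}(T)$ for some GR submodule $T$ of $M_\mathbb{A}(C)$. Given a GR submodule $N$ of $M(C)$, Lemma~\ref{basis} lets me assume $N$ is cut out by a substring of $C$, so I fix a GR filtration of $M(C)$ through $N$ refined by substrings as in the discussion preceding Lemma~\ref{propF}; Lemma~\ref{propF}(1) applied to the top step then gives that $\mathcal{G}(N)$ is a GR submodule of $\mathcal{G}(M(C))=M_\mathbb{A}(C)$, while by construction $N\cong\mathcal{F}(\mathcal{G}(N))$. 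Combined with Lemma~\ref{propF}(2), this identifies the set of isomorphism classes of GR submodules of $M(C)$ with $\{\,[\mathcal{F}(T)]\ :\ T\text{ a GR submodule of }M_\mathbb{A}(C)\,\}$.

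Since $\mathbb{A}_{n+1}$ is a linear quiver, every indecomposable submodule of the sincere module $M_\mathbb{A}(C)$ is the interval module on a subinterval of the line, and $\mathcal{F}$ of such a submodule depends only on that interval; hence $\mathcal{F}$ is well defined on isomorphism classes of indecomposable (in particular, GR) submodules of $M_\mathbb{A}(C)$. As $M_\mathbb{A}(C)$ has at most two GR submodules up to isomorphism by the cited proposition, the displayed set has at most two elements, and so does the set of isomorphism classes of GR submodules of $M(C)$. For the uniserial claim, take a GR submodule $N$ of $M(C)$ occupying positions $\{a,\dots,b\}$, so $\mathcal{G}(N)\subset M_\mathbb{A}(C)$ is a GR inclusion and $M_\mathbb{A}(C)/\mathcal{G}(N)$ is uniserial, hence indecomposable, by the proposition. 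Its support is the complement $\{0,\dots,a-1\}\cup\{b+1,\dots,n\}$, two disjoint segments of the line, so the quotient is the direct sum of the two corresponding interval modules; indecomposability forces $a=0$ or $b=n$. Taking $b=n$ (the case $a=0$ is symmetric, or reduces to it via $C\sim C^{-1}$), the quotient is the interval module on $\{0,\dots,a-1\}$, which is uniserial precisely when that subquiver of $\mathbb{A}_{n+1}$ is linearly oriented, i.e.\ when $c_1,\dots,c_{a-1}$ are all direct letters or all inverse letters; and under $b=n$ the factor $M(C)/N$ is exactly the string module on $c_{a-1}\cdots c_1$, uniserial under the same condition. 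Hence $M(C)/N$ is uniserial.

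Most of this is bookkeeping, and the elementary inputs I would simply quote: that a substring GR submodule occupies a subinterval of positions, that the quotient of the sincere $\mathbb{A}_{n+1}$-module by an interval module splits as stated, and that a string (resp.\ interval) module is uniserial iff its string (resp.\ the subquiver) is monotone. The one point that genuinely needs care is the asymmetry flagged in the Remark after Lemma~\ref{basis}: the assignment $\mathcal{G}$ is \emph{not} well defined on isomorphism classes, since a single $N$ may have several pairwise non-isomorphic companions $\mathcal{G}(N)$, whereas $\mathcal{F}$ \emph{is} well defined on them. This is exactly why I run the count by pulling GR submodules of $M(C)$ back from those of $M_\mathbb{A}(C)$ through $\mathcal{F}$ rather than pushing them forward through $\mathcal{G}$; once organized that way, the bound is immediate.
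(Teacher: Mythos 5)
Your argument is correct and follows essentially the paper's own route: the paper deduces this theorem directly from the type $\mathbb{A}$ proposition together with Lemma \ref{propF} (via Lemma \ref{basis} and the correspondence $\mathcal{F}$, $\mathcal{G}$), which is exactly the deduction you carry out. Your write-up simply makes explicit the bookkeeping the paper leaves to the reader, namely pulling GR submodules of $M(C)$ back through $\mathcal{F}$ rather than pushing forward through $\mathcal{G}$, and the end-segment/monotonicity analysis identifying the GR factors as uniserial string modules.
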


\begin{coro} If $\Lambda$ is a representation-finite string algebra,
then each indecomposable module has, up to isomorphism, at most two GR
submodules and the GR factors are uniserial.
\end{coro}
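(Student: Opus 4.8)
The plan is to deduce the corollary from the preceding Theorem by showing that over a representation-finite string algebra, \emph{every} indecomposable module is a string module containing no band submodules, so that the Theorem applies verbatim. First I would recall the basic structure theory of string algebras from \cite{BR}: the indecomposable modules over a string algebra fall into two classes, the string modules $M(C)$ and the band modules $M(B,\lambda,m)$ indexed by bands $B$, scalars $\lambda\in k^*$, and multiplicities $m$. A band gives rise, for each fixed $m$, to a one-parameter family of pairwise non-isomorphic indecomposable modules; hence if a string algebra admits even a single band, it has infinitely many indecomposable modules of a fixed dimension and is therefore representation-infinite. Taking the contrapositive: a representation-finite string algebra has no bands, and consequently every indecomposable module is a string module $M(C)$.

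The second point to check is that, in the absence of bands, a string module $M(C)$ can contain no band submodules. This is immediate: any submodule of a string module over a string algebra is again a direct sum of string modules (submodules of string modules are built from subwalks of $C$), and band modules never occur as direct summands of such submodules when there are no bands in the algebra at all. So the hypothesis ``$M(C)$ contains no band submodules'' in the Theorem is automatically satisfied for every indecomposable $M(C)$ over a representation-finite string algebra.

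With these two observations in place, the corollary follows directly: let $M$ be an arbitrary indecomposable module over the representation-finite string algebra $\Lambda$. By the first observation $M\cong M(C)$ for some string $C$, and by the second observation $M(C)$ contains no band submodules. The Theorem then gives that $M(C)$ has, up to isomorphism, at most two GR submodules and that the factors of the GR inclusions are uniserial. Since $M$ was arbitrary, this proves the claim.

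I do not expect a serious obstacle here; the corollary is essentially a bookkeeping consequence of the Theorem together with the dichotomy string modules versus band modules. The only mild subtlety is making sure that ``no bands in the algebra'' really does force ``no band submodules in any string module,'' but this is handled by the standard fact that the submodule lattice of a string module is described combinatorially in terms of the string, so no new indecomposable types (in particular no band modules) can appear as summands of submodules.
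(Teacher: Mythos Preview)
Your proposal is correct and follows exactly the intended route: the paper states the corollary without proof, as an immediate consequence of the preceding Theorem, and your argument supplies precisely the standard justification---representation-finite string algebras admit no bands (else one obtains a one-parameter family of indecomposables), so every indecomposable is a string module $M(C)$ with no band submodules, and the Theorem applies.
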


\subsection{} Now we assume that $Q$ is a tame quiver of type
$\widetilde{\mathbb{A}}_n$. Then every indecomposable regular module
with rank $r>1$ is a string module containing no band submodules, thus
has at most two GR submodules up to isomorphism by above theorem.

\begin{prop}\label{2gr} If an exceptional regular module has precisely two GR
submodules, then one of the GR inclusions is an irreducible map. In particular,
every exceptional regular module has at most one preprojective GR submodule, up to isomorphism.
\end{prop}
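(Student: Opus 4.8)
The plan is to combine the structural information about GR submodules of exceptional regular modules from Proposition \ref{bigprop}(1)c with the two-submodule bound just established. Let $M = X_i$ be an exceptional regular module ($X$ quasi-simple of rank $r>1$, $i\geq 1$), and suppose $M$ has precisely two GR submodules $T_1, T_2$ up to isomorphism. By Proposition \ref{bigprop}(1)c, each of $T_1,T_2$ is either preprojective or isomorphic to $X_{i-1}$ (the latter only occurring when $i>1$, in which case $X_{i-1} \subset X_i$ is the irreducible monomorphism from the quasi-simple tube structure). So the first case to handle is $i=1$: then $M=X$ is quasi-simple, and by Proposition \ref{bigprop}(1)b \emph{both} $T_1$ and $T_2$ are preprojective. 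I would rule this out, or rather show it forces a contradiction with having two non-isomorphic GR submodules, using Proposition \ref{epi}(2): there is an irreducible monomorphism $T_1 \ra Y$ with $Y$ indecomposable and an epimorphism $Y \twoheadrightarrow M$. Since $M$ is regular and $T_1$ is preprojective, $Y$ cannot be preprojective (a preprojective module has no regular quotient of the same or larger length forced this way — more carefully, one uses that $\Hom(\mathcal{P},\mathcal{R})$ together with the length/dimension-vector constraints pin down $Y$). The cleaner route: an irreducible monomorphism out of a preprojective indecomposable $T_1$ with indecomposable target $Y$ has $Y$ preprojective (the Auslander--Reiten quiver of $\mathcal P$ is closed under irreducible maps from preprojectives, since any irreducible map $P \to Y$ with $Y$ indecomposable and $\udim Y > \udim P$... ) — so $Y\in\mathcal P$, but then the epimorphism $Y \twoheadrightarrow M$ contradicts $\Hom(\mathcal P, \mathcal R)$ having no epimorphisms onto a regular module, because a regular module is not a quotient of a preprojective one (this is standard: preprojectives are "to the left"). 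Hence $i=1$ is impossible with two GR submodules, or at least the two GR submodules cannot both be preprojective — I expect the resolution is that a quasi-simple exceptional module has a \emph{unique} GR submodule, by Lemma \ref{onemap} applied through the regular-component analysis. This subpoint is where I'd need to be careful.

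For the main case $i>1$: the two GR submodules $T_1,T_2$ both lie in $\{\text{preprojectives}\} \cup \{X_{i-1}\}$, and since they are non-isomorphic while $X_{i-1}$ is a single isomorphism class, at least one of them — say $T_2 \cong X_{i-1}$ — is the quasi-length predecessor, and the inclusion $X_{i-1} \subset X_i$ is precisely the irreducible monomorphism of the tube. That immediately gives the first assertion: one of the two GR inclusions is irreducible. For the "in particular" clause, I then argue that $M$ has at most one preprojective GR submodule: if both $T_1$ and $T_2$ were preprojective, neither would be $\cong X_{i-1}$; but then by Proposition \ref{bigprop}(1)c each $T_j$ is preprojective, and I claim two non-isomorphic preprojective GR submodules of the same $M$ is impossible. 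Here I would invoke the general result (the known bound from \cite{Ch1} on Dynkin type $\mathbb A$, transported via Lemma \ref{propF}): the GR submodules of $M(C)$ correspond, via $\mathcal F$, to the GR submodules of $M_{\mathbb A}(C)$, of which there are at most two and — crucially — when there are two, the analysis in \cite{Ch1} says one of the two $\mathbb A$-type GR inclusions is irreducible; pulling back via $\mathcal F$ and using that $\mathcal F$ preserves lengths, the irreducible one lands on $X_{i-1}$, forcing the \emph{other} to be the (unique up to iso) preprojective one. So at most one preprojective GR submodule.

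The step I expect to be the genuine obstacle is the $i=1$ (quasi-simple exceptional) case: there, Proposition \ref{bigprop}(1)c does not apply and both candidate GR submodules are preprojective, so the "one inclusion is irreducible" conclusion is not visibly available from the tube structure, and I must instead show a quasi-simple exceptional module simply cannot have two GR submodules. I would attack this by combining Lemma \ref{onemap} (each preinjective has at most one GR submodule per regular component — not directly it, but the same $\udim$-coordinate argument with $\delta_\nu = 1$) with Proposition \ref{epi}(2): if $T \subset X$ is a GR inclusion with $X=X_1$ quasi-simple of rank $r>1$, the intermediate module $Y$ with $T\ra Y$ irreducible, $Y \twoheadrightarrow X$, must be regular of quasi-length $2$ in the same tube, namely $Y \cong (\text{a shift of } X)_2$, and then the preprojective $T$ is determined by $Y$ (it is the GR submodule data of $Y$), hence unique up to isomorphism. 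Working out that $Y$ is forced to be this specific regular module — using that $\Hom(T, X) \neq 0$ with $T$ preprojective, $X$ regular, plus the dimension-vector bound $\delta_\nu=1$ forcing a unique quasi-simple $X'$ in the tube with $\Hom(X', -)$ hitting things — is the calculation I'd want to do carefully but is not conceptually hard; it is essentially the content of Lemma \ref{onemap}'s proof technique applied on the preprojective side. Once that is in place, all cases give at most one preprojective GR submodule, and whenever there are two GR submodules the second is $X_{i-1}$ with irreducible inclusion.
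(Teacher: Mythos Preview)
Your overall route---reduce to the Dynkin $\mathbb{A}$ covering and invoke the result from \cite{Ch1} that when $M_{\mathbb{A}}(C)$ has two GR submodules they are exactly the two irreducible monomorphisms---is the paper's route. But two steps are not actually carried out, and one contains an error.

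First, the heart of the matter for $i>1$: you assert that the $\mathcal{F}$-image of an irreducible $\mathbb{A}$-inclusion ``lands on $X_{i-1}$'' because $\mathcal{F}$ preserves lengths. Length preservation is not enough; a submodule of $M(C)$ of length $|X_{i-1}|$ need not be $X_{i-1}$, and even if it were, the given inclusion need not be the irreducible one. What the paper does is use the explicit Butler--Ringel description of irreducible monomorphisms between string modules: writing $C=F\alpha^{-1}E$ with the $\mathbb{A}$-GR submodule determined by $E$, one checks (using the existence of the next irreducible mono $M(C)\to M'$ in the tube) that $F$ is a composition of arrows and $e(F)$ is a sink in $Q$, which is exactly the string-theoretic condition making $M(E)\to M(C)$ irreducible in $\widetilde{\mathbb{A}}_n$. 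This verification is the content of the proof, and your proposal skips it.

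Second, the $i=1$ case: your claim that an epimorphism $Y\twoheadrightarrow X$ with $Y$ preprojective and $X$ regular is impossible is false---already for the Kronecker quiver every regular simple is a quotient of a preprojective. The subsequent attempt to pin down $Y$ as a quasi-length-$2$ module in the tube is therefore off track. The clean fact you want is that an exceptional quasi-simple module over $\widetilde{\mathbb{A}}_n$ is uniserial (it is a string module for a linearly oriented segment of the cycle), hence has a \emph{unique} GR submodule; so the hypothesis ``precisely two GR submodules'' is vacuous for $i=1$. Alternatively, just run the string argument uniformly as the paper does, without splitting into cases on $i$.
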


Before proving this proposition, we briefly recall
how the irreducible monomorphisms between string modules
look like. We refer to \cite{BR} for details.
Let $C=c_r\cdots c_2c_1$ be a string such that $c_r$ is an arrow.
If there is an arrow $\gamma$ with $e(\gamma)=e(c_r)$, let $D=d_t\cdots d_2d_1$
be a string with $s(D)=s(d_1)=s(\gamma)$ such that $d_i$ is an arrow for every $i$ and such that
$t\geq 0$ is maximal.  Then the natural inclusion $M(C)\ra M(D\gamma^{-1} C)$ is an irreducible monomorphism.
Similarly, Let $C=c_r\cdots c_2c_1$ be a string such that $c_1$ is an arrow.
If there is an arrow $\gamma$ with $e(\gamma)=s(c_1)$,  let $D=d_t\cdots d_2d_1$
be a string with $e(D)=e(d_t)=s(\gamma)$ such that $d_i$ is an inverse of an arrow for every $i$ and such that $t\geq 0$ is maximal. Then  the natural inclusion $M(C)\ra M(C\gamma D)$ is an irreducible monomorphism.
Any irreducible monomorphism between string modules is of one of these forms.

\begin{proof} Let $M(C)$ be an exceptional regular module with
$C=c_m\cdots c_2c_1$,  which has precisely (up to isomorphism) two
GR submodules. Then the module $M_\mathbb{A}(C)$ also has two GR
submodules, which are actually given by the irreducible monomorphism
$X\ra M_\mathbb{A}(C)$ and $Y\ra M_\mathbb{A}(C)$.  By definition of
$M_\mathbb{A}(C)$, we may identify the arrows $\alpha_i$ or its
inverse in $\mathbb{A}_{m+1}$with $c_i$ in the string $C$ of
$\widetilde{\mathbb{A}}_n$. We may assume that $X$  is determine by
string $E$ and $M_\mathbb{A}(C)$ is determined by $F\alpha^{-1}E$,
where $F$ is a composition of arrows or a trivial path and $\alpha$
is an arrow. Then under the above identification, we have
$C=F\alpha^{-1}E$. Let $M(C)\ra M'$ be the unique irreducible
monomorphism with $M'$ determined by a string
$F'\beta^{-1}F\alpha^{-1}E$, where $ F'$ is a compositions of arrows
or a trivial path and $\beta$ is an arrow. Thus either the ending
vertex $e(F)$ is  a sink, or $F$ is a trivial path. Again by the
description of irreducible monomorphism, the inclusion
$\mathcal{F}(X)\ra M(C)$ is still an irreducible map. Note that
$\mathcal{F}(X)$ is a GR submodule of $M(C)$ by Lemma \ref{propF}.
\end{proof}

\begin{remark} Let $Q$ be a tame quiver of type
$\widetilde{\mathbb{A}}_n$ and $M$ be a non-simple indecomposable
module. Let  $gr(M)$ denote  the number of the isomorphism classes of the GR submodules of $M$.

\begin{itemize}
   \item [(1)] If $M$ is preprojective, each GR inclusion $X\subset M$
               is namely an irreducible map (Proposition \ref{bigprop}(1)a).
               In particular, $gr(M)\leq 2$ since
               there are precisely two irreducible maps to $M$, which are monomorphisms.
   \item[(2)]  If $M$ is a quasi-simple module of rank $r>1$, then
                $gr(M)=1$ since $M$ is uniserial.
   \item[(3)]  If $M$ is a non-quasi-simple regular module of rank $r>1$, then $gr(M)\leq 2$, and
              one of the GR inclusion is irreducible in case $gr(M)=2$ (Proposition \ref{2gr}).
   \item[(4)]  If $M=X_i$ is a regular module with $i>1$, where $X$ is a
              quasi-simple module of rank $r\geq 1$ with $\mu(X_r)\geq \mu(H_1)$,
               then $gr(M)=1$ and the unique
              GR inclusion is an irreducible map (Proposition \ref{bigprop}(4)b).
   \item[(5)]   If $M$ is preinjective, then $M$ contains, up to isomorphism,
                at most one GR submodule in each regular component (Lemma \ref{onemap}). If
                we identify the homogeneous modules $H_i$, then $gr(M)\leq 3$. Under the convention,
              if $gr(M)=3$, then $M$ contains a homogeneous module $H_i$ and as well as an exceptional
               module $X_j$ with rank $r>1$ as GR submodules. Note that $\mu(X_{mr})\neq \mu(H_s)$ for any $m>1$ and any $s>1$.
               It follows that $i=1$ and $j=r$ and thus $\delta=\udim X_j=\udim H_i$.  Therefore, $|M|<2|\delta|$ (Proposition \ref{epi}(2)).
   \item[(6)] A homogeneous simple module $H_1$ may contains more GR submodules. For example,
             if $n$ is odd and $Q$ is with a
             sink-source orientation (see \cite[Example 3]{Ch3}),
             then the GR measure of a homogeneous simple module is
             $\mu(H_1)=\{1,3,5,\ldots, n,n+1\}$.
             There are up to isomorphism $\frac{n+1}{2}$ indecomposable preprojective modules with length $n$ and
             they are all non-isomorphic GR submodules of $H_1$.
             In general, $gr(H_1)$ is bounded by the number of the indecomposable summands
             of the projective cover of $H_1$.
   \item[(7)] One may also define $gr(M)$ as the number of the dimension vectors of the GR submodules of $M$.
               Then it is easily seen that $gr(M)\leq 2$ for each indecomposable module $M$, which is not a homogeneous quasi-simple module $H_1$.

\end{itemize}

\end{remark}

\section{Direct predecessor}\label{predecessors}

Let $\Lambda$ be an Artin algebra and $I$ and $J$ be two different GR measures for $\Lambda$. 
Then $J$ is called a direct successor of $I$ if, first, $I<J$ and second, there does not
exist a GR measure $J'$ with $I<J'<J$. It is easily seen that if $J$
is the direct successor of $I$, then $J$ is a take-off (resp.
central, landing) measure if and only if so is $I$. Let $I^1$ be the
largest GR measure, i.e. the GR measure of an indecomposable
injective module with maximal length. It was proved in  \cite{R2}
that any Gabriel-Roiter measure $I$ different from $I^1$ has a
direct successor. However, there are GR measures, which does not
admit a direct predecessor. By the construction of the take-off
measures and the landing measures \cite{R1}, the GR measures
having no direct predecessors are central measures.

\subsection{} From now on, we fix a tame quiver $Q$  of type
$\widetilde{\mathbb{A}}_n$. The
following proposition gives a GR measure possessing no direct
predecessor.

\begin{prop}\label{H1}The GR measure $\mu(H_1)$ of a homogeneous quasi-simple module $H_1$  has no
direct predecessor.
\end{prop}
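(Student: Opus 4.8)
The plan is to show that $\mu(H_1)$ cannot be the direct successor of any GR measure, by exhibiting, for every GR measure $I < \mu(H_1)$, another GR measure strictly between $I$ and $\mu(H_1)$. The natural candidates for GR measures just below $\mu(H_1)$ are the GR measures $\mu(H_i)$ of the homogeneous modules of larger quasi-length, so the key will be to compare $\mu(H_i)$ with $\mu(H_1)$ and with an arbitrary GR measure $I<\mu(H_1)$. By Proposition \ref{bigprop}(3), $\mu(H_1)$ is a central measure, and by the classification of GR measures there are only take-off, central, and landing measures, so any $I<\mu(H_1)$ is either a take-off measure or a central measure with $I<\mu(H_1)$.

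First I would establish that $\mu(H_i)<\mu(H_1)$ for all $i\geq 2$, and moreover that $\mu(H_i)\to$ some limit as $i\to\infty$ in the sense that the $\mu(H_i)$ form a strictly decreasing sequence accumulating to $\mu(H_1)$ from below --- or more precisely, that for every $j$ there is some $i$ with $I<\mu(H_i)<\mu(H_1)$. To see $\mu(H_i)<\mu(H_1)$: the GR submodule of $H_i$ is $H_{i-1}$ or a preprojective module (Proposition \ref{bigprop}(1)c), and iterating, one gets $\mu(H_i)=\mu(P)\cup\{|H_1|\}\cup\{|H_2|\}\cup\cdots\cup\{|H_i|\}$ for a suitable preprojective $P$ that is a GR submodule of $H_1$ (here I use that $H_1$ itself, being quasi-simple, has a preprojective GR submodule by Proposition \ref{bigprop}(1)b). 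Since $|H_i|=i|\delta|=i(n+1)$ grows, the set $\mu(H_i)$ and $\mu(H_1)=\mu(P)\cup\{|H_1|\}$ agree up to the element $|H_1|$ and then $\mu(H_i)$ continues with larger elements; by the definition of the order on $\mathcal{P}(\mathbb{N})$ (the smallest element of the symmetric difference lies in the larger set), and since $\mu(H_1)$ starts with $\mu(P)\cup\{|H_1|\}$, we get $\mu(H_i)<\mu(H_1)$. The sequence $\mu(H_2)>\mu(H_3)>\cdots$ is strictly decreasing since each $\mu(H_{i+1})$ starts with $\mu(H_i)$ minus its top: concretely $\mu(H_{i+1})=\mu(H_i)\cup\{|H_{i+1}|\}$ forces $\mu(H_{i+1})<\mu(H_i)$.

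Now, given any GR measure $I<\mu(H_1)$, I claim $I<\mu(H_i)$ for $i$ large enough. Since $I\neq\mu(H_1)$ and $I<\mu(H_1)$, the smallest element $a$ of the symmetric difference $I\,\triangle\,\mu(H_1)$ lies in $\mu(H_1)$. All the $\mu(H_i)$ agree with $\mu(H_1)$ on elements $<|H_1|$ (they share the common initial segment coming from $\mu(P)$), and for $i$ large the only place $\mu(H_i)$ and $\mu(H_1)$ differ is at and above $|H_1|$; so if $a<|H_1|$ then $a\in\mu(H_i)$ too and the same element witnesses $I<\mu(H_i)$, while if $a=|H_1|$ then $a\in\mu(H_i)$ and again $I<\mu(H_i)$; the remaining case $a>|H_1|$ cannot make $\mu(H_i)\le I$ because $\mu(H_i)$ and $\mu(H_1)$ coincide below $a$ for suitable large $i$. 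In every case $I<\mu(H_i)<\mu(H_1)$, so $\mu(H_1)$ is not the direct successor of $I$; since $I$ was arbitrary, $\mu(H_1)$ has no direct predecessor.

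\textbf{The main obstacle} I expect is the bookkeeping in the order comparison: one must be careful that the common initial segment of $\mu(H_1)$ and of $\mu(H_i)$ really is the full set $\mu(P)$ of elements below $|H_1|$, i.e.\ that taking the GR submodule chain of $H_i$ down to a preprojective reproduces exactly the GR filtration of $H_1$ below its top element. This uses Proposition \ref{bigprop}(1)c repeatedly together with the fact (from part (2) and part (4)a applied to the rank-one case, or directly) that $\mu(P')<\mu(H_1)$ for the relevant preprojectives, so that the recursion in $\mu(H_i)$ never "escapes" to a branch incompatible with $\mu(H_1)$. Once that structural identity $\mu(H_i)=\mu(H_1)\text{-below-top}\ \cup\ \{|H_1|,|H_2|,\dots,|H_i|\}$ is pinned down, the rest is a direct application of the definition of the total order on $\mathcal{P}(\mathbb{N})$.
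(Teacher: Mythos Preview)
Your approach has a fatal error: you have the order on $\mathcal{P}(\mathbb{N})$ backwards. If $\mu(H_i)=\mu(H_1)\cup\{|H_2|,\ldots,|H_i|\}$, then $\mu(H_1)\subset\mu(H_i)$, so the smallest element of the symmetric difference is $|H_2|$, which lies in $\mu(H_i)$; by the definition this gives $\mu(H_1)<\mu(H_i)$, not $\mu(H_i)<\mu(H_1)$. Equivalently, in the rational-number formulation $\mu(I)=\sum_{a\in I}2^{-a}$, adding further elements only increases the sum. The paper in fact records (just before Proposition~\ref{ds}) that $\mu(H_{i+1})$ is the \emph{direct successor} of $\mu(H_i)$ for every $i\geq 1$. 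Hence the sequence $\mu(H_1)<\mu(H_2)<\mu(H_3)<\cdots$ lies entirely \emph{above} $\mu(H_1)$ and cannot be used to squeeze between an arbitrary $I<\mu(H_1)$ and $\mu(H_1)$. Your entire argument collapses at this point; the subsequent case analysis on the element $a$ is built on the false premise that $\mu(H_i)$ approaches $\mu(H_1)$ from below.

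The paper's proof takes a different, contradiction-based route. Suppose $\mu(M)$ were a direct predecessor. Since $\mu(H_1)$ is central, so is $\mu(M)$, hence $M$ is not preprojective. The GR submodule $Y$ of $H_1$ is preprojective, so $\mu(Y)<\mu(M)<\mu(H_1)$ forces $|M|>|H_1|$ by Proposition~\ref{epi}(1). If $M$ were preinjective, then $|M|>|H_1|=|\delta|$ yields a monomorphism $H_1\to M$ and hence $\mu(H_1)<\mu(M)$, a contradiction. So $M=X_i$ is regular with quasi-socle $X$ of rank $r>1$, and $|X_i|>|H_1|$ forces $i>r$; thus $\mu(X_r)\leq\mu(X_i)=\mu(M)<\mu(H_1)$, and Proposition~\ref{bigprop}(4)a gives $\mu(M)<\mu(X_j)<\mu(H_1)$ for all $j>i$, contradicting that $\mu(M)$ is the direct predecessor. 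The infinite family of GR measures squeezed in below $\mu(H_1)$ thus comes from the ray $\{X_j\}_{j>i}$ on an \emph{exceptional} tube with $\mu(X_r)<\mu(H_1)$, not from the homogeneous tube.
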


\begin{proof}
Assume, to the contrary, that $\mu(M)$ is the
direct predecessor of $\mu(H_1)$ for some indecomposable module $M$.
Since $\mu(H_1)$ is a central measure, so is $\mu(M)$.  It follows
that $M$ is not preprojective. Let $Y$ be a GR submodule of $H_1$.
Since $Y$  is preprojective,  $\mu(Y)<\mu(M)<\mu(H_1)$ and thus
$|M|>|H_1|$ (Proposition \ref{epi}(1)). If $M$ is preinjective, then there is a monomorphism
$H_1\ra M$ because $|M|>|H_1|$, and hence $\mu(H_1)<\mu(M)$. This
contradiction implies that  $M$ is a regular module. Assume that
$M=X_i$ for some quasi-simple module $X$ of rank $r>1$. Because
$|X_i|=|M|>|H_1|$, we have $i> r$. Therefore, $\mu(X_r)<\mu(M)<\mu(H_1)$.
It follows from that $\mu(M)<\mu(X_j)<\mu(H_1)$ for all $j>i$ (Proposition \ref{bigprop}(4)a). This is a
contradiction.
\end{proof}

\begin{prop}\label{prepre} Let $M\in\mathcal{I}\setminus\mathcal{T}$. If
$\mu(N)$ is the direct predecessor of $\mu(M)$ for some
indecomposable module $N$, then $N\in\mathcal{I}$ and $|N|>|M|$.
\end{prop}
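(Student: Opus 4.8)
The plan is to argue by elimination of cases for the possible nature of $N$, using Proposition \ref{bigprop} and Proposition \ref{epi} heavily, together with Lemma \ref{onemap}. First I would fix a GR submodule $T$ of $M$; since $M\in\mathcal{I}\setminus\mathcal{T}$, Proposition \ref{bigprop}(1)d gives $T\in\mathcal{R}$, and by Proposition \ref{bigprop}(7) we have $\mu(M)>\mu(X_j)$ for all $j\geq 1$, where $T\cong X_i$ for some quasi-simple $X$. Also $\mu(M)$ is a central (or landing) measure, being above the take-off part since $M\notin\mathcal{T}$; in fact I expect $\mu(M)$ to lie above $\mu(H_1)$ here, which I would establish early from Proposition \ref{bigprop}(3) or (7) — this pins down that $\mu(N)$, being a direct predecessor, is also not a take-off measure, so $N\notin\mathcal{P}$ by Proposition \ref{bigprop}(2).

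Next I would rule out $N\in\mathcal{R}$. Suppose $N\cong Z_k$ with $Z$ quasi-simple of rank $s$. If $s=1$ (homogeneous), then $N=H_k$ for some $k$; but $\mu(H_1)$ has no direct predecessor (Proposition \ref{H1}) and one checks the $H_k$ cannot squeeze directly below $\mu(M)$ — more precisely, since $\mu(N)<\mu(M)$ and $\mu(M)>\mu(H_1)$, either $\mu(N)\leq\mu(H_1)$, impossible for $\mu(N)$ to be a direct predecessor of something above $\mu(H_1)$ because $\mu(H_1)$ sits strictly between and is itself a GR measure, or $\mu(N)>\mu(H_1)$, forcing $N$ exceptional by Proposition \ref{bigprop}(4)b combined with (3). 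For exceptional $N=Z_k$ with $s>1$: if $\mu(Z_s)\geq\mu(H_1)$ then $\mu(Z_s)>\mu(H_j)$ for all $j$ and $Z_{k-1}\subset Z_k$ is the unique GR inclusion for $k\geq s$ (Proposition \ref{bigprop}(4)b), which lets me interpolate a GR measure of the form $\mu(Z_{k+1})$ or contradict directness; if $\mu(Z_s)<\mu(H_1)$ then by (4)a all $\mu(Z_j)<\mu(H_j')$, so $\mu(N)<\mu(H_1)<\mu(M)$, again contradicting directness. This shows $N\notin\mathcal{R}$, hence $N\in\mathcal{I}$.

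For the length inequality $|N|>|M|$: once $N\in\mathcal{I}$, I use Proposition \ref{epi}(1) with the chain $\mu(T)<\mu(N)<\mu(M)$ — here $T$ is the regular GR submodule of $M$, and $\mu(T)<\mu(N)$ holds since otherwise $\mu(N)\leq\mu(T)=\mu(X_i)<\mu(M)$ would force (by Proposition \ref{bigprop}(7), which bounds $\mu(M)$ above all $\mu(X_j)$) a GR measure strictly between, or directly contradicts that $N$ is a submodule-competitor for $M$'s GR measure — applying Proposition \ref{epi}(1) to $\mu(X_i)<\mu(N)<\mu(M)$ yields $|N|>|M|$. Alternatively and more cleanly: if $|N|\leq|M|$, then since both are preinjective and preinjective modules of larger length dominate, $N$ would embed in $M$ and give $\mu(N)<\mu(M)$ is fine, but then the direct successor structure on the preinjective/landing measures (the $\mu^i$ chain is well-understood and every central measure has a successor) would let me find an intermediate GR measure — so I would instead directly invoke that among preinjectives, $\mu(N)<\mu(M)$ together with $N$ a direct predecessor forces the length comparison via Proposition \ref{epi}(2), which produces an epimorphism onto $M$ from something of length $|N|$ mapped through, bounding $|M|<|N|$ (strictly, as the intermediate $Y$ in \ref{epi}(2) is a proper extension).

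The main obstacle I anticipate is the case analysis ruling out exceptional regular $N$: one must carefully use the dichotomy in Proposition \ref{bigprop}(4) and the position of $\mu(H_1)$ to show no exceptional regular GR measure can be a \emph{direct} predecessor of $\mu(M)$ for $M$ preinjective non-take-off — the subtlety is that exceptional regular measures can be large (above $\mu(H_1)$) and genuinely central, so one cannot dismiss them by size alone, and must instead exhibit a strictly intermediate GR measure (e.g. of the form $\mu(Z_{k+1})$ or $\mu(H_1)$ itself, depending on the subcase) to break directness. Establishing $|N|>|M|$ is then comparatively routine via Proposition \ref{epi}.
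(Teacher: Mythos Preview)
Your overall strategy matches the paper's (rule out $N$ preprojective, then regular, then get $|N|>|M|$ via Proposition~\ref{epi}(1)), but the argument excluding regular $N$ does not work. The claim that $\mu(N)>\mu(H_1)$ forces $N$ exceptional is false: every $H_k$ with $k\geq 2$ is homogeneous and has $\mu(H_k)>\mu(H_1)$. In the exceptional subcase with $\mu(Z_s)\geq\mu(H_1)$, your plan to ``interpolate $\mu(Z_{k+1})$'' between $\mu(N)$ and $\mu(M)$ goes the wrong way: since $\mu(N)=\mu(Z_k)$ is the direct predecessor and $\mu(Z_{k+1})>\mu(Z_k)$, you only get $\mu(Z_{k+1})\geq\mu(M)$, not a measure strictly between. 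You also lean on $\mu(M)>\mu(H_1)$, which Proposition~\ref{bigprop}(3) does not give without $\udim M>\delta$.

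The paper's argument is uniform and avoids all of this. Write $N=Y_j$. From $\mu(M)>\mu(X_t)$ for all $t$ (Proposition~\ref{bigprop}(7)) one gets $\mu(X_i)<\mu(Y_j)$ strictly (otherwise some $\mu(X_{t+1})$ would lie strictly between $\mu(N)$ and $\mu(M)$), so $|Y_j|>|M|$ by Proposition~\ref{epi}(1). Directness then forces $\mu(Y_{j+1})>\mu(M)$. Now the key step you are missing: apply Proposition~\ref{epi}(1) \emph{again}, this time to $Y_{j+1}$. By Proposition~\ref{bigprop}(1)c a GR submodule $T$ of $Y_{j+1}$ is either $Y_j$ or preprojective, and in either case $\mu(T)<\mu(M)<\mu(Y_{j+1})$, giving $|M|>|Y_{j+1}|>|Y_j|$, which contradicts $|Y_j|>|M|$. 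No case split on homogeneous versus exceptional, and no reference to $\mu(H_1)$, is needed.
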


\begin{proof}Since $\mu(N)$ is not a take-off measure, $N$ is not preprojective. Assume
for a contradiction that $N=Y_j$ is regular for some quasi-simple
module $Y$. Let $X_i$ be a GR submodule of  $M$ for some
quasi-simple module $X$ and some $i\geq 1$ (Proposition \ref{bigprop}(1)d). Then $\mu(M)>\mu(X_t)$
for all $t\geq 1$  by
Proposition \ref{bigprop}(7) and thus $\mu(X_t)\neq \mu(Y_j)$ for any $t\geq 1$. Therefore $\mu(X_i)<\mu(Y_j)<\mu(M)$. It follows
that $|Y_j|>|M|$ and $\mu(M)<\mu(Y_{j+1})$ since $\mu(N)=\mu(Y_j)$
is the direct predecessor of $\mu(M)$.  Notice that a GR submodule $T$
of $Y_{j+1}$ is either $Y_j$ or a preprojective module. In
particular $\mu(T)<\mu(M)<\mu(Y_{j+1})$. Thus $|M|>|Y_{j+1}|$ by Proposition \ref{epi}(1). This
contradicts $|N|=|Y_j|>|M|$. Therefore, $N$ is preinjective.
\end{proof}

\subsection{}A regular module $X_i$, $i>r$, for some
quasi-simple module $X$ of rank $r$  may contain a
preprojective module as a GR submodule. However, this cannot happen
if $\mu(X_r)\geq \mu(H_1)$, in which case the irreducible monomorphisms
$X_r\ra X_{r+1}\ra X_{r+2}\ra\ldots$ are GR inclusions (Proposition \ref{bigprop}(4)b).
The GR measure $\mu(X_r)$
for a quasi-simple module $X$ of rank $r$ is important when
comparing the GR measures of regular modules $X_i$ and those of
homogeneous modules $H_j$. Namely, there is a similar result that
can be used to compare the GR measures of two non-homogeneous
regular modules.

\begin{lemm}\label{2} Let $X,Y$ be quasi-simple modules of rank $r$ and $s$,
respectively. Assume that $\mu(X_r)\geq \mu(H_1)$.
\begin{itemize}
       \item[(1)]  If $\mu(X_r)>\mu(Y_s)$, then $\mu(X_i)>\mu(Y_j)$
                 for all $i\geq r$, $,j\geq 1$.
      \item[(2)]  If $\mu(X_i)=\mu(Y_j)$ for some $i\geq 2r$, then $r=s$
                 and $\mu(X_t)=\mu(Y_t)$ for every $t\geq r$.
       \item[(3)] If $\mu(X_{2r})>\mu(Y_{2s})$, then
                 $\mu(X_i)>\mu(Y_j)$ for all $i\geq 2r, j\geq 1$.
\end{itemize}
\end{lemm}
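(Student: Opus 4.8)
The plan is to reduce all three parts to the single key comparison principle behind Proposition \ref{bigprop}(4)b: once $\mu(X_r)\geq\mu(H_1)$, the chain $X_r\subset X_{r+1}\subset X_{r+2}\subset\cdots$ consists of GR inclusions, so $\mu(X_{i+1})=\mu(X_i)\cup\{|X_{i+1}|\}$ with $|X_{i+1}|=|X_i|+|\delta|$; equivalently, knowing $\mu(X_i)$ for one $i\geq r$ determines $\mu(X_j)$ for all $j\geq r$, and these sets grow by appending larger and larger elements. First I would record this explicitly: writing $\mu(X_r)=I_0$ with $\max I_0=|X_r|=r|\delta|$ (the last inclusion $X_{r-1}\subset X_r$ need not be GR, but $X_r$ is quasi-simple hence uniserial over the relevant string setup, so $|X_r|=r|\delta|$ and $\mu(X_r)$ ends in $r|\delta|$ — more simply, $|X_i|=i|\delta|$ for all $i$ by additivity of dimension vectors up the tube), we get $\mu(X_i)=I_0\cup\{(r{+}1)|\delta|,\dots,i|\delta|\}$ for $i\geq r$.

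For part (1): since $\mu(X_r)\geq\mu(H_1)$ and $\mu(Y_s)<\mu(X_r)$, in particular $Y_s$ is not forced into a ``GR-stable'' regime above $\mu(H_1)$; I would argue that $\mu(Y_j)<\mu(X_r)$ for all $j\geq 1$. Indeed if $\mu(Y_s)<\mu(H_1)$ this is Proposition \ref{bigprop}(4)a applied with $H_1$ (giving $\mu(Y_j)<\mu(H_k)$, in particular $<\mu(X_r)$ once we also know $\mu(X_r)\geq\mu(H_1)$ — wait, that only gives $\mu(Y_j)<\mu(H_1)\leq\mu(X_r)$, which suffices). If instead $\mu(H_1)\leq\mu(Y_s)<\mu(X_r)$, then $Y$ too has $\mu(Y_s)\geq\mu(H_1)$, so $\mu(Y_j)=\mu(Y_s)\cup\{(s{+}1)|\delta|,\dots,j|\delta|\}$ for $j\geq s$; comparing with $\mu(X_i)=\mu(X_r)\cup\{(r{+}1)|\delta|,\dots,i|\delta|\}$ and using that the new elements are all multiples of $|\delta|$ that are $\geq(r{+}1)|\delta|$ (resp. $\geq(s{+}1)|\delta|$), while the smallest element of the symmetric difference of $\mu(X_r)$ and $\mu(Y_s)$ is at most $r|\delta|<(\,\cdot\,{+}1)|\delta|$ and lies in $\mu(X_r)$ by hypothesis, one sees the symmetric difference of $\mu(X_i)$ and $\mu(Y_j)$ has the same smallest element, still in the $X$-side. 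For $j<s$ one instead compares $\mu(Y_j)$ (a subset-start of $\mu(Y_s)$) directly, again finding its smallest distinguishing element below $\mu(X_r)$. This gives $\mu(X_i)>\mu(Y_j)$ for all $i\geq r$, $j\geq 1$.

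For part (2): if $\mu(X_i)=\mu(Y_j)$ with $i\geq 2r$, first note $\mu(X_i)\geq\mu(X_r)\geq\mu(H_1)$, so $\mu(Y_j)\geq\mu(H_1)$, forcing $\mu(Y_s)\geq\mu(H_1)$ as well (if $\mu(Y_s)<\mu(H_1)$ then $\mu(Y_j)<\mu(H_1)$ by \ref{bigprop}(4)a, contradiction). Hence $\mu(Y_j)=\mu(Y_s)\cup\{(s{+}1)|\delta|,\dots,j|\delta|\}$, and the equality $\mu(X_i)=\mu(Y_j)$ forces $|X_i|=|Y_j|$, i.e. $i|\delta|=j|\delta|$, so $i=j\geq 2r$; then also $j\geq 2r\geq 2s$ would need $r=s$ — to get $r=s$ I would look at the largest ``gap-free'' tail: both sets end with a run $\{\,?,\dots,i|\delta|\}$ of consecutive multiples of $|\delta|$ starting from $(r{+}1)|\delta|$ on the $X$-side and $(s{+}1)|\delta|$ on the $Y$-side, but the run on the $X$-side might be longer if elements of $\mu(X_r)$ themselves continue the pattern; the clean way is: $\max\mu(X_r)=r|\delta|$ and, below it, $\mu(X_r)$ contains no multiple of $|\delta|$ exceeding $r|\delta|$ trivially, while the element $(r{+}1)|\delta|$ first appears at step $r{+}1$; so the smallest element of $\mu(X_i)$ that is $>r|\delta|$ is $(r{+}1)|\delta|$, and likewise for $Y$ it is $(s{+}1)|\delta|$ above $s|\delta|$. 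Matching the position of $i|\delta|=j|\delta|$ and peeling off the common tail of consecutive multiples pins down $r=s$ and then $\mu(X_r)=\mu(Y_s)$, whence $\mu(X_t)=\mu(Y_t)$ for all $t\geq r$ by the recursion. I expect this bookkeeping — showing that the ``appended tail'' of multiples of $|\delta|$ cannot be shifted or lengthened by coincidental multiples already present in $\mu(X_r)$ — to be the main obstacle, and I would handle it by invoking Proposition \ref{bigprop}(6) or a direct length count to control $\mu(X_r)$ below $|\delta|$ steps, ensuring the tails are genuinely determined by $r$ and $s$.

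For part (3): this is the asymmetric analogue of (1) where we only assume $\mu(X_r)\geq\mu(H_1)$ but compare at level $2r$, $2s$. If $\mu(Y_s)<\mu(H_1)$, then $\mu(Y_{2s})<\mu(H_1)\leq\mu(X_r)\leq\mu(X_{2r})$ and part (1) (applied to $Y_j$) already gives $\mu(X_i)>\mu(Y_j)$ for all $i\geq r$, $j\geq 1$, which is stronger than claimed. If $\mu(Y_s)\geq\mu(H_1)$, then both $X$ and $Y$ are in the GR-stable regime, $\mu(X_{2r})>\mu(Y_{2s})$ is an equality of the form $[\,\mu(X_r)\text{-tail}\,]>[\,\mu(Y_s)\text{-tail}\,]$, and since both appended tails consist of multiples of $|\delta|$ strictly above $r|\delta|$ resp. $s|\delta|$, the smallest distinguishing element of $\mu(X_{2r})$ vs $\mu(Y_{2s})$ already lies at level $\leq\max(r,s)|\delta|$, hence is insensitive to passing from $2r,2s$ to arbitrary $i\geq 2r$, $j\geq 1$ (for $j\geq s$ append more tail on the $Y$-side; for $j<s$ use that $\mu(Y_j)$ starts $\mu(Y_s)$). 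So $\mu(X_i)>\mu(Y_j)$ throughout. I would close the argument by remarking that parts (1) and (3) are thus two faces of the same monotonicity-of-tails observation, the only real content being the case analysis on whether $\mu(Y_s)\geq\mu(H_1)$.
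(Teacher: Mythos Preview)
Your overall strategy (split on whether $\mu(Y_s)\geq\mu(H_1)$, then exploit that the irreducible monomorphisms above level $r$ are GR inclusions) matches the paper's, but there is a genuine error that derails parts (2) and (3) and muddles (1). You assert $|X_r|=r|\delta|$ and more generally $|X_i|=i|\delta|$, with increments $|X_{i+1}|-|X_i|=|\delta|$. This is false for $r>1$: the quasi-length-$r$ module $X_r$ has dimension vector $\delta$, so $|X_r|=|\delta|$, not $r|\delta|$; and the increment $|X_{i+1}|-|X_i|$ equals $|\tau^{-i}X|$, which cycles through the (generally unequal) lengths of the $r$ quasi-simples on the tube. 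Only $|X_{kr}|=k|\delta|$ holds. Consequently the ``tail of consecutive multiples of $|\delta|$'' you try to peel off in (2) and (3) does not exist, and your deduction $i=j$ from $|X_i|=|Y_j|$ is unjustified.

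The paper avoids this by using only the two anchors that \emph{are} always present: $|X_r|=|Y_s|=|\delta|$ and $|X_{2r}|=|Y_{2s}|=2|\delta|$. For (1), once $\mu(Y_s)\geq\mu(H_1)$, one simply notes that $\mu(Y_j)$ starts with $\mu(Y_s)$ and every appended element exceeds $|\delta|=\max\mu(X_r)$, so the least element of the symmetric difference with $\mu(X_r)$ is unchanged. For (2), from $\mu(X_i)=\mu(Y_j)$ one first gets $|Y_j|=|X_i|\geq 2|\delta|$, hence $j\geq 2s$, and $\mu(Y_s)\geq\mu(H_1)$; then both GR measures contain $|\delta|$ and $2|\delta|$, forcing $\mu(X_r)=\mu(Y_s)$ and $\mu(X_{2r})=\mu(Y_{2s})$. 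The key step you are missing is now a pure count: the set $\mu(X_{2r})\setminus\mu(X_r)=\{|X_{r+1}|,\dots,|X_{2r}|\}$ has exactly $r$ elements, and equals $\mu(Y_{2s})\setminus\mu(Y_s)$, which has $s$ elements; hence $r=s$. No control on $\mu(X_r)$ via Proposition~\ref{bigprop}(6) is needed. Part (3) then follows by the same anchoring argument.
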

\begin{proof}(1) If $\mu(Y_s)<\mu(H_1)$, then $\mu(Y_j)<\mu(H_1)$ for
all $j\geq 1$.  Thus we may assume that $\mu(Y_s)\geq\mu(H_1)$.
Since for each $j\geq s$, $\mu(Y_j)$ starts with $\mu(Y_s)$  and
$|Y_s|=|X_r|=|\delta|$, we have $\mu(X_r)>\mu(Y_j)$.

(2)
By assumption, we have $j\geq 2s$ because $|Y_j|=|X_i|\geq 2|\delta|$.
Since $\mu(X_r)\geq \mu(H_1)$, we have $\mu(Y_s)\geq \mu(H_1)$ and the irreducible
monomorphisms in the sequences
$$X_r\ra X_{r+1}\ra X_{r+2}\ra\ldots,\,\ \textrm{and}\,\  Y_s\ra Y_{s+1}\ra Y_{s+2}\ra\ldots$$
are all
GR inclusions (Proposition \ref{bigprop}(4)).
It follows that
$$\begin{array}{rclll}
\mu(Y_{j})&=&\mu(Y_s)\cup\{|Y_{s+1}|,|Y_{s+2}|,\ldots,|Y_{2s-1}|,|Y_{2s}|,|Y_{2s+1}|,\ldots,|Y_{j}|\}&&\\
&=&\mu(X_r)\cup\{|X_{r+1}|,|X_{r+2}|,\ldots,|X_{2r-1}|,|X_{2r}|,|X_{2r+1}|,\ldots,|X_{i}|\}&=&\mu(X_i).
\end{array}$$
Since $|X_r|=|Y_s|=|\delta|$ and $|X_{2r}|=|Y_{2s}|=2|\delta|$,
so $\mu(X_r)=\mu(Y_s)$ and
$\mu(X_{2r})=\mu(Y_{2s})$. Note that
$$|X_{r+l}|-|X_{r+l-1}|=|Y_{r+l}|-|Y_{r+l-1}|$$ for all $l\geq 1$.
It follows  that $r=s$ and $\mu(X_t)=\mu(Y_t)$ for all $t\geq r=s$.

(3) follows similarly.
\end{proof}

\begin{coro}Let $X$ be a quasi-simple module of rank $r$ such that
$\mu(X_r)\geq \mu(H_1)$. If $M$ is an indecomposable module such
that $\mu(M)=\mu(X_i)$ for some $i\geq 2r$, then $M$ is a regular
module.
\end{coro}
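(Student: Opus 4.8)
The plan is to show that $M$ is neither preprojective nor preinjective, so that, being indecomposable, it must be regular. Since $i\geq 2r>r$, the module $X_r$ is a proper submodule of $X_i$, so $\mu(M)=\mu(X_i)>\mu(X_r)\geq\mu(H_1)$. By Proposition \ref{bigprop}(2) every preprojective module has GR measure strictly below $\mu(H_1)$, and every take-off measure also lies below $\mu(H_1)$; hence $M$ is neither preprojective nor a take-off module, and it remains only to exclude the possibility $M\in\mathcal{I}\setminus\mathcal{T}$.

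So suppose $M\in\mathcal{I}\setminus\mathcal{T}$. By Proposition \ref{bigprop}(1)d a GR submodule $N$ of $M$ is regular, say $N\cong Y_j$ with $Y$ quasi-simple of rank $s$, and by Proposition \ref{bigprop}(7) we have $\mu(M)>\mu(Y_t)$ for all $t\geq1$; this inequality will be the source of the contradiction. Since $\mu(X_r)\geq\mu(H_1)$ and $i-1\geq r$, Proposition \ref{bigprop}(4)b gives that $X_{i-1}\subset X_i$ is a GR inclusion, so $\mu(X_i)=\mu(X_{i-1})\cup\{|X_i|\}$; as $N$ is a GR submodule of $M$ and $|M|=|X_i|=\max\mu(X_i)$, this forces $\mu(Y_j)=\mu(N)=\mu(X_{i-1})$. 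Because $i-1\geq 2r-1\geq r$, this measure is at least $\mu(X_r)\geq\mu(H_1)$, so $\mu(Y_s)\geq\mu(H_1)$ by Proposition \ref{bigprop}(4)a, and $Y_t$ is the unique GR submodule of $Y_{t+1}$ for every $t\geq s$.

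Next I would run down a GR filtration $N_1\subset\cdots\subset N_m=M$. Since $\mu(X_i),\mu(X_{i-1}),\dots,\mu(X_r)$ are obtained from one another by deleting the largest element, one gets $\mu(N_{m-k})=\mu(X_{i-k})$ for $0\leq k\leq i-r$; and inductively, using Proposition \ref{bigprop}(1)b,c together with the fact that a preprojective GR submodule would have measure below $\mu(H_1)\leq\mu(X_{i-k-1})$ (valid since $i-k-1\geq r$), one sees $N_{m-k}\cong Y_{j-k+1}$ for $1\leq k\leq i-r$. At $k=i-r$ this yields $|Y_{j-i+r+1}|=|X_r|=|\delta|$, which in a tube of rank $s$ forces $j-i+r+1=s$, i.e. $j=s+i-r-1$; and reading off the lengths from $\mu(Y_{s+l})=\mu(X_{r+l})$ for $0\leq l\leq i-r-1$ shows that the first $i-r-1\ (\geq r-1)$ quasi-simple dimensions in the two tubes coincide, whence $\sum_{a=1}^{r-1}\dim\tau^{-(a-1)}Y=\sum_{a=1}^{r-1}\dim\tau^{-(a-1)}X=|\delta|-\dim\tau^{-(r-1)}X<|\delta|$ and therefore $s\geq r$.

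Finally I would apply Proposition \ref{bigprop}(7) to $Y_{j+1}$. Since $j\geq s$, the inclusion $Y_j\subset Y_{j+1}$ is a GR inclusion, so $\mu(Y_{j+1})=\mu(X_{i-1})\cup\{|Y_{j+1}|\}$, while $\mu(M)=\mu(X_{i-1})\cup\{|X_i|\}$; as $|X_i|$ and $|Y_{j+1}|$ both exceed $|X_{i-1}|=\max\mu(X_{i-1})$, the relation $\mu(M)>\mu(Y_{j+1})$ forces $|X_i|<|Y_{j+1}|$. But length bookkeeping gives $|Y_{j+1}|\leq|X_i|$: if $i>2r$ the matching of quasi-simple dimensions already extends to a full period, forcing $s=r$ and the two rays to coincide, so $|Y_{j+1}|=|X_i|$; and if $i=2r$ then $|Y_{j+1}|=|Y_{s+r}|=|\delta|+\sum_{a=1}^r\dim\tau^{-(a-1)}Y\leq|\delta|+\sum_{a=1}^s\dim\tau^{-(a-1)}Y=2|\delta|=|X_{2r}|=|X_i|$, using $r\leq s$. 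Either way $|Y_{j+1}|\leq|X_i|$, a contradiction; hence $M$ is regular. I expect the boundary case $i=2r$ to be the main obstacle: there $\mu(Y_j)=\mu(X_{2r-1})$ has index one short of the hypothesis of Lemma \ref{2}(2), so one cannot simply quote that lemma and must instead recover the structure of the $Y$-ray by hand from the GR filtration and close the argument with the explicit inequality $\sum_{a=1}^{r}\dim\tau^{-(a-1)}Y\leq|\delta|$.
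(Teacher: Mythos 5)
Your proof is correct, but it closes the preinjective case by a genuinely different route than the paper. After the common setup (GR submodule $N\cong Y_j$ regular, $\mu(Y_j)=\mu(X_{i-1})$, $\mu(Y_s)\geq\mu(H_1)$), the paper argues much more briefly: from the epimorphism $Y_{j+1}\ra M$ of Proposition \ref{epi}(2) (and the strict inequality $|Y_{j+1}|>|M|$ from Proposition \ref{epi}(1)) it deduces $j\geq 2s$, then quotes Lemma \ref{2}(2) to get $r=s$ and $\mu(Y_{j+1})=\mu(X_i)$, contradicting $|Y_{j+1}|>|M|=|X_i|$. You instead descend an entire GR filtration of $M$, identify its regular terms as $Y_{j-k+1}$ with measures $\mu(X_{i-k})$, and do the length bookkeeping with quasi-simple dimensions by hand to force $s\geq r$ and $|Y_{j+1}|\leq|X_i|$. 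Your instinct about the boundary case is well founded: at $i=2r$ the equality $\mu(Y_j)=\mu(X_{2r-1})$ is indeed one index short of the hypothesis of Lemma \ref{2}(2) in either direction, and the paper's assertion $|M|=|X_i|>2|\delta|$ is only $\geq$ there; the paper's argument still closes because $|Y_{j+1}|>|M|=2|\delta|$ forces $j\geq 2s$ while $|Y_j|=|X_{2r-1}|<2|\delta|$ forces $j<2s$, a contradiction without the lemma. So the trade-off is: the paper's proof is shorter by leaning on Lemma \ref{2}(2) (with a slightly glossed boundary), while yours is more computational but self-contained, treats $i=2r$ explicitly, and also replaces the paper's monomorphism argument ($H_1\ra M$) for $\mu(Y_s)\geq\mu(H_1)$ by the cleaner contrapositive of Proposition \ref{bigprop}(4)a; it additionally records the easy exclusion of the preprojective and take-off cases, which the paper leaves implicit.
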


\begin{proof}
Assume, to the contrary, that $M$ is preinjective. Let $Y_t$ be a
GR submodule of $M$ for some quasi-simple module $Y$ of rank $s$.
Then $\mu(M)>\mu(Y_j)$ for all $j\geq 1$ by Proposition
\ref{bigprop}(7).  Thus $Y\ncong X$ and $t\geq 2s$ since
$|M|=|X_i|>2|\delta|$ and since there is an epimorphism $Y_{t+1}\ra M$ by Proposition \ref{epi}(2).
Notice that  $\mu(Y_s)\geq \mu(H_1)$. Otherwise,
we would have $\mu(Y_{t})<\mu(H_1)$. However, there is a monomorphism $H_1\ra M$ since
$|M|>2|\delta|$. We obtain a contradiction because $Y_t$ is a GR submodule of $M$.
Since $Y_t$ is a GR submodule of $N$ and $\mu(M)=\mu(X_i)$, so $\mu(Y_t)=\mu(X_{i-1})$.
Therefore, $r=s$ and
$\mu(Y_{t+1})=\mu(X_i)$ by above lemma. This contradicts
$|Y_{t+1}|>|M|=|X_i|$ (Proposition \ref{epi}(1)). Thus $M$ is regular.
\end{proof}

\subsection{}We have seen in Proposition \ref{bigprop}(4) that
the irreducible maps $H_1\ra H_2\ra H_3\ra\ldots$ are GR inclusions.
One can show more: namely, in \cite{Ch3} ( or \cite{Ch4} for
general cases) we proved that $\mu(H_{i+1})$ is the direct successor
of $\mu(H_i)$ for each $i\geq 1$. Let $X$ be a quasi-simple module of rank
$r>1$. It is possible (for example, if $\mu(X_r)\geq \mu(H_1)$) that
all irreducible maps $X_r\ra X_{r+1}\ra X_{r+2}\ra\ldots$ are GR
inclusions. However, it is not true in general that $\mu(X_{j+1})$
is the direct successor of $\mu(X_j)$ for all $j\geq r$ (\cite[Example 4]{Ch3}).
The following proposition tells if $\mu(X_r)\geq
\mu(H_1)$  and if $\mu(X_{j+1})$ is not the direct successor of
$\mu(X_j)$, then $j< 2r$.

\begin{prop}\label{ds} Let $X$ be a quasi-simple module of rank $r$ such that
$\mu(X_r)\geq \mu(H_1)$. Then $\mu(X_{j+1})$ is a direct successor
of $\mu(X_{j})$ for each $j\geq 2r$.
\end{prop}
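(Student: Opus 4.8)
The plan is to argue by contradiction. Suppose there is an indecomposable module $M$ with $\mu(X_j)<\mu(M)<\mu(X_{j+1})$ for some $j\geq 2r$, and derive a contradiction. Since $\mu(X_r)\geq\mu(H_1)$, Proposition \ref{bigprop}(4)b says every inclusion $X_i\subset X_{i+1}$ with $i\geq r$ is a GR inclusion, so $\mu(X_{i+1})=\mu(X_i)\cup\{|X_{i+1}|\}$ with $|X_{i+1}|$ exceeding every element of $\mu(X_i)$, and $\mu(X_r)<\mu(X_{r+1})<\cdots$ is strictly increasing. I would first record a purely order-theoretic fact about $(\mathcal{P}(\mathbb{N}),<)$: if $A<B<C$ with $C=A\cup\{N\}$ and $N>\max A$, then $B$ starts with $A$, $N\notin B$, and every element of $B\setminus A$ is $>N$. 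Applied to $A=\mu(X_j)$, $B=\mu(M)$, $C=\mu(X_{j+1})$, this gives that $\mu(M)$ starts with $\mu(X_j)$, that $|X_{j+1}|\notin\mu(M)$, and that $|M|>|X_{j+1}|$; as $j\geq 2r$, also $|M|>|X_{2r}|=2|\delta|$.

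Next I would pin down a regular GR submodule of $M$. Because $\mu(M)>\mu(X_r)\geq\mu(H_1)$, $M$ is not preprojective (Proposition \ref{bigprop}(2)), hence regular or preinjective. If $M$ is regular, write $M=W_l$ with $W$ quasi-simple of rank $p$; then $\mu(W_p)\geq\mu(H_1)$ (otherwise $\mu(W_i)<\mu(H_1)$ for all $i$ by Proposition \ref{bigprop}(4)a), $l>2p$ (since $|M|>2|\delta|$), and the unique GR submodule of $M$ is $W_{l-1}$ by Proposition \ref{bigprop}(4)b. If $M$ is preinjective, its GR submodule is regular by Proposition \ref{bigprop}(1)d. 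In either case I write the GR submodule as $Y_t$ with $Y$ quasi-simple of rank $s$, and then check that $\mu(Y_s)\geq\mu(H_1)$, that $t\geq 2s$, and that there is an epimorphism $Y_{t+1}\ra M$. For regular $M$ this is all immediate ($M=Y_{t+1}$, the epimorphism is the identity, $t=l-1\geq 2p=2s$). For preinjective $M$: $\mu(Y_s)\geq\mu(H_1)$ because otherwise $\mu(Y_t)<\mu(H_1)$ while $H_1$ embeds in $M$ as a proper submodule (as $|M|>2|\delta|$), contradicting that $Y_t$ is a GR submodule; the epimorphism $Y_{t+1}\ra M$ comes from Proposition \ref{epi}(2) together with the fact that the only irreducible monomorphism out of $Y_t$ in its tube is $Y_t\ra Y_{t+1}$; and then $|Y_{t+1}|\geq|M|>2|\delta|=|Y_{2s}|$ forces $t\geq 2s$. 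In particular $|Y_{t+1}|\geq|M|>|X_{j+1}|$.

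The core is then to compare the two ``prefixes'' of $\mu(M)$: the set $\mu(X_j)$ (from the first step) and the set $\mu(Y_t)$ (since $\mu(M)=\mu(Y_t)\cup\{|M|\}$ with $|M|$ larger than everything in $\mu(Y_t)$, so $\mu(M)$ also starts with $\mu(Y_t)$). Being initial segments of $\mu(M)$, these two sets are nested. If $\mu(Y_t)\subseteq\mu(X_j)$, then a direct computation using $|M|>|X_{j+1}|>|X_j|\geq\max\mu(X_j)$ shows $\mu(M)<\mu(X_j)$ when the inclusion is proper and $\mu(M)>\mu(X_{j+1})$ when it is an equality — both impossible. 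Hence $\mu(X_j)\subsetneq\mu(Y_t)$, and since $\mu(Y_t)$ starts with $\mu(X_j)$ we get $\mu(X_j)<\mu(Y_t)$. At this point I would invoke Lemma \ref{2} (legitimate since $\mu(X_r),\mu(Y_s)\geq\mu(H_1)$ and $j,t\geq 2r,2s$), splitting on how $\mu(X_{2r})$ compares with $\mu(Y_{2s})$. If $\mu(X_{2r})>\mu(Y_{2s})$, then $\mu(X_j)>\mu(Y_t)$ by Lemma \ref{2}(3), contradicting $\mu(X_j)<\mu(Y_t)$. If $\mu(X_{2r})<\mu(Y_{2s})$, then $\mu(Y_t)>\mu(X_{j+1})$ by Lemma \ref{2}(3) applied with $Y$ in the role of $X$ (using $\mu(Y_s)\geq\mu(H_1)$), so $\mu(M)>\mu(Y_t)>\mu(X_{j+1})$, a contradiction. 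If $\mu(X_{2r})=\mu(Y_{2s})$, then $r=s$ and $\mu(Y_u)=\mu(X_u)$ for every $u\geq r$ by Lemma \ref{2}(2), so $\mu(Y_t)=\mu(X_t)$ with $t>j$ (from $\mu(X_j)<\mu(X_t)$ and strict monotonicity), whence $\mu(M)=\mu(X_t)\cup\{|M|\}>\mu(X_t)\geq\mu(X_{j+1})$, again a contradiction. This exhausts all cases.

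The step I expect to be most delicate is keeping the order-combinatorics honest: verifying that $\mu(M)$ genuinely starts with $\mu(X_j)$, that this is compatible with $\mu(M)$ also starting with $\mu(Y_t)$ so that the two are forced to be nested, and — crucially — that the hypothesis $j\geq 2r$ is precisely what makes both the length estimate $|M|>2|\delta|$ and the comparison Lemma \ref{2} applicable. For $r\leq j<2r$ neither tool is available, which is consistent with the fact recalled before the statement that $\mu(X_{j+1})$ need not be a direct successor of $\mu(X_j)$ in that range.
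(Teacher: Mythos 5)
Your architecture is essentially sound and is organized differently from the paper's proof (the paper first excludes regular modules strictly between $\mu(X_j)$ and $\mu(X_{j+1})$ by a minimal-length argument plus Lemma \ref{2}(2), and then reduces the preinjective case to the regular one via Proposition \ref{bigprop}(1)d and (7)), and most of your steps check out: the order-theoretic fact in your first step is correct, as is the reduction to a regular GR submodule $Y_t$ with $\mu(Y_s)\geq\mu(H_1)$, $t\geq 2s$, an epimorphism $Y_{t+1}\ra M$, and the final trichotomy via Lemma \ref{2}. The genuine flaw is in the elimination of the sub-case $\mu(Y_t)=\mu(X_j)$, and it sits at the crucial point of the whole argument. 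You claim a direct computation gives $\mu(M)>\mu(X_{j+1})$ there; it gives the opposite. Indeed, in that sub-case $\mu(M)=\mu(Y_t)\cup\{|M|\}=\mu(X_j)\cup\{|M|\}$ while $\mu(X_{j+1})=\mu(X_j)\cup\{|X_{j+1}|\}$, and since $|M|>|X_{j+1}|>|X_j|$ the smallest element of the symmetric difference is $|X_{j+1}|$, which lies in $\mu(X_{j+1})$; hence $\mu(M)<\mu(X_{j+1})$ (in this order, adjoining a larger new element yields a smaller measure). So the configuration is perfectly compatible with $\mu(X_j)<\mu(M)<\mu(X_{j+1})$, and no purely combinatorial contradiction is available: this is exactly the dangerous configuration, and it is the place where the paper must invoke module-theoretic input (Proposition \ref{bigprop}(7), resp.\ Lemma \ref{2}(2)).

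The gap is repairable with tools you have already assembled. If $\mu(Y_t)=\mu(X_j)$ with $j\geq 2r$, $t\geq 2s$ and $\mu(X_r),\mu(Y_s)\geq\mu(H_1)$, then Lemma \ref{2}(2) gives $r=s$ and $\mu(X_u)=\mu(Y_u)$ for all $u\geq r$; comparing largest elements yields $t=j$ and then $|Y_{t+1}|=|X_{j+1}|$. But you have an epimorphism $Y_{t+1}\ra M$, so $|X_{j+1}|=|Y_{t+1}|\geq |M|>|X_{j+1}|$, a contradiction. (Alternatively: if $M$ is preinjective, Proposition \ref{bigprop}(7) applied to the GR submodule $Y_t$ gives $\mu(M)>\mu(Y_{j+1})=\mu(X_{j+1})$; if $M$ is regular then $M=Y_{t+1}$ and $\mu(M)=\mu(X_{t+1})$, which cannot lie strictly between the consecutive measures $\mu(X_j)$ and $\mu(X_{j+1})$.) With this sub-case closed, your nestedness argument and the Lemma \ref{2} trichotomy go through and the proposition follows.
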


\begin{proof}
We may assume  $r>1$.  We first show that there does not exist an
indecomposable regular module $M$ such that $\mu(M)$ lies between
$\mu(X_j)$ and $\mu(X_{j+1})$ for any $j\geq 2r$. Assume, to the contrary, that there exists a $j\geq 2r$ and an
indecomposable regular module $M$ with
$\mu(X_j)<\mu(M)<\mu(X_{j+1})$.  We may assume that $|M|$ is
minimal. Then $|M|>|X_{j+1}|>2|\delta|$, since $X_j$ is a GR
submodule of $X_{j+1}$.   Let $M=Y_i$ for some quasi-simple module
$Y$ of rank $s>1$. It follows that $\mu(Y_s)\geq \mu(H_1)$ and $i>
2s$. Therefore, $Y_{i-1}$ is a GR submodule of $Y_i$ and
$$\mu(Y_{i-1})\leq \mu(X_j)<\mu(M)=\mu(Y_i)<\mu(X_{j+1})$$
by minimality of $|M|$. This implies $\mu(Y_{i-1})=\mu(X_j)$, since
otherwise $|X_j|>|M|>|X_{j+1}|$, which is a contradiction. Observe
that $i-1\geq 2s$ and $j\geq 2r$.  Then Lemma \ref{2} implies
$\mu(X_t)=\mu(Y_t)$ for all $t\geq r=s$. This contradicts the
assumption $\mu(X_j)<\mu(M)=\mu(Y_i)<\mu(X_{j+1})$. Therefore, there
are no indecomposable regular modules $M$ satisfying
$\mu(X_j)<\mu(M)<\mu(X_{j+1})$ for any $j\geq 2r$.

Now we assume that $M$ is an indecomposable preinjective module such
that $\mu(X_j)<\mu(M)<\mu(X_{j+1})$. Since $\mu(X_r)\geq \mu(H_1)$, so
$X_j$ is a GR submodule of $X_{j+1}$. It follows that $|X_{j+1}|>|M|>|X_j|$
by Proposition \ref{epi}(1).  Let $Y_i$ be a GR submodule of
$M$ for some quasi-simple module $Y$ and $i\geq 1$. Then $Y_i\ncong
X_t$ for any $t>0$ by Proposition \ref{bigprop}(7). Comparing the
lengths, we have $\mu(Y_i)\geq \mu(X_j)$. (Namely, if $\mu(Y_i)<\mu(X_j)<\mu(M)$,
then $|X_j|>|M|$ by Proposition \ref{epi}(1) since $Y_i$ is a GR submodule of $M$.
But on the other hand, $|X_j|<|M|$ by previous discussion.)
 Thus Proposition
\ref{bigprop}(7) implies that
$\mu(X_j)<\mu(Y_{i+1})<\mu(M)<\mu(X_{j+1})$. Therefore, we get an
indecomposable regular module $Y_{i+1}$ with GR measure lying
between $\mu(X_j)$ and $\mu(X_{j+1})$, which is a contradiction.

The proof is completed.
\end{proof}

\subsection{}
Let $X$ be a quasi-simple module of rank $r$ such that $\mu(X_r)\geq
\mu(H_1)$. For a given $i\geq 2r$, let
$\mu_{i,1}>\mu_{i,2}>\ldots>\mu_{i,t_i}$ be all  different GR
measures of the form $\mu_{i,j}=\mu(X_i)\cup\{a_{i,j}\}$ and
$a_{i,j}\neq |X_{i+1}|$ for any $1\leq j\leq t_i$. Notice that there
are only finitely many such $\mu_{i,j}$ for each given $i$.

\begin{lemm}\label{befdp}
\begin{itemize}
   \item[(1)] $a_{i,j}<|X_{i+1}|$ for all $1\leq j\leq t_i$ and $a_{i,j}<a_{i,l}$ if $1\leq j<l<t_i$.
   \item[(2)] $\mu_{i,j}>\mu(X_t)$ for all $1\leq j\leq
              t_i, t\geq 1$.
   \item[(3)] $\mu_{i,j}>\mu_{l,h}$ if $i< l$.
   \item[(4)] If $M$ is an indecomposable module such that $\mu(M)=\mu_{i,j}$, then $M\in\mathcal{I}$.
\end{itemize}
\end{lemm}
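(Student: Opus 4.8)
Throughout write $m(I)=\sum_{a\in I}2^{-a}$ for a finite $I\subseteq\mathbb{N}$ (this is the $\mu(I)$ of the Remark in Section~\ref{GRdef}), so that $I<J\iff m(I)<m(J)$. Since $\mu(X_r)\geq\mu(H_1)$, the maps $X_r\to X_{r+1}\to\cdots$ are GR inclusions (Proposition~\ref{bigprop}(4)b), hence $\mu(X_t)=\mu(X_r)\cup\{|X_{r+1}|,\dots,|X_t|\}$ for $t\geq r$, with every element of $\mu(X_r)$ below $|X_{r+1}|$ and $|X_{u+1}|\geq|X_u|+1$ for all $u$; also, for an indecomposable proper submodule $N\subsetneq M$ one has $\mu(N)<\mu(M)$ (a GR filtration of $N$ is a chain of indecomposable submodules of $M$ ending in $N$, and $|M|\notin\mu(N)$). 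We may and do assume $a_{i,j}\notin\mu(X_i)$. I would first dispatch (1) and (2), which are direct. For (1): by Proposition~\ref{ds} the set $\mu(X_{i+1})=\mu(X_i)\cup\{|X_{i+1}|\}$ is a direct successor of $\mu(X_i)$; as $m(\mu_{i,j})=m(\mu(X_i))+2^{-a_{i,j}}>m(\mu(X_i))$ we get $\mu_{i,j}>\mu(X_i)$, so $a_{i,j}>|X_{i+1}|$ would give $\mu(X_i)<\mu_{i,j}<\mu(X_{i+1})$, impossible; hence $a_{i,j}<|X_{i+1}|$, and since $m(\mu_{i,j})-m(\mu(X_i))=2^{-a_{i,j}}$, the list $\mu_{i,1}>\mu_{i,2}>\cdots$ is exactly $a_{i,1}<a_{i,2}<\cdots$. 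For (2): if $t\leq i$ then $\mu(X_t)\leq\mu(X_i)<\mu_{i,j}$ via $X_t\hookrightarrow X_i$; if $t>i$ then the smallest element occurring in the symmetric difference of $\mu(X_t)=\mu(X_i)\cup\{|X_{i+1}|,\dots,|X_t|\}$ and $\mu_{i,j}=\mu(X_i)\cup\{a_{i,j}\}$ is $a_{i,j}$ (it lies outside $\mu(X_i)$ and, being $<|X_{i+1}|$, differs from every $|X_u|$ with $i<u\leq t$), and $a_{i,j}\in\mu_{i,j}$, so $\mu(X_t)<\mu_{i,j}$.

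The heart of the matter is the intermediate claim
\[(\star)\qquad |X_i|<a_{i,j}<|X_{i+1}|,\]
i.e. the extra element always sits strictly in the gap immediately above $\mu(X_i)$. Granting $(\star)$, statement (3) is a geometric estimate: for $i<l$ write $\mu_{l,h}=\mu(X_i)\cup\{|X_{i+1}|,\dots,|X_l|\}\cup\{a_{l,h}\}$ with $|X_l|<a_{l,h}<|X_{l+1}|$; from $|X_u|\geq|X_{i+1}|+(u-i-1)$ one gets $\sum_{u=i+1}^{l}2^{-|X_u|}\leq 2^{-|X_{i+1}|+1}-2^{-(|X_{i+1}|+l-i-1)}$, while $2^{-a_{l,h}}\leq 2^{-|X_l|-1}<2^{-(|X_{i+1}|+l-i-1)}$, so $\sum_{u=i+1}^{l}2^{-|X_u|}+2^{-a_{l,h}}<2^{-|X_{i+1}|+1}\leq 2^{-a_{i,j}}$ (the last step since $a_{i,j}\leq|X_{i+1}|-1$); hence $m(\mu_{i,j})>m(\mu_{l,h})$.

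For (4), granting $(\star)$: let $M$ be indecomposable with $\mu(M)=\mu_{i,j}$. By (2), $\mu(M)>\mu(X_r)\geq\mu(H_1)$, so $M$ is not a take-off module, hence not preprojective (Proposition~\ref{bigprop}(2)); it remains to rule out $M$ regular. By $(\star)$, $\max\mu_{i,j}=a_{i,j}=|M|$, so a GR submodule $N$ of $M$ has $\mu(N)=\mu(M)\setminus\{a_{i,j}\}=\mu(X_i)$. If $M=Y_p$ with $Y$ quasi-simple of rank $s$, then $\mu(Y_p)>\mu(H_1)$ together with Proposition~\ref{bigprop}(4)a forces $\mu(Y_s)\geq\mu(H_1)$, while $|X_i|\in\mu(Y_p)$ and $|X_i|\geq|X_{2r}|=2|\delta|$ force $p\geq 2s$, so $Y_{p-1}$ is a GR submodule of $Y_p$ and $\mu(Y_{p-1})=\mu(Y_p)\setminus\{|Y_p|\}=\mu(X_i)$; now Lemma~\ref{2}(2) (with $i\geq 2r$) yields $r=s$ and $\mu(X_t)=\mu(Y_t)$ for all $t\geq r$, so $|X_i|=|Y_{p-1}|$ gives $p-1=i$ and $\mu(Y_p)=\mu(Y_{i+1})=\mu(X_{i+1})$, contradicting $a_{i,j}\neq|X_{i+1}|$. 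Thus $M\in\mathcal I$. The plan for $(\star)$ itself is induction on $i\geq 2r$ (the upper bound being (1)): assume $a_{i,j}<|X_i|$ and let $M$ realize $\mu_{i,j}$; then $|M|=|X_i|$, $M$ is not preprojective, and a GR submodule $N$ of $M$ has $\mu(N)=\mu(X_{i-1})\cup\{a_{i,j}\}$. For $i>2r$ this is a measure of type $\mu_{i-1,h}$, so the induction hypothesis gives $|X_{i-1}|<a_{i,j}$, whence $|N|=a_{i,j}$ and a GR submodule $N'$ of $N$ has $\mu(N')=\mu(X_{i-1})$; since $N$ is regular or preinjective (Proposition~\ref{bigprop}(1)), one identifies $\mu(N')=\mu(X_{i-1})$ with a regular module of $X$'s tube via Lemma~\ref{2}(2) and the Corollary following it, and then reaches exactly the contradiction of the previous sentence (a regular module with GR measure $\mu(X_i)$ that is simultaneously of the form $\mu(X_{i-1})\cup\{a_{i,j}\}$, $a_{i,j}\neq|X_i|$), using Proposition~\ref{bigprop}(1)c in the regular case and Proposition~\ref{bigprop}(7) in the preinjective case.

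The routine parts are (1), (2) and the deductions of (3), (4) from $(\star)$; the genuine work is $(\star)$ — showing that the extra element cannot slip \emph{below} $|X_i|$. The inductive step collapses cleanly to the Lemma~\ref{2}(2) argument used for (4), so the main obstacle I expect is the base case $i=2r$ — equivalently, the bottom of the descent, where one must argue directly with modules of length $<2|\delta|$ on a tube of a priori unknown rank, and where the sub-case $a_{i,j}<|\delta|$ (the descent running below quasi-length $r$) must be handled separately, using that a quasi-simple module on an exceptional tube has a preprojective GR submodule (Proposition~\ref{bigprop}(1)b) and Proposition~\ref{bigprop}(2), so that no GR measure $\geq\mu(H_1)$ can survive deleting the top of such a module.
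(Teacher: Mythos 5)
Your parts (1) and (2) are exactly the paper's argument, and your deductions of (3) and (4) from the inequality $|X_i|<a_{i,j}<|X_{i+1}|$ are sound: (3) is a numerically explicit version of the paper's one-line comparison (the paper simply writes $\mu_{l,h}=\mu(X_i)\cup\{|X_{i+1}|,\ldots,|X_l|,a_{l,h}\}<\mu(X_i)\cup\{a_{i,j}\}$, i.e. the smallest element of the symmetric difference is $a_{i,j}$ --- note this, too, silently uses $a_{l,h}>|X_l|$), and in (4) the paper argues slightly differently: writing $M=Y_t$, it gets $\mu(Y_{t-1})=\mu(X_i)<\mu(X_{i+1})<\mu(Y_t)$ and contradicts Proposition \ref{ds} applied in the tube of $Y$, whereas you invoke Lemma \ref{2}(2) to force $\mu(Y_p)=\mu(X_{i+1})$ and contradict $a_{i,j}\neq|X_{i+1}|$. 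Both variants work, and both need exactly the same inputs ($\mu(Y_s)\geq\mu(H_1)$, quasi-length $>2s$, $|M|=a_{i,j}$).

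The one place your write-up is not a complete proof is the claim you yourself call the heart of the matter, the lower bound $a_{i,j}>|X_i|$ in $(\star)$, whose base case $i=2r$ you explicitly leave open. But this is not a gap the paper fills either: it is built into what the paper means by $\mu_{i,j}$. These measures are, by intention, the GR measures that start with $\mu(X_i)$ and have exactly one additional element (they come from indecomposables whose GR filtration passes through a module of measure $\mu(X_i)$, e.g. preinjective modules with GR submodule $X_i$; see the remark after Proposition \ref{dp} and the use made of the $\mu_{i,j}$ in the finiteness theorem), and for such a measure the extra element is the length of the module itself, so $a_{i,j}=|M|>|X_i|$ is automatic. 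The paper's proof never addresses the possibility $a_{i,j}<|X_i|$, and its proof of (4) uses $|M|=a_{i,j}>|X_i|\geq 2|\delta|$ without comment. So under the intended reading your proof is complete and essentially the paper's; under your stricter literal reading of the definition, the missing piece is precisely your unproved base case, and the paper offers no help with it --- the economical fix is simply to adopt the convention that $\mu_{i,j}$ starts with $\mu(X_i)$, rather than to carry out the induction you sketch.
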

\begin{proof}
(1) If $a_{i,j}>|X_{i+1}|$, then
  $$\mu_{i,j}=\mu(X_{i})\cup\{a_{i,j}\}<\mu(X_i)\cup\{|X_{i+1}|\}=\mu(X_{i+1}).$$
This contradicts $\mu(X_{i+1})$ is a direct successor of $\mu(X_i)$
(Proposition \ref{ds}). Thus $a_{i,j}<|X_{i+1}|$.

(2) follows from (1) and the fact that $X_{2r}\subset
X_{2r+1}\subset\ldots\subset X_t\subset X_{t+1} \subset\ldots$ is a
sequence of GR inclusions.

(3) If $i<l$, then
$$\begin{array}{rcl}
\mu_{l,h} &=& \mu(X_{l})\cup\{a_{l,h}\}\\
          &=& \mu(X_{i})\cup\{|X_{i+1}|,\ldots,|X_l|,a_{l,h}\}\\
          &<& \mu(X_{i})\cup\{a_{i,j}\}\\
          &=& \mu_{i,j}
\end{array}$$

(4) If $M$ is not preinjective, then $M$ is regular, say $M=Y_t$ for
some quasi-simple module $Y$ of rank $s$. Thus $t> 2s$ since
$|M|>|X_i|\geq 2|\delta|$, and $\mu(Y_s)\geq \mu(H_1)$. In particular,
$Y_{t-1}$ is a GR submodule of $Y_t$ and
$\mu(Y_{t-1})=\mu(X_i)<\mu(X_{i+1})<\mu(M)=\mu(Y_t)$. This is a
contradiction since $\mu(Y_t)$ is also a direct successor of
$\mu(Y_{t-1})$.
\end{proof}

\begin{prop}\label{dp} The sequence of GR measures
$$\ldots<\mu_{i+1,2}<\mu_{i+1,1}<
\mu_{i,t_i}<\ldots<\mu_{i,j+1}<\mu_{i,j}<\ldots<\mu_{i,2}<\mu_{i,1}$$
is a sequences of direct predecessors.
\end{prop}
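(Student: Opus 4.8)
The plan is to show that each member of the displayed chain is the direct successor of the next one, that is: $\mu_{i,j+1}$ is the direct successor of $\mu_{i,j+1}$... more precisely that $\mu_{i,j+1}$ has direct successor $\mu_{i,j}$ for $1\le j<t_i$, that $\mu_{i+1,1}$ has direct successor $\mu_{i,t_i}$, and (to close the gap at the top) that $\mu_{i,1}$ has direct successor $\mu(X_{i+1})$-related data only in the degenerate sense — actually what we must prove is exactly that no GR measure lies strictly between two consecutive entries of the chain. So fix two consecutive entries $\nu'<\nu$ in the list and suppose some GR measure $\mu(M)$ satisfies $\nu'<\mu(M)<\nu$; I must derive a contradiction. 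By Lemma \ref{befdp}(4), in the relevant range any indecomposable realizing one of the $\mu_{i,j}$ is preinjective, and I expect the same localization argument to force $M$ to be preinjective as well, since $|M|$ will be forced to exceed $2|\delta|$ and $\mu(M)>\mu(X_t)$ for all $t$ by Lemma \ref{befdp}(2) together with transitivity of the order.

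**The two cases.** First, the ``same $i$'' case: $\nu=\mu_{i,j}$ and $\nu'=\mu_{i,j+1}$. Both start with $\mu(X_i)$, so I would argue that any $\mu(M)$ with $\mu_{i,j+1}<\mu(M)<\mu_{i,j}$ must also start with $\mu(X_i)$: indeed $\mu(M)>\mu(X_i)$ by Lemma \ref{befdp}(2), and if $\mu(M)$ did not start with $\mu(X_i)$ the smallest element of the symmetric difference would sit below $a_{i,j+1}=\min$ of the tail, contradicting $\mu(M)>\mu_{i,j+1}$; this uses Lemma \ref{befdp}(1). Hence $\mu(M)=\mu(X_i)\cup\{a\}\cup(\text{higher stuff})$, and since $\mu(M)<\mu_{i,j}=\mu(X_i)\cup\{a_{i,j}\}$ and $\mu(M)>\mu_{i,j+1}=\mu(X_i)\cup\{a_{i,j+1}\}$ with $a_{i,j+1}<a_{i,j}$, the next element $a$ must satisfy $a_{i,j+1}\le a\le a_{i,j}$; the list $\mu_{i,1}>\dots>\mu_{i,t_i}$ being exhaustive among measures of the form $\mu(X_i)\cup\{a\}$ with $a\ne|X_{i+1}|$ then forces $a=|X_{i+1}|$, i.e. $\mu(M)$ starts with $\mu(X_{i+1})$; but then Lemma \ref{befdp}(3) (applied with $l=i+1$) gives $\mu(M)\le\mu(X_{i+1})\le\mu_{i+1,h}<\mu_{i,j+1}$, a contradiction. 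Second, the ``boundary'' case $\nu=\mu_{i,t_i}$, $\nu'=\mu_{i+1,1}$: here $\mu_{i,t_i}=\mu(X_i)\cup\{a_{i,t_i}\}$ with $a_{i,t_i}<|X_{i+1}|$, and I claim a GR measure strictly in between must start with $\mu(X_{i+1})$ (again using (1) and (2) to pin down the initial segment and exclude anything with next element $<|X_{i+1}|$ other than the $a_{i,t_i}$ possibilities), hence equals $\mu(X_{i+1})$ or a $\mu_{i+1,h}$; Proposition \ref{ds} (that $\mu(X_{i+1})$ is the direct successor of $\mu(X_i)$) handles $\mu(X_{i+1})$ itself, and $\mu_{i+1,h}$ is impossible because by (3) all of them are $<\mu_{i,t_i}$... wait, that is the wrong direction, so more care is needed: I would instead note $\mu_{i+1,1}$ is the \emph{largest} of the $\mu_{i+1,h}$ and is $<\mu_{i,t_i}$, so nothing of the form $\mu_{i+1,h}$ lies above $\mu_{i+1,1}$, finishing the case.

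**The main obstacle.** The delicate point is the combinatorial bookkeeping showing that a GR measure strictly between two consecutive entries must start with either $\mu(X_i)$ or $\mu(X_{i+1})$, and in the ``start with $\mu(X_i)$'' situation that its next element lies in the finite set $\{a_{i,1},\dots,a_{i,t_i},|X_{i+1}|\}$ — which is immediate from the \emph{definition} of the $\mu_{i,j}$ as \emph{all} GR measures of that shape, provided one first knows the candidate measure really is of that shape. Getting ``of that shape'' requires knowing $\mu(M)$ starts with $\mu(X_i)$, which I would extract from Lemma \ref{befdp}(1)--(2) plus the order axiom on $\mathcal P(\mathbb N)$. The one genuinely substantive input beyond this bookkeeping is Proposition \ref{ds}, used to know $\mu(X_{i+1})$ is the direct successor of $\mu(X_i)$ so that nothing hides in the interval $(\mu(X_i),\mu(X_{i+1}))$; everything else is an order-theoretic verification that the listed chain has no gaps, together with Lemma \ref{befdp}(3) to compare across different indices $i$. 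I do not expect to need any new representation-theoretic fact.
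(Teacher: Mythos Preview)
Your combinatorial analysis is on the right track, but the argument has a genuine gap at the crucial step, and your closing remark that ``I do not expect to need any new representation-theoretic fact'' is precisely where things go wrong.

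Consider the first case, $\mu_{i,j+1}<\mu(M)<\mu_{i,j}$. First a minor point: by Lemma~\ref{befdp}(1) one has $a_{i,j}<a_{i,j+1}$, not the reverse as you wrote. More importantly, writing $\mu(M)=\mu(X_i)\cup\{b_1,b_2,\dots,b_m\}$, the order constraints give $a_{i,j}<b_1\le a_{i,j+1}$, and exhaustiveness of the list $\{a_{i,1},\dots,a_{i,t_i},|X_{i+1}|\}$ then forces $b_1=a_{i,j+1}$ (note $|X_{i+1}|>a_{i,j+1}$, so it is \emph{not} in range). Your claimed conclusion ``$a=|X_{i+1}|$'' is therefore incorrect: the endpoint $b_1=a_{i,j+1}$ is perfectly compatible with $\mu(M)>\mu_{i,j+1}$, provided $m\ge 2$. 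So order theory alone leaves you with $\mu(M)$ properly starting with $\mu_{i,j+1}$, and you have no mechanism to exclude this. The same issue arises in the boundary case: you end up with $\mu(M)$ properly starting with $\mu_{i+1,1}$, and your argument that ``$\mu(M)$ equals $\mu(X_{i+1})$ or some $\mu_{i+1,h}$'' is unjustified --- $\mu(M)$ need not \emph{equal} any of these, only \emph{start with} one of them.

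The missing ingredient is representation-theoretic and specific to type $\widetilde{\mathbb{A}}_n$. Once $\mu(M)$ starts with $\mu_{i,j+1}$, the GR filtration of $M$ contains an indecomposable submodule $N$ with $\mu(N)=\mu_{i,j+1}$; by Lemma~\ref{befdp}(4) this $N$ is preinjective. The paper then invokes the fact that an indecomposable preinjective module over $\widetilde{\mathbb{A}}_n$ cannot be a proper submodule of any indecomposable module (this follows, for instance, from the observation that every indecomposable preinjective has defect exactly $1$, so a short exact sequence $0\to N\to M\to M/N\to 0$ with $N,M$ preinjective would force $M/N$ to have defect $0$, yet every indecomposable quotient of $M$ is preinjective). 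This yields the contradiction. Without this step the proof does not close; indeed, for the other tame types the analogous proposition fails and must be weakened to ``only finitely many GR measures lie between consecutive entries'', precisely because preinjectives there \emph{can} sit properly inside other indecomposables.
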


\begin{proof}
Let $M$ be an indecomposable module such that
$$\mu(X_i)\cup\{a_{i,j+1}\}=\mu_{i,j+1}<\mu(M)<\mu_{i,j}=\mu(X_i)\cup\{a_{i,j}\}.$$
Then $\mu(M)=\mu(X_i)\cup\{b_1,b_2,\ldots,b_m\}$ with
$a_{i,j}<b_1\leq a_{i,j+1}$. By the choices of
$\mu_{i,j}$, we have $m\geq 2$ and  $b_1=a_{i,j+1}$. This implies
$M$ contains a submodule $N$ with
$\mu(N)=\mu(X_i)\cup\{a_{i,j+1}\}$, which is thus a preinjective
module by above lemma. However, an indecomposable preinjective
module can not be a submodule of any other indecomposable module.
We therefore get a contradiction.

Now let $M$ be an indecomposable module such that
$$\mu(X_i)<\mu(X_i)\cup\{|X_{i+1}|,a_{i+1,1}\}=\mu_{i+1,1}<\mu(M)<\mu_{i,t_i}=\mu(X_i)\cup\{a_{i,t_i}\}.$$
It follows that $\mu(M)=\mu(X_i)\cup\{b_1,b_2,\ldots,b_m\}$. By
definition of $\mu_{i,t_i}$, we have
$b_1=|X_{i+1}|<a_{i+1,1}<|X_{i+2}|$ and $m\geq 2$. From $b_2\leq
a_{i+1,1}$ and the definition of $\mu_{i+1,1}$, we obtain that
$b_2=a_{i+1,1}$ and $m\geq 3$. Therefore, $M$ contains an
indecomposable preinjective module $N$ with GR measure
$\mu(X_i)\cup\{|X_{i+1}|,a_{i+1,1}\}$ as a submodule, which is
impossible.

The proof is completed.
\end{proof}

\begin{remark} We should note that some segments of the sequence of
the GR measures in this proposition may not exist. In this case, we
can still show as in the proof that for example, $\mu_{j,1}$ is a
direct predecessor of $\mu_{i,t_i}$ for some $j\geq i+2$.
\end{remark}

\begin{remark} Assume that these $\mu_{i,j}$ constructed above are not
landing measures (For example,  $X$ is a homogeneous simple module
$H_1$. See Section \ref{precen}). Since each GR measure different
from $I^1$ has a direct successor, We may construct direct
successors starting from $\mu_{i,1}$ for a fixed $i$.  Let $\mu(M)$
be the direct successor of $\mu_{i,1}$. If $M$ is preinjective, then
$|M|<|\mu_{i,1}|=a_{i,1}$ by Proposition \ref{prepre}. Thus after
taking finitely many  direct successors, we obtain a  regular
measure (meaning that it is a GR measure of an indecomposable
regular module). Proposition \ref{prepre} tells that all direct
successors starting with this regular measure are still regular
ones. One the other direction, if there are infinitely many
preinjective modules containing some $X_i$, $i\geq 2r$ as GR submodules, then the
sequence $\mu_{i,j}$ is infinite (This does occur in some case. See
Section \ref{precen}). Thus we obtain a sequence of GR measures
indexed by integers $\mathbb{Z}$.

\end{remark}

\subsection{}
We fix a tame quiver $Q$ of type $\widetilde{\mathbb{A}}_n$.
There are always GR measures having no direct predecessors, for
example, $\mu(H_1)$ (Proposition \ref{H1}).  We are going to  show
that  the number of GR measures possessing no direct predecessors is
always finite.

\begin{lemm} Let $X$ be a quasi-simple module of rank $r>1$.  Assume
that there is an $i\geq 1$ such that $X_i\in\mathcal{C}$ is a
central module. Then there is an $i_0\geq i$ such that
$\mu(X_{j+1})$ is a direct successor of $\mu(X_j)$ for each $j\geq
i_0$.
\end{lemm}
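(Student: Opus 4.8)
The plan is to reduce to the situation governed by Proposition \ref{ds}, i.e.\ to show that eventually $\mu(X_j)\geq\mu(H_1)$ for $j$ large, after which the claim follows immediately with $i_0$ any index $\geq 2r$ past that point. So the first step is to understand the two cases from Proposition \ref{bigprop}(4): either $\mu(X_r)\geq\mu(H_1)$, in which case we are already done by Proposition \ref{ds} (take $i_0=2r$), or $\mu(X_r)<\mu(H_1)$. In the latter case Proposition \ref{bigprop}(4)a gives $\mu(X_i)<\mu(H_j)$ for \emph{all} $i,j\geq 1$, so the regular modules $X_i$ never climb past $\mu(H_1)$. I must therefore rule out this case under the hypothesis that some $X_i$ is central; the expected conclusion is that this case simply cannot occur, so the lemma really only has content via Proposition \ref{ds}.

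So the heart of the argument is: if $\mu(X_r)<\mu(H_1)$ then no $X_i$ is central. Here I would argue that $\mu(X_i)<\mu(H_1)$ for all $i$, and that in fact $\mu(X_i)$ is bounded above by something in the take-off region. More precisely, since $X$ has rank $r>1$, by Proposition \ref{bigprop}(5) there is \emph{another} quasi-simple module $X'$ on the same tube with $\mu(X'_r)\geq\mu(H_1)$; but that is about a different quasi-simple, so instead the cleaner route is: the GR submodule $T$ of $X_i$ is, by Proposition \ref{bigprop}(1)c, either preprojective or $X_{i-1}$. Following the GR filtration of $X_i$ downward, one reaches $X_r$ (or lands in a preprojective earlier), and a GR submodule of $X_r$ is preprojective by Proposition \ref{bigprop}(1)b. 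Thus the entire GR filtration of $X_i$ consists of $X_{i-1}\supset X_{i-2}\supset\dots\supset X_r\supset(\text{preprojective chain})$, whence $\mu(X_i)$ starts with $\mu(X_r)$, which starts with the GR measure of a preprojective module. Since all preprojective modules are take-off modules with GR measure $<\mu(H_1)$ (Proposition \ref{bigprop}(2)), and since take-off modules are characterized by their GR measures lying in the take-off part which is an initial segment closed under ``starts with'', I would conclude $X_i$ is a take-off module, contradicting $X_i\in\mathcal{C}$. (The point that ``$\mu$ starts with a take-off measure $\Rightarrow$ $\mu$ is a take-off measure'' is a basic property of the take-off part from \cite{R1}, which I would invoke; if one prefers to stay purely inside the excerpt, one can instead observe $\mu(X_i)<\mu(H_1)$ and that the take-off/central boundary sits at or above $\mu(H_1)$ by Proposition \ref{bigprop}(3), so $X_i$ central would force $\mu(X_i)\geq\mu(H_1)$, a contradiction.)

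Having excluded $\mu(X_r)<\mu(H_1)$, we are in the case $\mu(X_r)\geq\mu(H_1)$, and Proposition \ref{ds} says $\mu(X_{j+1})$ is a direct successor of $\mu(X_j)$ for every $j\geq 2r$; setting $i_0=\max(i,2r)$ finishes the proof. The main obstacle I anticipate is making the ``starts with a take-off measure implies take-off'' step airtight using only the tools quoted in the excerpt; the safe fallback, as indicated above, is to run everything through the numerical comparison $\mu(X_i)<\mu(H_1)$ together with Proposition \ref{bigprop}(3), which pins the take-off/central threshold at $\mu(H_1)$ from below and thus directly contradicts centrality of $X_i$. Everything else is bookkeeping with GR filtrations and the known case division of Proposition \ref{bigprop}(4).
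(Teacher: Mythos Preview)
Your proposal has a genuine gap: the claim that ``$\mu(X_r)<\mu(H_1)$ implies no $X_i$ is central'' is not justified, and in fact the paper treats this as a live case rather than a vacuous one.

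Both of your proposed justifications fail. First, ``$\mu$ starts with a take-off measure $\Rightarrow$ $\mu$ is take-off'' is false: $\mu(H_1)$ itself starts with the measure of its preprojective GR submodule (take-off by Proposition~\ref{bigprop}(2)), yet $\mu(H_1)$ is central. What is true is the converse direction (take-off measures form an initial segment under $<$), which does not help here. Second, Proposition~\ref{bigprop}(3) only asserts that $\mu(H_1)$ is a central measure; it does \emph{not} say $\mu(H_1)$ is the smallest central measure, so knowing $\mu(X_i)<\mu(H_1)$ does not force $X_i$ to be take-off. The take-off measures could accumulate at some value strictly below $\mu(H_1)$, leaving room for central measures in between --- and indeed the theorem immediately following this lemma explicitly separates ``every $X_i$ is take-off'' from ``some $X_j$ are central'' as two distinct sub-cases under $\mu(X_r)<\mu(H_1)$.

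The paper's proof works inside the case $\mu(X_r)<\mu(H_1)$ with $X_i$ central. It first observes that centrality of $X_i$ forces $X_j$ to be the GR submodule of $X_{j+1}$ for all $j\geq i$. Then it finds $j_0$ in two steps: (i) if a regular $Y_l$ has $\mu(X_j)<\mu(Y_l)<\mu(X_{j+1})$, one shows all $\mu(Y_h)<\mu(X_{j+1})$, and since only finitely many exceptional quasi-simples $Z$ satisfy $\mu(Z_{r_Z})\leq\mu(H_1)$, increasing $j$ eventually clears out all regular obstructions; (ii) with $j_0$ so chosen, a preinjective $M$ with $\mu(X_j)<\mu(M)<\mu(X_{j+1})$ would, via its GR filtration and Proposition~\ref{bigprop}(7), produce a regular module strictly between $\mu(X_j)$ and $\mu(X_{j+1})$, contradicting the choice of $j_0$. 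You should adopt this line of argument rather than attempting to rule the case out.
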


\begin{proof}By Proposition \ref{ds}, we may assume that $\mu(X_r)<\mu(H_1)$.
Since $X_i$ is a central module,  $X_j$ is the unique, up to
isomorphism, GR submodule of $X_{j+1}$ for every $j\geq i$. We first
show that there is a $j_0$ such that there does not exist a regular
module with GR measure $\mu$ satisfying $\mu(X_j)<\mu<\mu(X_{j+1})$
for any $j\geq j_0$.

Let $Y$ be a quasi-simple module module of rank $s$ such that
$\mu(X_j)<\mu(Y_l)<\mu(X_{j+1})$ for some $j\geq i\geq r$ and $l\geq
1$. In this case, $Y_l$ is a GR submodule  of $Y_{l+1}$ since $Y_l$
is a central module and thus $\mu(Y_l)>\mu(T)$ for all preprojective module $T$.
Comparing the lengths, we have
$\mu(Y_{l+1})<\mu(X_{j+1})$, and similarly $\mu(Y_h)<\mu(X_{j+1})$
for all $h\geq 1$. Now replace $j$ by some $j'>j$ and repeat the
above consideration. Since there are only finitely many quasi-simple
modules $Z$ such that $\mu(Z_{r_Z})\leq \mu(H_1)$, where $r_Z$ is
the rank of $Z$, we may obtain an index $j_0$ such that a GR measure
$\mu$ of an indecomposable regular module satisfies either
$\mu<\mu(X_{j_0})$ or $\mu>\mu(X_j)$ for all $j\geq 1$.

Fix the above chosen $j_0$.  Assume that there is an indecomposable
preinjective module $M$ such that $\mu(X_j)<\mu(M)<\mu(X_{j+1})$ for
some $j\geq j_0$. Then $\mu(M)$ starts with $\mu(X_j)$ and thus
there is an indecomposable submodule $N$ of $M$ in a GR filtration
of $M$ such that $\mu(N)=\mu(X_j)$. Note that $N$ is a regular
module and thus $N=Y_l$ for some $l\geq 1$. If $X_j\cong N$, then
$\mu(M)>\mu(X_j)$ for all $j\geq 0$, a contradiction. Therefore,
$X_j\ncong N$. It follows that
$\mu(X_j)=\mu(N)<\mu(Y_{l+1})<\mu(M)<\mu(X_{j+1})$, which
contradicts the choice of $j_0$. We can finish the proof by taking
$i_0=j_0$.
\end{proof}

\begin{theo}Let $Q$ be a tame quiver of type $\widetilde{\mathbb{A}}_n$, $n\geq 1$. Then
only finitely many GR measures have no direct
predecessors.
\end{theo}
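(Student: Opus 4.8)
The strategy is to classify all indecomposable modules $M$ whose GR measure $\mu(M)$ could conceivably fail to have a direct predecessor, and show this pool is finite up to the relevant equivalences. Recall from the introductory discussion that a GR measure with no direct predecessor is necessarily central; so it suffices to bound the central measures lacking a direct predecessor. By Proposition \ref{bigprop}(2), preprojective modules are take-off modules, so they contribute nothing. This leaves two cases to handle: regular modules and preinjective modules in the central part.

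First I would dispose of the regular modules. If $M = X_i$ for a quasi-simple $X$ of rank $r$ with $\mu(X_r) \geq \mu(H_1)$, then by Proposition \ref{ds} $\mu(X_{j+1})$ is a direct successor of $\mu(X_j)$ for every $j \geq 2r$, so $\mu(X_i)$ has a direct predecessor whenever $i > 2r$; only the finitely many $X_i$ with $i \leq 2r$ (over the finitely many exceptional tubes) could be exceptions, and the homogeneous series $H_j$ is handled analogously since $\mu(H_{j+1})$ is a direct successor of $\mu(H_j)$ for all $j \geq 1$ (cited in Section 4.3), leaving only $\mu(H_1)$ from that series. If instead $\mu(X_r) < \mu(H_1)$ and $X_i$ is central for some $i$, the last Lemma of the excerpt produces an index $i_0$ such that $\mu(X_{j+1})$ is a direct successor of $\mu(X_j)$ for all $j \geq i_0$; again only finitely many $X_i$ with $i < i_0$ remain, for each of the finitely many exceptional tubes. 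So only finitely many regular measures can lack a direct predecessor.

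Next, the preinjective central modules $M \in \mathcal{I}\setminus\mathcal{T}$. Here the key is Proposition \ref{prepre}: if $\mu(N)$ is the direct predecessor of $\mu(M)$, then $N \in \mathcal{I}$ with $|N| > |M|$. Turning this around, I want to argue that a central preinjective $\mu(M)$ with no direct predecessor forces $M$ to be "small". Take a GR submodule $X_i$ of $M$ (regular, by Proposition \ref{bigprop}(1)d); by Proposition \ref{bigprop}(7), $\mu(M) > \mu(X_t)$ for all $t$. The candidate predecessors below $\mu(M)$ and above $\mu(X_i)$ are exactly the measures $\mu_{i,j} = \mu(X_i)\cup\{a_{i,j}\}$ studied in Lemma \ref{befdp} and Proposition \ref{dp}, together with the measures $\mu(X_i)$ themselves — and Proposition \ref{dp} shows these form a chain of direct predecessors, with any $\mu_{i,j}$ realized by a preinjective module. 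So if $\mu(M) = \mu_{i,j}$ for some exceptional $X$ with $i \geq 2r$ (or the analogous homogeneous situation), it has a direct predecessor inside that chain; the remaining possibility is that $\mu(M)$ equals one of the bottom elements $\mu(X_i)$ of such a chain, or that $i$ is small. Either way, using $|N| > |M|$ from Proposition \ref{prepre} and the fact that via Proposition \ref{epi}(2) the GR submodule determines $|M|$ up to a bounded range (the remark on $gr(M)\le 3$ notes $|M| < 2|\delta|$ in the three-submodule case), one controls $|M|$: a central preinjective module with $\mu(M)$ having no direct predecessor must have bounded length, hence there are only finitely many.

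**Main obstacle.** The delicate point is the preinjective case: showing that the central preinjective measures without direct predecessors are bounded in length. One must carefully separate which preinjective $M$ arise as the bottom of an infinite descending chain $\mu_{i,j}$ (where every member does have a predecessor by Proposition \ref{dp}) from genuine "first" central measures. The remark following Proposition \ref{dp} — that when the chain is infinite one gets a $\mathbb{Z}$-indexed family — signals precisely that these chains, though possibly infinite, still contribute no elements without predecessors except possibly at the regular end, which is already covered. Assembling this into "only finitely many $M$" requires combining Proposition \ref{prepre} (predecessors of central preinjectives are longer preinjectives, so chains of predecessors in the preinjective range are finite upward), Lemma \ref{onemap} (at most one regular GR submodule per tube, so the data $(X,i)$ attached to $M$ is finite per length), and a length bound; the bookkeeping across the finitely many exceptional tubes plus the homogeneous series is where the real work lies, but none of it is more than finite case-analysis once the structural inputs above are in place.
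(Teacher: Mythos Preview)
Your overall architecture matches the paper's: reduce to central measures, split into regular and preinjective, and in each case show that all but finitely many modules have a direct predecessor by invoking Proposition~\ref{ds} and the preceding Lemma (regular case) and Proposition~\ref{dp} (preinjective case). The regular case is correct and essentially identical to the paper's argument.

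The preinjective case, however, has a gap. You correctly note that if $M\in\mathcal I$ has GR submodule $X_i$ with $i\ge 2r$ and $\mu(X_r)\ge\mu(H_1)$, then $\mu(M)=\mu(X_i)\cup\{|M|\}$ is one of the $\mu_{i,j}$ of Lemma~\ref{befdp}. But Proposition~\ref{dp} only furnishes a direct predecessor of $\mu_{i,j}$ provided the chain continues \emph{below} it, i.e.\ some $\mu_{i',j'}<\mu(M)$ actually exists. Your ``remaining possibility that $\mu(M)$ equals one of the bottom elements $\mu(X_i)$'' cannot occur (a preinjective $M$ never has $\mu(M)=\mu(X_i)$ for $i\ge 2r$; cf.\ the Corollary after Lemma~\ref{2}), and invoking Proposition~\ref{prepre} is circular here: that result presupposes a predecessor and describes it, so it cannot be used to produce one. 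Two things are missing. First, for $|M|>2|\delta|$ one must verify $\mu(X_r)\ge\mu(H_1)$ (else $\mu(X_i)<\mu(H_1)<\mu(M)$ would force $|H_1|>|M|$), so that the $\mu_{i,j}$ machinery applies at all. Second, and this is the real point, one needs a pigeonhole over the finitely many quasi-simples: if no measure below $\mu(M)$ starts with $\mu(X_i)$, pass to a strictly longer preinjective $M'$ whose GR submodule lies on a different ray; since there are only finitely many quasi-simples, some ray $Z$ must carry infinitely many such preinjectives, making its $\mu_{i,j}$ chain infinite and giving every large $M$ on that ray a predecessor via Proposition~\ref{dp}. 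Your closing remark about ``bookkeeping across the finitely many tubes'' gestures toward this, but the argument you actually wrote does not supply it.
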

\begin{proof}
We first show that only finitely many GR measures of regular modules have no direct predecessors.
Let $X$ be a quasi-simple module of rank $r>1$. If
$\mu(X_r)\geq\mu(H_1)$, then for every $i>2r$, $\mu(X_i)$ has a
direct predecessor $\mu(X_{i-1})$ (Proposition \ref{ds}). Thus we
may assume that $\mu(X_r)<\mu(H_1)$. If every $X_i$ is a take-off
module, then $\mu(X_i)$ has direct predecessor by definition.  If
there is an index $i\geq 1$ such that $X_j$ are central modules for
all $j\geq i$, then there is an index $i_0\geq i$ such that
$\mu(X_j)$ is a direct predecessor of $\mu(X_{j+1})$ for every
$j\geq i_0$. Therefore, there are only finitely many GR measures of
indecomposable regular modules having no direct predecessor.

Now it is sufficient to show that all but
finitely many GR measures of preinjective modules have no direct
predecessors. Let $M$ be an indecomposable preinjective module.
Since there are only finitely many isomorphism classes of
indecomposable preinjective modules with length smaller than
$2|\delta|$, we may assume that $|M|>2|\delta|$. Thus a GR submodule
of $M$ is $X_i$ for some quasi-simple $X$ of rank $r\geq 1$ and some
$i\geq 2r$. Notice that $\mu(X_r)\geq \mu(H_1)$, since, otherwise,
$\mu(X_i)<\mu(H_1)<\mu(M)$ would imply $|H_1|>|M|$, which is impossible.
Without loss of generality,
we may also assume that there are GR measures $\mu$ starting with
$\mu(X_{i})$ and $\mu<\mu(M)$. (Namely, if such a $\mu$ does not
exist, we may replace $M$  by an indecomposable preinjective module
$M'$ with $|M'|>|M|+|\delta|$. Then the GR submodule of $M'$ is
$Y_{i'}$ with $Y\ncong X$. By this way, we may finally find an
integer $d$ such that all indecomposable preinjective modules with
length greater than $d$  contain $Z_l,l\geq 2r_Z$ as GR submodules
for some fixed quasi-simple module $Z$. Thus there are infinitely
many indecomposable preinjective modules with  GR measures starting
with $\mu(Z_l),l\geq 2r_Z$.) Then Proposition \ref{dp} ensures the
existence of the direct predecessor of $\mu(M)$.
\end{proof}

\subsection{Tame quivers}
After showing that  for a tame quiver of type
$\widetilde{\mathbb{A}}_n$, there are only finitely many GR measures
having no direct predecessors, we realized that the fact is also true for
any tame quiver, i.e. a quiver of type $\widetilde{\mathbb{D}}_n$,
$\widetilde{\mathbb{E}}_6$, $\widetilde{\mathbb{E}}_7$ or
$\widetilde{\mathbb{E}}_8$.  We outline the proof of this fact in the following.

\begin{theo}\label{tame}Let $\Lambda$ be a tame quiver.
Then there are only finitely many GR measures having no direct
predecessors.
\end{theo}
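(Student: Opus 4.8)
The plan is to mimic the proof just given for type $\widetilde{\mathbb{A}}_n$, replacing each type-$\widetilde{\mathbb{A}}_n$-specific input by its general-tame counterpart. The structure of the argument has three layers: (i) preprojective GR measures are take-off measures and hence have direct predecessors automatically; (ii) all but finitely many regular GR measures have direct predecessors; (iii) all but finitely many preinjective GR measures have direct predecessors. For layer (i) nothing changes, since by construction take-off measures always have direct predecessors. The serious work is layers (ii) and (iii), and the whole argument rests on having, for each stable tube of rank $r>1$, a quasi-simple module $X$ with $\mu(X_r)\ge\mu(H_1)$, together with the analogues of Proposition \ref{bigprop}(4), (7) and Lemmas \ref{2}--\ref{befdp}, Propositions \ref{ds}--\ref{dp}.

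For layer (ii), I would first invoke the remark after Proposition \ref{bigprop}: statements (2), (4), (7) and the first part of (3) hold for all tame quivers by \cite{Ch4}, and statement (5) holds for all tame quivers except possibly $\widetilde{\mathbb{E}}_8$ by \cite{Ch5}. So for the non-$\widetilde{\mathbb{E}}_8$ types, each tube of rank $r>1$ contains a quasi-simple $X$ with $\mu(X_r)\ge\mu(H_1)$, and Proposition \ref{bigprop}(4)b then gives that $X_j\subset X_{j+1}$ is a GR inclusion for all $j\ge r$; the analogue of Lemma \ref{2} (comparing two such ``large'' quasi-simples via the equality $|X_{r+l}|-|X_{r+l-1}|=|Y_{s+l}|-|Y_{s+l-1}|$, which is still valid because both increments equal $\delta_{\text{(relevant vertex)}}$ read off the periodic pattern of the tube) yields the analogue of Proposition \ref{ds}: $\mu(X_{j+1})$ is a direct successor of $\mu(X_j)$ for $j\ge 2r$. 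For the remaining tubes, either all $X_i$ are take-off modules (done), or eventually all $X_j$ are central, and then the Lemma before the $\widetilde{\mathbb{A}}_n$ theorem — whose proof uses only Proposition \ref{bigprop}(4)/(7) and the finiteness of the set of quasi-simples with $\mu(Z_{r_Z})\le\mu(H_1)$ — gives an index $i_0$ beyond which $\mu(X_{j+1})$ is a direct successor of $\mu(X_j)$. Since there are only finitely many tubes of rank $>1$ and, within each, only finitely many exceptional quasi-length-$\le 2r$ modules plus the finitely many ``small'' quasi-simples, only finitely many regular GR measures can fail to have a direct predecessor.

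For layer (iii), I would run the argument of the final $\widetilde{\mathbb{A}}_n$ proof verbatim: an indecomposable preinjective $M$ with $|M|>2|\delta|$ has a GR submodule $X_i$ (Proposition \ref{bigprop}(1)d, or its general analogue — note (1)d is the one part I must be careful about, see below), and necessarily $\mu(X_r)\ge\mu(H_1)$ since otherwise $\mu(X_i)<\mu(H_1)<\mu(M)$ forces $|H_1|>|M|$, absurd. After possibly enlarging $M$ to some $M'$ with $|M'|>|M|+|\delta|$ finitely many times, one reaches a quasi-simple $Z$ such that all sufficiently long preinjectives have GR submodule of the form $Z_l$, $l\ge 2r_Z$; then the analogue of Proposition \ref{dp} — built from Lemmas \ref{befdp} and the direct-successor statement of layer (ii) — exhibits $\mu(M)$ as sitting in a sequence of direct predecessors, hence having a direct predecessor. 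The main obstacle, and the place where $\widetilde{\mathbb{A}}_n$-specific reasoning was used, is the statement Proposition \ref{bigprop}(1)d, ``a GR submodule of a preinjective is regular,'' and more broadly Lemma \ref{onemap}; for general tame quivers $\delta_\nu$ need not be $1$, so a preinjective may have a preprojective GR submodule and may have several GR submodules in one tube. I would handle this by observing that for $|M|>2|\delta|$ a preprojective GR submodule is impossible on length grounds (Proposition \ref{epi}(2) gives an epi $Y\twoheadrightarrow M$ from an indecomposable $Y$ just above the GR submodule, and the preprojective component has only finitely many modules of bounded length, all much shorter than $M$) — in fact this is exactly the general form of Proposition \ref{bigprop}(7)/(2) which is cited as holding in \cite{Ch4} — so the GR submodule of a long preinjective is forced to be a ``large'' regular module $X_i$ with $i\ge 2r$ as needed, and the non-uniqueness within a tube is irrelevant since we only need \emph{one} such submodule to run Proposition \ref{dp}. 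For the $\widetilde{\mathbb{E}}_8$ type, where Proposition \ref{bigprop}(5) is not known, one either restricts the statement or notes that the finitely-many-exceptional-tubes bookkeeping still goes through provided each exceptional tube eventually produces a direct-successor chain, which follows from the central-module lemma once one knows $X_i$ becomes central — and if some exceptional tube never leaves the take-off part there is nothing to prove. This completes the outline.
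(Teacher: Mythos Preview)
Your overall three-layer structure matches the paper's, and you correctly identify that the ingredients (2), (4), (7) of Proposition~\ref{bigprop} carry over via \cite{Ch4}, so that Lemma~\ref{2} and the regular half of Proposition~\ref{ds} go through. But there are two genuine gaps in your transfer of the preinjective arguments.

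First, and most seriously, you propose to run ``the analogue of Proposition~\ref{dp}'' without change. The proof of Proposition~\ref{dp} in the $\widetilde{\mathbb{A}}_n$ case hinges on the fact that an indecomposable preinjective module cannot be a proper submodule of any indecomposable module: once the hypothetical $M$ with $\mu_{i,j+1}<\mu(M)<\mu_{i,j}$ is shown to contain a preinjective $N$ in its GR filtration, one concludes immediately. For the other tame types this fails---a preinjective module can embed properly into another indecomposable (preinjective) module. The paper handles this by \emph{weakening} Proposition~\ref{dp}: instead of showing the $\mu_{i,j}$ form a chain of direct predecessors, one shows only that there are \emph{finitely many} GR measures between consecutive $\mu_{i,j}$'s, using that only finitely many indecomposables contain a given preinjective as a submodule. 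This still suffices for the conclusion, but it is a real change that your outline does not account for.

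Second, in the preinjective half of Proposition~\ref{ds} you only rule out the possibility that the GR submodule $N$ of $M$ is preprojective. You do not treat the case that $N$ is itself preinjective, which is exactly the new phenomenon outside type $\widetilde{\mathbb{A}}_n$. The paper supplies a separate argument here: if $N$ is preinjective with $\mu(N)=\mu(X_i)$ (forced by minimality of $|M|$), then a GR filtration of $N$ contains some regular $Z_{2t}$ with $\mu(Z_{2t})=\mu(X_{2r})$, and Lemma~\ref{2} together with Proposition~\ref{bigprop}(7) yields $\mu(N)>\mu(Z_{i+1})=\mu(X_{i+1})$, a contradiction. Without this step your version of Proposition~\ref{ds} is incomplete.
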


The proof of this theorem is almost the same as that for the
$\widetilde{\mathbb{A}}_n$ case. As we have remarked after Proposition
\ref{bigprop} that the statements (2), (4) and (7) in
Proposition \ref{bigprop} hold in general \cite{Ch4}. Using these we can show Lemma \ref{2} all tame quivers.
Proposition \ref{ds} remains true.  But the proof
should be changed a little bit because in general, a GR submodule of
a preinjective module is not necessary a regular module. The first
part of the proof of Proposition \ref{ds} is valid in general cases. For the second part, we
have to change as follows:

\begin{proof} Assume that $M$ is an indecomposable preinjective module such that
$\mu(X_i)<\mu(M)<\mu(X_{i+1})$ with $|M|$ minimal. Let $N$ be a GR
submodule of $M$. Comparing the lengths, we have
$\mu(X_i)\leq\mu(N)$. If $N=Y_j$ is regular for some quasi-simple
module $Y$ of rank $s$, then
$\mu(M)>\mu(Y_{j+1})>\mu(Y_j)\geq\mu(X_i)$. This  contradicts the
first part of the proof. If $N$ is preinjective, then
$\mu(N)=\mu(X_i)$ by the minimality of $|M|$. Thus a GR filtration
of $N$ contains a regular module $Z_{2t}$ for a quasi-simple $Z$ of
rank $t$. It follows that $\mu(X_{2r})=\mu(Z_{2t})$.  Thus by Lemma \ref{2} and Proposition \ref{bigprop}(7)
we have
$\mu(M)>\mu(N)>\mu(Z_{i+1})=\mu(X_{i+1})$, which is a contradiction.
\end{proof}

Lemma \ref{befdp} is true in general. However, Proposition \ref{dp}
should be replaced by the following one:

\begin{prop}\begin{itemize}
      \item[(1)]There are only finitely many GR measures lying between
               $\mu_{i,j}$ and $\mu_{i,j+1}$.
      \item[(2)] There are only finitely many GR measures lying between
                $\mu_{i,t_i}$ and $\mu_{i+1,1}$. In particular, $\mu_{i,j}$ has a direct predecessor.
      \end{itemize}
\end{prop}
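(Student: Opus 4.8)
The plan is to imitate the two-part argument in the proof of Proposition~\ref{dp}, but at the place where the $\widetilde{\mathbb{A}}_n$ proof produced a contradiction from the statement that an indecomposable preinjective module cannot be a proper submodule of an indecomposable module, replace that contradiction by a finiteness assertion. For part~(1), suppose $M$ is indecomposable with $\mu_{i,j+1}<\mu(M)<\mu_{i,j}$. As in the proof of Proposition~\ref{dp} one first checks that $\mu(M)$ starts with $\mu(X_i)$ (it is squeezed between two GR measures that start with $\mu(X_i)$ and whose extra elements lie in the interval $(|X_i|,|X_{i+1}|)$), so we may write $\mu(M)=\mu(X_i)\cup\{b_1<b_2<\ldots<b_m\}$ with $b_1>|X_i|$. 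Since every initial segment of $\mu(M)$ is again a GR measure, realized by the appropriate term of a GR filtration of $M$, and since $b_1\leq a_{i,j+1}<|X_{i+1}|$ by Lemma~\ref{befdp}(1), the set $\mu(X_i)\cup\{b_1\}$ is one of the measures of the form $\mu(X_i)\cup\{a\}$ with $a\neq|X_{i+1}|$; because $\mu_{i,j}$ and $\mu_{i,j+1}$ are consecutive in the sequence $\mu_{i,1}>\mu_{i,2}>\ldots>\mu_{i,t_i}$, this forces $b_1=a_{i,j+1}$, and then $m\geq 2$. Hence $M$ properly contains an indecomposable submodule $N$ with $\mu(N)=\mu_{i,j+1}$, and $N$ is preinjective by Lemma~\ref{befdp}(4). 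Now $|N|=a_{i,j+1}$ is a fixed number, so $N$ ranges over a finite set of isomorphism classes; combined with the key point below — that each indecomposable preinjective module is a proper submodule of only finitely many indecomposable modules — this bounds the number of isomorphism classes of $M$, hence the number of GR measures strictly between $\mu_{i,j+1}$ and $\mu_{i,j}$.

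Part~(2) is the same computation pushed one level further: if $\mu_{i+1,1}<\mu(M)<\mu_{i,t_i}$ then $\mu(M)$ starts with $\mu(X_i)$, and since $a_{i,t_i}$ is the largest of the $a_{i,l}$, while $b_1\neq|X_{i+1}|$ would again put $\mu(X_i)\cup\{b_1\}$ among the $\mu_{i,l}$, we are forced to have $b_1=|X_{i+1}|$; repeating the argument at the next level, where now $a_{i+1,1}$ is the smallest of the $a_{i+1,l}$, gives $b_2=a_{i+1,1}$, so $M$ properly contains an indecomposable preinjective submodule $N$ with $\mu(N)=\mu_{i+1,1}$, and we finish as in part~(1). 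For the final clause: the set of GR measures strictly below $\mu_{i,j}$ is nonempty, since it contains $\mu(X_i)$ by Lemma~\ref{befdp}(2), and, by part~(1) when $j<t_i$ or by part~(2) when $j=t_i$ (replacing $\mu_{i+1,1}$ by the largest measure of the next nonempty segment when $\mu_{i+1,1}$ does not exist), only finitely many GR measures lie in an interval $(\lambda,\mu_{i,j})$ with $\lambda$ itself a GR measure smaller than $\mu_{i,j}$; therefore this set has a maximum, which is by definition the direct predecessor of $\mu_{i,j}$.

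The main obstacle is the ingredient just invoked, which replaces the (false in general) assertion that a preinjective indecomposable has no proper indecomposable overmodule by the weaker one that it has only finitely many. I would prove it as follows. If $N\subsetneq M$ with $M$ indecomposable, then $\Hom(N,M)\neq 0$; since $\Lambda$ is hereditary and $\Hom(\mathcal{I},\mathcal{P}\cup\mathcal{R})=0$, the module $M$ is again preinjective. Write $N=\tau^{k}I_\nu$ and $M=\tau^{l}I_{\nu'}$. Iterating the Auslander--Reiten formula (or using the shape of the preinjective component) shows that $\Hom(\tau^{k}I_\nu,\tau^{l}I_{\nu'})=0$ as soon as $l>k$, that is, once $\tau^{-l}N=0$; hence $l$ is bounded by the distance $k$ of $N$ from the injective modules, a quantity depending only on $N$. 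Since there are only finitely many injective indecomposable modules, only finitely many indecomposable $M$ admit a nonzero map from $N$, a fortiori only finitely many contain $N$ as a submodule. This is exactly the step where the general tame case genuinely differs from the $\widetilde{\mathbb{A}}_n$ case, in which that finite set of overmodules happens to be empty. The only further care needed is that a GR measure does not determine a module up to isomorphism; but since all the modules $N$ occurring above have one and the same length, they form a finite set, and the bound applies uniformly to each of them.
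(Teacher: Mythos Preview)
Your proof is correct and follows the same strategy as the paper's: show that any indecomposable $M$ with measure strictly between the two given $\mu$'s must contain, as a proper submodule in a GR filtration, an indecomposable preinjective module $N$ of a fixed GR measure (hence fixed length), and then invoke the fact that an indecomposable preinjective module admits only finitely many indecomposable overmodules. The paper asserts this last fact without justification, so your explicit argument via $\Hom(\mathcal{I},\mathcal{P}\cup\mathcal{R})=0$ and the directedness of the preinjective component is a useful addition rather than a deviation.
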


\begin{proof} Assume that $M$ is an indecomposable module such that
$\mu(X_i)\cup\{a_{i,j+1}\}=\mu_{i,j+1}<\mu(M)<\mu_{i,j}=\mu(X_i)\cup\{a_{i,j}\}$.
Then $\mu(M)=\mu(X_i)\cup \{b_1,b_2,\ldots,b_m\}$. By definition of
$\mu_{i,j}$, we have $b_1=a_{i,j+1}$ and $m\geq 2$, In particular,
$M$ has a GR filtration containing an indecomposable module $N$ such
that $\mu(N)=\mu(X_i)\cup\{b_1\}$, which is thus preinjective.
However, there are only finitely many indecomposable modules
containing a given indecomposable preinjective module as a
submodule. It follows that only finitely many GR measures starting
with $\mu(N)=\mu(X_i)\cup\{b_1\}$.  Therefore, the number of GR
measures, which lies between $\mu_{i,j+1}$ and $\mu_{i,j}$ is finite
for each $i\geq 2r$.

2) follows similarly. Notice that
the first remark after Proposition \ref{dp} still works for this
case.
\end{proof}

The remaining proof of Theorem \ref{tame} is similar.

\section{preinjective Central modules }\label{precen}

In \cite{R1}, it was proved that all landing modules are
preinjective in the sense of Auslander and Smal\o \,\ \cite{AS}.
There may exist infinitely many preinjective central modules. In
this section, we study the preinjective modules and the
central part. Throughout this section, $Q$ is a fixed tame
quiver of type $\widetilde{\mathbb{A}}_n$.

\subsection{}We first describe the landing modules.
\begin{prop}Let $M$ be an indecomposable preinjective module.  Then
either $M\in\mathcal{L}$ or $\mu(M)<\mu(X)$ for some indecomposable
regular module $X$.
\end{prop}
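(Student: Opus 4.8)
The plan is to argue by contradiction: suppose $M$ is an indecomposable preinjective module that is not a landing module and satisfies $\mu(M)\geq \mu(X)$ for every indecomposable regular module $X$. Since landing modules are preinjective \cite{R1} and the preinjective modules that are not landing are either take-off or central, $M\in\mathcal{T}\cup\mathcal{C}$. The take-off case is quickly excluded: by Proposition \ref{bigprop}(2) every preprojective module has GR measure strictly below $\mu(H_1)$, and $\mu(H_1)$ is central (Proposition \ref{bigprop}(3)), so a take-off preinjective module would have $\mu(M)<\mu(H_1)$, contradicting $\mu(M)\geq\mu(H_1)$. Hence $M\in\mathcal{C}$, i.e. $M\in\mathcal{I}\setminus\mathcal{T}$, and we may apply Proposition \ref{bigprop}(1)d and (7): a GR submodule of $M$ is of the form $X_i$ for a quasi-simple $X$ of some rank $r$, and $\mu(M)>\mu(X_j)$ for all $j\geq 1$.

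The next step is to push $|M|$ large. Since there are only finitely many indecomposable preinjective modules of length at most $2|\delta|$, and since each of those would give a regular module $X$ with $\mu(M)<\mu(X)$ only if... — more precisely, the assumption $\mu(M)\geq\mu(X)$ for \emph{all} regular $X$ is extremely strong, so the idea is to exhibit a regular module whose GR measure exceeds $\mu(M)$. First I would take any quasi-simple $X$ of rank $r>1$ on an exceptional tube with $\mu(X_r)\geq\mu(H_1)$, which exists by Proposition \ref{bigprop}(5); then by Proposition \ref{bigprop}(4)b the irreducible inclusions $X_r\subset X_{r+1}\subset\cdots$ are all GR inclusions, so $\mu(X_i)$ strictly increases with $i$ and $\mu(X_i)$ takes arbitrarily large values (in the order on $\mathcal{P}(\mathbb{N})$, as $|X_i|\to\infty$). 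Concretely, for $i$ large enough $\mu(X_i)$ starts with $\mu(X_r)\supseteq$ a fixed initial segment and then appends arbitrarily many large entries, so $\mu(X_i)>\mu(M)$ once $|X_i|>|M|$ and the tail of $\mu(X_i)$ beyond the common part of the two measures forces the comparison. That directly contradicts $\mu(M)\geq\mu(X_i)$.

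The delicate point — and the one I expect to be the main obstacle — is making the comparison $\mu(X_i)>\mu(M)$ rigorous rather than heuristic: two GR measures are compared by their symmetric difference, so I need to know enough about $\mu(M)$ to be sure that $\mu(X_i)$, for suitable $i$, beats it. The clean way is to invoke Proposition \ref{bigprop}(7) once more together with Proposition \ref{epi}(1)–(2): writing $T$ for a GR submodule of $M$, every proper submodule of $M$ has measure $\leq\mu(T)$, and $\mu(M)=\mu(T)\cup\{|M|\}$; meanwhile $\mu(X_i)$ for $i\gg 0$ contains a long run of consecutive-ish lengths near $|\delta|,2|\delta|,\dots$ that cannot all be matched inside $\mu(T)\cup\{|M|\}$ because $|T|\leq |M|$ and the entries of $\mu(T)$ are bounded by $|T|$. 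So I would single out a specific entry of $\mu(X_i)$ — for instance $|X_{r+1}|$, which is $|\delta|+(\text{something})\leq 2|\delta|$ and is an element of every $\mu(X_j)$ with $j>r$ — lying in $(\mu(X_i)\setminus\mu(M))$ and below $\min(\mu(M)\setminus\mu(X_i))$, to conclude $\mu(X_i)>\mu(M)$. I would then finish by noting this contradicts the standing hypothesis, so in fact $M\in\mathcal{L}$ whenever $\mu(M)\geq\mu(X)$ for all regular $X$, which is exactly the stated dichotomy.
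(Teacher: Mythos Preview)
Your proposal has a genuine gap in the measure comparison. You claim that for $i$ large enough $\mu(X_i)>\mu(M)$ because $\mu(X_i)$ ``appends arbitrarily many large entries''. But in the GR order on $\mathcal{P}(\mathbb{N})$, the set containing the \emph{smallest} element of the symmetric difference is the larger one; appending large entries to a fixed initial segment never pushes a measure above one that already beats it at an earlier position. Concretely, the sequence $(\mu(X_i))_{i\geq r}$ is strictly increasing and \emph{bounded above} (for instance by any landing measure, and in your situation by $\mu(M)$ itself), so there is no reason it should ever exceed $\mu(M)$. Your attempt to single out $|X_{r+1}|$ does not help: nothing rules out $|X_{r+1}|\in\mu(M)$, and even if $|X_{r+1}|\notin\mu(M)$ you have no control over $\min(\mu(M)\setminus\mu(X_i))$. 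In short, the hypothesis $\mu(M)\geq\mu(X)$ for all regular $X$ is perfectly compatible with $M$ being a landing module, and you cannot extract a contradiction from measure arithmetic alone without using that $M\notin\mathcal{L}$ in some essential way --- which your argument never does.

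The paper's proof proceeds quite differently. Assume $\mu(M)>\mu(X)$ for every regular $X$ (the case $\mu(M)=\mu(X_i)$ is disposed of immediately since then $\mu(M)<\mu(X_{i+1})$). By the Successor Lemma, $\mu(M)$ has a direct successor $\mu_1$; any module $Y^1$ realizing $\mu_1$ must be preinjective (regular is excluded by hypothesis, preprojective is below $\mu(H_1)$). A GR submodule $X^1$ of $Y^1$ is regular, so $\mu(X^1)<\mu(M)<\mu(Y^1)$, and Proposition~\ref{epi}(1) gives $|Y^1|<|M|$. Iterating produces a chain of direct successors with strictly decreasing lengths, which must terminate at $I^1$; hence $\mu(M)$ is a landing measure. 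The key idea you are missing is this use of the Successor Lemma together with the length drop forced by Proposition~\ref{epi}(1).
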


\begin{proof} If $\mu(M)=\mu(X_i)$ for some quasi-simple module $X$, we thus
have $\mu(M)<\mu(X_j)$ for all $j>i$.  Thus we may assume that $\mu(M)> \mu(X)$ for all regular modules
$X\in\mathcal{R}$.  Let $\mu_1$ be the direct successor of $\mu(M)$
and $\mathcal{A}(\mu_1)$  the collection of indecomposable modules
with GR measure $\mu_1$. It follows that $\mathcal{A}(\mu_1)$
contains only preinjective modules. Let $Y^1\in\mathcal{A}(\mu_1)$
and $X^1\ra Y^1$ be a GR inclusion. Since $X^1\in\mathcal{R}$, we
have $\mu(X^1)<\mu(M)<\mu(Y^1)=\mu_1$. Thus $|M|>|Y^1|$. Let $\mu_2$
be the direct successor of $\mu_1$ and $Y^2\in\mathcal{A}(\mu_2)$.
As above we have $|Y^1|>|Y^2|$. Repeating this procedure, we get a
sequence of indecomposable preinjective modules $M=Y^0,Y^1,
Y^2,\ldots, Y^n,\ldots$ such that $\mu(Y^i)$ is the direct successor
of $\mu(Y^{i-1})$ and $|Y^i|<|Y^{i-1}|$. Because the lengths
decrease,  there is some $j<\infty$ such that $\mu(Y^j)$ has no
direct successor.  It follows that $\mu(Y^j)=I^1$ and $\mu(M)$ is a
landing measure.
\end{proof}

\begin{coro} Let $M$ be an indecomposable  module.
Then $\mu(M)>\mu(X)$ for all regular modules $X$ if and only if $M$
is a landing module.
\end{coro}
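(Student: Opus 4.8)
The plan is to establish the two directions of the equivalence separately, using the Proposition just proved together with the partition of $\mod\Lambda$ recalled in the introduction.

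For the implication ``$\mu(M)>\mu(X)$ for all regular $X$'' $\Longrightarrow$ ``$M$ is a landing module'', I would first observe that $M$ cannot be regular, since otherwise taking $X=M$ would give the absurdity $\mu(M)>\mu(M)$; and $M$ cannot be preprojective, since Proposition \ref{bigprop}(2) gives $\mu(M)<\mu(H_1)$ while $H_1$ is regular, contradicting the hypothesis. Hence $M$ is preinjective, and the preceding Proposition supplies the dichotomy: either $M\in\mathcal{L}$, or $\mu(M)<\mu(X)$ for some indecomposable regular module $X$. The hypothesis rules out the second alternative, so $M$ is a landing module.

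For the converse I would suppose that $M$ is a landing module, so that $\mu(M)$ is a landing measure. By Ringel's result recalled in the introduction, every landing module is preinjective; in particular no indecomposable regular module is a landing module, so for every indecomposable regular $X$ the measure $\mu(X)$ is a take-off measure or a central measure. Since every landing measure strictly exceeds every take-off and every central measure (by the chain $\mu_1<\mu_2<\ldots\,\ldots<\mu^2<\mu^1$, together with $\mu_i<\mu<\mu^j$ for all $i,j$ whenever $\mu$ is central), we conclude $\mu(M)>\mu(X)$ for all regular modules $X$.

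There is no real obstacle here: the corollary only repackages the preceding Proposition and standard facts. The only points requiring attention are invoking Proposition \ref{bigprop}(2) to discard the preprojective case, invoking the known fact that landing modules are preinjective for the converse, and checking that the dichotomy in the Proposition is applied with strict inequalities throughout.
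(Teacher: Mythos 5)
Your proof is correct and follows essentially the argument the paper intends (it states the corollary without proof as an immediate consequence of the preceding proposition): rule out the regular and preprojective cases (the latter via Proposition \ref{bigprop}(2) and the fact that $H_1$ is regular) and apply the proposition for the forward direction, and use that landing modules are preinjective together with Ringel's take-off/central/landing partition of the GR measures for the converse. No gaps.
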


\begin{prop} If $M, N$ are landing modules, then $\mu(M)<\mu(N)$ if
and only if $|M|>|N|$.
\end{prop}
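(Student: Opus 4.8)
The plan is to reduce the proposition to the behaviour of length along chains of direct successors of landing measures. The key step is the following one-step statement, which is essentially a single step of the length-decreasing construction used in the proof above: \emph{if $M'$ and $N'$ are landing modules and $\mu(N')$ is the direct successor of $\mu(M')$, then $|M'|>|N'|$}. To prove it, note that $N'$ is preinjective (every landing module is preinjective), so by Proposition \ref{bigprop}(1)d a GR inclusion $T\subset N'$ has $T$ regular; since $M'$ is a landing module, the preceding Corollary gives $\mu(M')>\mu(X)$ for every regular $X$, hence $\mu(T)<\mu(M')<\mu(N')$, and Proposition \ref{epi}(1) applied to the GR inclusion $T\subset N'$ yields $|M'|>|N'|$.

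Next I would prove the implication $\mu(M)<\mu(N)\Rightarrow|M|>|N|$ for landing modules $M,N$. Starting from $\mu(M)$ and repeatedly passing to the direct successor, which is legitimate by the Successor Lemma until the maximal GR measure $I^1$ is reached and which keeps us among landing measures, I obtain landing modules $M=Y^0,Y^1,\dots,Y^m$ with $\mu(Y^i)$ the direct successor of $\mu(Y^{i-1})$ and $\mu(Y^m)=I^1$; by the one-step statement the lengths $|Y^0|>|Y^1|>\cdots$ strictly decrease, so this chain is finite and indeed ends at $I^1$. Since $\mu(M)\le\mu(N)\le I^1$ and consecutive members of the chain admit no GR measure strictly between them, $\mu(N)$ must equal some $\mu(Y^i)$, necessarily with $i\ge 1$. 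Hence $|M|=|Y^0|>\cdots>|Y^i|=|N|$, as required.

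The converse is then formal: the largest element of the set $\mu(M)$ is $|M|$, so $\mu(M)=\mu(N)$ would force $|M|=|N|$. Therefore, if $|M|>|N|$, the totality of the order on GR measures together with the implication just proved (which, applied with $M$ and $N$ interchanged, would give $|N|>|M|$ in case $\mu(N)<\mu(M)$) leaves only the possibility $\mu(M)<\mu(N)$. The one place that needs a little care is the assertion that iterating ``direct successor'' from $\mu(M)$ actually reaches $\mu(N)$ — equivalently, that this finite chain exhausts every GR measure in the interval $[\mu(M),I^1]$ — but this is immediate from the definition of a direct successor: if some GR measure in that interval were omitted, it would lie strictly between two consecutive members of the chain. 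All other ingredients are direct citations of Proposition \ref{epi}(1), Proposition \ref{bigprop}(1)d, the preceding Corollary and the Successor Lemma.
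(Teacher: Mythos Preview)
Your proof is correct, but it is unnecessarily elaborate: the chaining through direct successors is superfluous, because your ``one-step'' argument already proves the full forward implication directly. Indeed, nowhere in your one-step proof do you use that $\mu(N')$ is the \emph{direct} successor of $\mu(M')$; you only use $\mu(M')<\mu(N')$. So applied verbatim to arbitrary landing modules $M,N$ with $\mu(M)<\mu(N)$, it gives: a GR submodule $X$ of $N$ is regular by Proposition~\ref{bigprop}(1)d, hence $\mu(X)<\mu(M)$ by the preceding Corollary, hence $\mu(X)<\mu(M)<\mu(N)$ and $|M|>|N|$ by Proposition~\ref{epi}(1). This is exactly the paper's proof, in two lines. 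The Successor Lemma, the finiteness of the chain, and the argument that the chain hits $\mu(N)$ are all unneeded.

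Your handling of the converse is fine and matches what the paper leaves implicit.
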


\begin{proof} Assume that $\mu(M)<\mu(N)$. Let $X$ be a GR
submodule of $N$. Since $X$ is a regular modules, we have
$\mu(X)<\mu(M)<\mu(N)$ and thus $|M|>|N|$.
\end{proof}

\begin{prop}\label{landing} Assume that there is a stable tube of rank $r>1$.  Then
all but finitely many landing modules contain only exceptional regular modules
as GR submodules.
\end{prop}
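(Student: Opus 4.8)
The plan is to show that a landing module $M$ with sufficiently large length cannot have a homogeneous module $H_i$ as a GR submodule, and hence (by Lemma \ref{onemap}, which bounds the GR submodules in each regular component, together with the fact that a homogeneous tube exists and a GR submodule of a preinjective module is regular by Proposition \ref{bigprop}(1)d) all its GR submodules lie in exceptional tubes. First I would recall from the preceding proposition that for landing modules the GR measure and the length are inversely ordered: if $M,N$ are landing modules then $\mu(M)<\mu(N)$ iff $|M|>|N|$. So it suffices to produce a \emph{single} landing module $M_0$ whose GR submodules are all exceptional; every landing module with GR measure $>\mu(M_0)$ (equivalently, length $<|M_0|$, but since there are only finitely many preinjectives of bounded length this is the same as ``all but finitely many'') will then inherit the property, provided I can also propagate the property upward along direct successors.

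The key step is the propagation. Suppose $M$ is a landing module all of whose GR submodules are exceptional regular modules, and let $\mu(M')$ be the direct successor of $\mu(M)$; by the Successor Lemma and the fact that landing measures have landing direct successors, $M'$ is again a landing module, and by the inverse ordering $|M'|<|M|$. Let $X'_j$ be a GR submodule of $M'$, with $X'$ quasi-simple of rank $r'$. I must rule out $r'=1$. If $r'=1$ then $X'_j=H_j$ for a homogeneous module; since $\mu(H_j)$ is a central measure for $j$ in the relevant range (the tail $\mu(H_i)\ra\mu(H_{i+1})$ is a chain of direct successors by the discussion before Lemma \ref{2}, and $\mu(H_1)$ is central by Proposition \ref{bigprop}(3)), and since $M'$ is a landing module, we have $\mu(H_j)<\mu(M')$, whence $\mu(H_j)<\mu(M')<\mu(H_{j+1})$ is impossible only if $j$ is so large that $\mu(M')$ has already overtaken every $\mu(H_t)$ — which gives a lower bound on $|M'|$ of the form $2|\delta|$-ish, contradicting that $|M'|<|M|$ once $|M|$ is itself taken large enough. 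More precisely: by Proposition \ref{bigprop}(3), $\mu(M')>\mu(H_1)$ forces $\udim M' > \delta$, i.e. $|M'|>|\delta|$; iterating, if $\mu(M')>\mu(H_t)$ for the largest homogeneous GR-index $t$ attainable below a fixed bound then $|M'|$ must itself be large. The cleanest route is: since there are only finitely many quasi-simple modules $Z$ of rank $r_Z$ with $\mu(Z_{r_Z})\leq\mu(H_1)$, and since for a homogeneous GR submodule $H_j$ of a preinjective $M'$ one needs $\mu(H_j)<\mu(M')$ with $\mu(H_j)$ landing-free (central), one shows $|M'|$ is bounded below by a constant $d$ depending only on $Q$; so taking $M$ with $|M|>d$ at the outset forces every GR submodule of $M'$ to be exceptional.

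The main obstacle will be organizing the induction cleanly: one wants to start from one landing module whose GR submodules are all exceptional (such a module exists — take any preinjective $M$ of very large length; its GR submodule is $X_i$ with $i\geq 2r$ and $\mu(X_r)\geq\mu(H_1)$ since otherwise $\mu(X_i)<\mu(H_1)<\mu(M)$ forces $|H_1|>|M|$, impossible, exactly the argument in the proof of the previous theorem; and $X$ cannot be homogeneous because a homogeneous GR submodule would have central, hence not landing-overtaking, GR measure once $|M|$ exceeds $d$), and then argue that the set of landing measures $\mu$ with $\mu\geq\mu(M)$ is totally ordered as a chain of direct successors (Successor Lemma plus inverse length ordering), so that a single propagation step suffices. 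Concretely I would: (i) fix the constant $d$ so that every indecomposable preinjective of length $>d$ has an exceptional GR submodule, using Lemma \ref{onemap} and Proposition \ref{bigprop}(3),(7); (ii) note that only finitely many preinjective modules have length $\leq d$, and a fortiori only finitely many landing modules do; (iii) conclude that all remaining landing modules — those of length $>d$ — have every GR submodule exceptional, which is the assertion.
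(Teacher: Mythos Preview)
Your proposal does not coalesce into a proof, and it misses the one short observation that makes the proposition immediate. The paper's argument is: if a landing module $M$ has a homogeneous GR submodule $H_i$, pick (by Proposition~\ref{bigprop}(5)) a quasi-simple $X$ on the given tube of rank $r>1$ with $\mu(X_r)\geq\mu(H_1)$; then Proposition~\ref{bigprop}(4)b gives $\mu(X_{r+1})>\mu(H_j)$ for every $j$, while the corollary just before this proposition gives $\mu(M)>\mu(X_{r+1})$. Hence $\mu(H_i)<\mu(X_{r+1})<\mu(M)$, and since $H_i$ is a GR submodule of $M$, Proposition~\ref{epi}(1) forces $|M|<|X_{r+1}|<2|\delta|$. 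Only finitely many preinjectives have length below $2|\delta|$, and we are done. No induction, no propagation along successors, no choice of a starting module $M_0$.

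There are two concrete problems with your outline. First, your step (i) --- ``fix $d$ so that every indecomposable preinjective of length $>d$ has an exceptional GR submodule'' --- is false as stated: the paper later exhibits (Cases~1 and~2 in Section~\ref{precen}) infinitely many preinjective modules of arbitrarily large length \emph{all} of whose GR submodules are homogeneous. The restriction to \emph{landing} modules is essential, and it is exactly what lets you invoke $\mu(M)>\mu(X_{r+1})$. Second, the quantitative part of your argument is inverted: you write that the existence of a homogeneous GR submodule ``gives a lower bound on $|M'|$ of the form $2|\delta|$-ish, contradicting that $|M'|<|M|$ once $|M|$ is itself taken large enough'', but a lower bound on $|M'|$ never contradicts $|M'|<|M|$ for large $|M|$. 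What one needs --- and what the paper obtains --- is an \emph{upper} bound on $|M|$. The ingredient you are missing is Proposition~\ref{bigprop}(5), which produces a single exceptional regular module $X_{r+1}$ whose GR measure dominates every $\mu(H_i)$; this, together with Proposition~\ref{epi}(1), converts the order information into the required length bound in one line.
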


\begin{proof} Let $M$ be a landing module which is thus
preinjective. Thus the GR submodules of $M$ are all regular modules.
Assume that $M$ contains homogeneous modules $H_i$ as GR submodules.
Let $\mathbb{T}$ be a stable tube of rank $r>1$. Then there exists
a quasi-simple module $X$ on $\mathbb{T}$ such that
$\mu(X_r)\geq\mu(H_1)$ (Proposition \ref{bigprop}(5)). Thus
$\mu(H_i)<\mu(X_{r+1})<\mu(M)$ and therefore, $|X_{r+1}|>|M|$. This
implies $i=1$ and $|M|<2|\delta|$.
\end{proof}

\begin{coro}\label{cen} Assume that there is a stable tube of rank $r>1$.
If $M$ is an indecomposable module containing homogeneous modules $H_i$ as
GR submodules for some $i\geq 2$, then $M$ is a central module.
\end{coro}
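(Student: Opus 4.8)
The plan is to eliminate, one at a time, the possibilities that $M$ is a take-off module or a landing module; Ringel's partition of the indecomposable modules into take-off, central and landing ones then leaves only $M\in\mathcal{C}$, as desired.

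First I would check that $M$ is not a take-off module. Since $H_i$ is a GR submodule of $M$ it is in particular a proper submodule, so $\mu(H_i)<\mu(M)$; and because the irreducible monomorphisms $H_1\ra H_2\ra H_3\ra\cdots$ are GR inclusions (Proposition \ref{bigprop}(4)), the measures $\mu(H_j)$ strictly increase in $j$, so $\mu(H_1)<\mu(H_2)\le\mu(H_i)$ as $i\geq 2$. Hence $\mu(H_1)<\mu(M)$. But $\mu(H_1)$ is a central measure (Proposition \ref{bigprop}(3)), and every take-off measure is strictly smaller than every central measure; therefore $\mu(M)$ is not a take-off measure.

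Next, and this is essentially the computation already carried out in the proof of Proposition \ref{landing}, I would show that $M$ is not a landing module. Suppose it were. Then $M$ is preinjective \cite{R1}, so each of its GR submodules is regular (Proposition \ref{bigprop}(1)d), which is compatible with $H_i$ being one. By hypothesis there is a stable tube of rank $r>1$; choose on it a quasi-simple module $X$ with $\mu(X_r)\geq\mu(H_1)$ (Proposition \ref{bigprop}(5)). Since $r>1$, Proposition \ref{bigprop}(4)b gives $\mu(H_i)<\mu(X_{r+1})$, and since $X_{r+1}$ is regular while $M$ is landing, $\mu(X_{r+1})<\mu(M)$. Thus $\mu(H_i)<\mu(X_{r+1})<\mu(M)$ with $H_i\subset M$ a GR inclusion, so $|X_{r+1}|>|M|$ by Proposition \ref{epi}(1). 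However $|X_{r+1}|=|\delta|+|X_{r+1}/X_r|<2|\delta|$, because the quotient $X_{r+1}/X_r$ is a quasi-simple module and hence of total dimension $<|\delta|$ (this is where $r>1$ is used). Hence $|M|<2|\delta|$, and then $i|\delta|=|H_i|\le|M|<2|\delta|$ forces $i=1$, contradicting $i\geq 2$. So $M$ is not landing, and combining the two steps $M$ is central.

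The argument is short and I do not anticipate a genuine obstacle: the only two points requiring a little care are the purely numerical estimate $|X_{r+1}|<2|\delta|$, and the observation that once $M$ has been assumed landing (hence preinjective) the inequality chain $\mu(H_i)<\mu(X_{r+1})<\mu(M)$, with $H_i$ a GR submodule of $M$, is exactly the hypothesis of Proposition \ref{epi}(1).
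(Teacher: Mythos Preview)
Your argument is correct and is essentially the paper's own approach: the paper states this as an immediate corollary of Proposition~\ref{landing}, whose proof already shows that a landing module with GR submodule $H_i$ must satisfy $i=1$ and $|M|<2|\delta|$, and your step~2 reproduces that computation verbatim. The only addition you make is the explicit (and easy) verification that $M$ cannot be a take-off module, which the paper leaves implicit; your reasoning there via $\mu(M)>\mu(H_1)$ is the natural one.
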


\subsection{}
We partition the tame quivers of type
$\widetilde{\mathbb{A}}_n$ into three classes; for each cases we study
the possible  preinjective central modules.

{\bf Case 1} Assume that in the quiver $Q$ there is a clockwise path
of arrows $\alpha_1\alpha_2$  and a counter clockwise path
$\beta_1\beta_2$ as follows:
$$\xymatrix{\bullet\ar@{--}[d]\ar[r]^{\alpha_2} & 1 \ar[r]^{\alpha_1} & \bullet\ar@{--}[d]\\
            \bullet\ar[r]_{\beta_2} & 2 \ar[r]_{\beta_1} & \bullet} $$

Let $C$ be a string starting with $\alpha_2^{-1}$ and ending with
$\beta_2$. Thus $s(C)=1,e(C)=2$. It is obvious that the string
modules $M(C)$ contains both simple regular modules $S(1)$ and
$S(2)$, which are in different regular components,  as submodules.
Thus $M(C)$ is an indecomposable preinjective module. Fix such a
string $C$ such that the length of $C$ is large enough, i.e. $M(C)$
contains homogeneous modules $H_1$ as submodules. The GR submodules
of $M(C)$ has one of the following forms $S(1)_i, S(2)_j, H_t$ for
some $i,j,t\geq 1$. However,
$\mu(S(1)_i)<\mu(H_1),\mu(S(2)_{i})<\mu(H_1)$ for all $i\geq 0$
(Proposition \ref{bigprop}(6)). Thus the GR submodules of $M(C)$ are homogeneous
modules. In particular, there are infinitely many indecomposable
preinjective modules containing only homogeneous modules as GR
submodules.  Thus there are infinitely many preinjective central
modules by Corollary \ref{cen}.

\smallskip
As an example, we consider the following quiver
$Q=\widetilde{\mathbb{A}}_{p,q}$, $p+q=n+1$, with precisely one source and one
sink:
$$\xymatrix@R=12pt@C=18pt{
  & \bullet\ar[r]^{\alpha_2} & \bullet\ar[r]^{\alpha_3} && \cdots
                      & \bullet\ar[r]^{\alpha_{p-1}} &\bullet\ar[rd]^{\alpha_p}&\\
  1\ar[ru]^{\alpha_1}\ar[rd]_{\beta_1}&&&&&&& n+1\\
  & \bullet\ar[r]_{\beta_2} & \bullet\ar[r]_{\beta_3} & &\cdots
                      & \bullet\ar[r]_{\beta_{q-1}}&  \bullet\ar[ru]_{\beta_q} &\\ }$$

There are two stable tubes $\mathbb{T}_X$ and $\mathbb{T}_Y$
consisting of string modules. The stable tube $\mathbb{T}_Y$
contains the string module $Y$ determined by the string
$\beta_q\beta_{q-1}\cdots\beta_2\beta_1$, and simple modules $S$
corresponding to the vertices ${s(\alpha_i)}, 2\leq i\leq p$ as
quasi-simple modules. The rank of $Y$ is $p$. The other tube $\mathbb{T}_X$
contains string module $X$ determined by the string
$\alpha_p\alpha_{p-1}\cdots\alpha_2\alpha_1$ and simple modules $S$
corresponding to the vertices ${s(\beta_i)}, 2\leq i\leq q$. The
ranks of $X$ is $q$. All the other stable tubes contain only  band
modules.

We can easily determine the GR measures of these quasi-simple
modules. Notice that any non-simple quasi-simple module ($X$,$Y$ and
$H_1$) contains $S(n+1)$ as the unique simple submodule. Therefore,
each homogeneous simple module $H_1$ has GR measure
$\mu(H_1)=\{1,2,3,\ldots, n,n+1\}$ and the GR measure for $X$ and
$Y$ are $\mu(X)=\{1,2,3,\ldots,p,p+1\}$ and
$\mu(Y)=\{1,2,3,\ldots,q,q+1\}$. It is easily seen that
 $X_q\subset X_{q+1}\subset\ldots
\subset X_j\subset\ldots $ is a chain of GR inclusions and thus
$\mu(X_q)=\mu(H_1)$. Similarly,  $Y_p\subset Y_{p+1}\subset\ldots
\subset Y_j\subset\ldots $ is a chain of GR inclusions and
$\mu(Y_p)=\mu(H_1)$.

Any non-sincere indecomposable module belongs to the take-off part. This
is true because the GR submodule of $H_1$ is a uniserial module and
has GR measure $\{1,2,3,\ldots,n\}$ and a non-sincere
indecomposable module has length smaller than $|\delta|$.  Let $M\in
\mathcal{I}$ be a sincere indecomposable preinjective module and
$T\subset M$ a GR submodule. We claim that $T$ is isomorphic to some $H_i$,
$X_{sq}$ or $Y_{tp}$ for some $i, s,t\geq 1$. First of all the $T\ncong S_i$
for any simple regular module $S$ and any $i\geq 1$, since $\mu(S_i)<\mu(H_1)$ (Proposition \ref{bigprop}(6))
and there is monomorphism $H_1\ra M$ for each homogeneous module $H_1$.
If, for example, $T\cong X_i$ for some $i\geq q$, then there is an epimorphism $X_{i+1}\ra M$ (Proposition \ref{epi}(2)). Thus $|X_i|<|M|<|X_{i+1}|$. However, $|X_{i+1}|-|X_i|=1$ if $i$ is not divided by $q$.

Notice that if $p\geq 2$ and $q\geq 2$, then there are infinitely
many preinjective central modules by above discussion.

\smallskip

{\bf Case 2}  $Q=\widetilde{\mathbb{A}}_{p,1}$. Let's keep the
notations in the above example. By Proposition \ref{landing}, we
know that there are infinitely many landing modules containing only
exceptional modules of the form $Y_i$  as GR submodules. Given an
indecomposable preinjective module $M$ and its GR submodule $Y_i, i>
p$. We claim that the GR submodules of $\tau M$ are homogeneous
ones. Namely, if $\tau M$ contains an exceptional regular module $N$
as a GR submodule, then $N\cong Y_j$ for some $j\geq p$. In
particular, both $M$ and $\tau M$ contains $Y$ as a submodule, i.e.
$\Hom(Y,M)\neq 0\neq\Hom(Y,\tau M)$. Therefore, we have
$\Hom(\tau^{-}Y, M)\neq 0\neq \Hom(Y,M)$, which contradicts Lemma
\ref{onemap}.  Thus, there are infinite many indecomposable
preinjective modules containing only homogeneous modules as GR
submodules and hence infinitely many preinjective central modules.

\smallskip

{\bf Case 3} $Q\neq \widetilde{\mathbb{A}}_{p,q}$ is of the
following form: all non-trivial clockwise  (or counter clockwise)
paths (compositions of arrows) are of length $1$. In this case, all
exceptional quasi-simple modules in one of the exceptional tubes are
of length at least $2$, and the quasi-simple modules on the other
exceptional tube have length at most $2$.

Let $p=\beta_t\ldots\beta_2\beta_1$ be a composition of arrows in
$Q$ with maximal length. Thus there is an arrow $\alpha$ with ending
vertex $e(\alpha)=e(p)$ and $s(\alpha)$ is a source. Let $X=M(p)$ be
the string module, which is thus a quasi-simple module, say with
rank $r$. By the maximality of $p$ and the description of
irreducible maps between string modules, we may easily deduce that
the sequence of irreducible monomorphism $X=X_1\ra X_2\ra \ldots \ra
X_r\ra X_{r+1}\ra \ldots $ is namely a sequence of GR inclusions.
Therefore
$$\mu(X_{r+1})=\{1,2,3,\ldots,t+1=|X_1|, |X_2|, |X_3|,\ldots, |X_r|,
|X_{r+1}|\}$$ with $|X_i|-|X_{i-1}|\geq 2$ for $2\leq i\leq r$ and
$|X_{r+1}|=|X_r|+(t+1)$.

Let $Y$ be the string module determined by the arrow $\alpha$. It is
also a  quasi-simple module, say with rank $s$. It is clear that $X$ and $Y$ are
not in the same regular component. By the
description  of irreducible monomorphisms, we obtain that $|Y_j|=j+1$
for $j\leq t$ and $|Y_{t+1}|=t+3$. Thus
$$\mu(Y_{s+1})\geq \{1,2,\ldots, t+1,t+3, |Y_{t+2}|,\ldots,
|Y_s|,|Y_{s+1}|\}$$ with $|Y_i|-|Y_{i-1}|\leq 2$ for $i\leq s$ and
$|Y_{s+1}|=|Y_s|+2$.

This proves the following lemma.
\begin{lemm}
We keep the notations as above. If $t>1$, i.e. $Q$ is not equipped
with a sink-source orientation, then $\mu(Y_s) \geq \mu(X_r) $ and
$\mu(Y_j)>\mu(X_i)$ for $i\geq 1$ and $j> s$.
\end{lemm}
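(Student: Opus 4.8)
The plan is to read off both GR measures from the explicit sets computed just above the statement and compare them as subsets of $\mathbb{N}$ under the total order $<$ (equivalently via $\mu(I)=\sum_i 2^{-a_i}$). The hypotheses give
$$\mu(X_{r+1})=\{1,2,\ldots,t+1,|X_2|,\ldots,|X_r|,|X_{r+1}|\}$$
with the gap condition $|X_i|-|X_{i-1}|\geq 2$ for $2\leq i\leq r$ and $|X_{r+1}|=|X_r|+(t+1)$, while
$$\mu(Y_{s+1})\geq\{1,2,\ldots,t+1,t+3,|Y_{t+2}|,\ldots,|Y_s|,|Y_{s+1}|\}$$
with $|Y_i|-|Y_{i-1}|\leq 2$ for $i\leq s$ and $|Y_{s+1}|=|Y_s|+2$. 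Since both $\mu(X_r)\geq\mu(H_1)$ and $\mu(Y_s)\geq\mu(H_1)$ fail to be automatic, I first note that by Proposition \ref{bigprop}(5) one of the two exceptional tubes carries a quasi-simple $Z$ with $\mu(Z_{r_Z})\geq\mu(H_1)$; using the gap data and that both $X_r$ and $Y_s$ have dimension vector $\delta$, I would argue that in Case 3 it is in fact $Y$ (the tube with the shorter quasi-simples) that satisfies $\mu(Y_s)\geq\mu(H_1)$, and then show $\mu(X_r)\geq\mu(H_1)$ as well by comparing $\mu(X_r)$ with $\mu(H_1)=\{1,2,\ldots,n,n+1\}$ directly from the fact that $X_1\to\cdots\to X_r$ is a chain of GR inclusions with $|X_r|=|\delta|$.

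Granting $\mu(X_r)\geq\mu(H_1)$, the inequality $\mu(Y_s)\geq\mu(X_r)$ is the crux. Both sets start with the block $\{1,2,\ldots,t+1\}$. The next element of $\mu(X_r)$ is $|X_2|\geq t+3$ (because $|X_2|-|X_1|\geq 2$ and $|X_1|=t+1$), whereas the next element of $\mu(Y_s)$ is exactly $t+3$. If $|X_2|>t+3$ we are done immediately, since the smallest element where the two sets differ, namely $t+3$, lies in $\mu(Y_s)$, giving $\mu(X_r)<\mu(Y_s)$. If $|X_2|=t+3$, I compare the subsequent elements: since the $Y$-gaps are $\leq 2$ and the $X$-gaps are $\geq 2$, an easy induction on the index shows that at each stage the $Y$-entry is $\leq$ the corresponding $X$-entry, so the first point of disagreement (if any before exhaustion) again favours $Y$; and when $t>1$ the strict gap at the $X_{r}\to X_{r+1}$ step — encoded in $|X_{r+1}|=|X_r|+(t+1)\geq |X_r|+3$ versus $|Y_{s+1}|=|Y_s|+2$ — forces a genuine strict difference, so $\mu(Y_s)\geq\mu(X_r)$ with the inequality in (1) and the strict inequality $\mu(Y_j)>\mu(X_i)$ for $j>s$, $i\geq 1$ following from part (1) of Lemma \ref{2} applied with the roles of $X$ and $Y$ as given (both have $\mu(\cdot_{\text{rank}})\geq\mu(H_1)$, and $\mu(Y_s)\geq\mu(X_r)$).

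The main obstacle I anticipate is the bookkeeping in the "$|X_2|=t+3$" sub-case: one must track the running partial sums of the two difference sequences $(|X_i|-|X_{i-1}|)_i$ and $(|Y_i|-|Y_{i-1}|)_i$ and verify that the $X$-sequence never "catches up" before the forced strict drop at the top, which uses in an essential way that $t>1$ (when $t=1$, i.e. the sink-source orientation, both quasi-simple families have all gaps equal to $2$ and the conclusion genuinely fails, consistent with the hypothesis). I would phrase this cleanly by comparing, for each $\ell$, the $\ell$-th elements $|X_{1+\ell}|$ and $|Y_{t+1+\ell}|$ directly, using $|X_{1+\ell}|\geq (t+1)+2\ell$ and $|Y_{t+1+\ell}|\leq (t+1)+1+2\ell$ together with the final-step estimates; Lemma \ref{2}(1) then packages the passage from $\mu(Y_s)\geq\mu(X_r)$ to the full statement, since $\mu(X_r)\geq\mu(H_1)$ is exactly its hypothesis.
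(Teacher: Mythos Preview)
Your core approach---reading off the two GR measures from the computations preceding the lemma and comparing them element by element---is exactly what the paper does; indeed the paper's entire proof is the sentence ``This proves the following lemma'' placed after those computations.

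Two issues, however. First, the detour through $\mu(H_1)$ is unnecessary and contains an error: the formula $\mu(H_1)=\{1,2,\ldots,n,n+1\}$ is specific to the $\widetilde{\mathbb{A}}_{p,q}$ example with a single source and sink, and does not hold in Case~3 in general, so your argument for $\mu(X_r)\geq\mu(H_1)$ does not go through as written. Second, the appeal to Lemma~\ref{2}(1) for the strict inequality needs $\mu(Y_s)\geq\mu(H_1)$, which you have not established, and it also needs the \emph{strict} inequality $\mu(Y_s)>\mu(X_r)$ rather than $\geq$.

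Both detours can simply be dropped. Your element-by-element comparison already proves everything: comparing the lower bound $\{1,\ldots,t+1,t+3,|Y_{t+2}|,\ldots,|Y_s|,|Y_{s+1}|,\ldots\}$ for $\mu(Y_j)$ with the exact set $\mu(X_i)=\{1,\ldots,t+1,|X_2|,\ldots\}$ gives $\mu(Y_s)\geq\mu(X_r)$ directly, and for $j>s$ the step $|Y_{s+1}|=|\delta|+2<|\delta|+(t+1)=|X_{r+1}|$ (using $t>1$) forces the first point of disagreement past $|\delta|$ to lie in the $Y$-set, yielding $\mu(Y_j)>\mu(X_i)$ for all $i\geq 1$ without invoking Lemma~\ref{2} or any comparison with $\mu(H_1)$.
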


\subsection{}
Now we characterize the tame quivers $Q$ of type
$\widetilde{\mathbb{A}}_n$ such that no indecomposable preinjective
modules are central modules. We also show that there are always
infinitely many preinjective central modules if any.

\begin{theo}Let $Q$ be a tame quiver of type $\widetilde{\mathbb{A}}_n$.
Then $\mathcal{I}\cap\mathcal{C}=\emptyset$ if and only if $Q$ is equipped with
a sink-source orientation.
\end{theo}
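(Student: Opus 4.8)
The plan is to prove both directions using the case analysis set up in Section~\ref{precen}. For the ``only if'' direction, I would argue contrapositively: if $Q$ is \emph{not} equipped with a sink-source orientation, then some non-trivial composition of arrows has length $t>1$, and $Q$ falls into Case~1, Case~2, or Case~3 of the preceding discussion. In Cases~1 and~2 it has already been shown that there are infinitely many indecomposable preinjective modules whose GR submodules are all homogeneous, and hence (by Corollary~\ref{cen}, or directly by Proposition~\ref{bigprop}(3)) these are central modules; so $\mathcal{I}\cap\mathcal{C}\neq\emptyset$. In Case~3 I would use the Lemma just proved: since $t>1$ we have $\mu(Y_s)\geq\mu(X_r)$ and $\mu(Y_j)>\mu(X_i)$ for all $i\geq1$, $j>s$. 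This means that a sincere preinjective module $M$ whose GR submodule in the $Y$-tube is some $Y_j$ with $j$ large, and whose GR submodule in the $X$-tube would necessarily have GR measure below $\mu(Y_s)$, cannot have $X_i$ as a GR submodule for large $i$; combined with Lemma~\ref{onemap} (at most one GR submodule per regular component) and Proposition~\ref{bigprop}(6) (simple quasi-simples give GR measure below $\mu(H_1)$), the GR submodules of sufficiently long preinjective modules are forced to lie among the homogeneous modules $H_i$, and then Proposition~\ref{bigprop}(3) gives $\mu(M)>\mu(H_1)$ with $\mu(M)$ not a landing measure for appropriate choices, so $M\in\mathcal{C}$. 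The infinitude of such $M$ follows since there are infinitely many preinjective modules of each sufficiently large length in the relevant $\tau$-orbits.

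For the ``if'' direction, suppose $Q$ has a sink-source orientation. Then every composition of arrows has length $1$, so $t=1$ in the notation of Case~3, and every non-simple quasi-simple regular module has rank equal to the number of sinks (equivalently sources) and dimension vector a sum of simples with all entries $\leq 1$; in fact on each exceptional tube the quasi-simple modules have length $1$ or $2$. I would then show directly that every indecomposable preinjective $M$ is either a take-off module or a landing module. The key computation is that for a quasi-simple $X$ of rank $r$ arising from a single arrow in a sink-source quiver, one gets $\mu(X_r)<\mu(H_1)$ — this should follow from Proposition~\ref{bigprop}(6) since such $X_r$ has a simple regular submodule and the GR measure is forced to be $\{1,2,\ldots,r,r+1\}$-like with $r+1\leq n$ — hence by Proposition~\ref{bigprop}(4)a, $\mu(X_i)<\mu(H_j)$ for all $i,j$, so \emph{all} exceptional regular modules lie below every $H_j$. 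Consequently, for any preinjective $M$, a GR submodule is either homogeneous (giving, via the structure of $\mu(H_i)$ and Proposition~\ref{bigprop}(3), that $\mu(M)$ compares with the $\mu(H_i)$ in a controlled way) or exceptional regular with GR measure below all $\mu(H_j)$. Using that the $\mu(H_i)$ form a sequence of direct successors and that by \cite{R1} the landing measures are exactly those above all regular measures (Corollary after Proposition on landing modules), I would conclude: if $\mu(M)>\mu(X)$ for all regular $X$ then $M$ is landing; otherwise $\mu(M)<\mu(X)$ for some regular $X$, and since all regular modules here have GR measure $<\mu(H_1)$ and the preprojectives are already below that, $\mu(M)$ must itself lie in the initial segment of GR measures, i.e. $M$ is a take-off module. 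Either way $M\notin\mathcal{C}$, so $\mathcal{I}\cap\mathcal{C}=\emptyset$.

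The main obstacle I expect is the ``if'' direction: carefully verifying that in a sink-source quiver \emph{every} exceptional quasi-simple module $X$ of rank $r$ satisfies $\mu(X_r)<\mu(H_1)$, and then ruling out that any preinjective $M$ could sneak into the central part by having a homogeneous GR submodule $H_i$ with $i\geq 2$ (which by Corollary~\ref{cen} would force $M$ central). For this I would have to show that in the sink-source case no preinjective module has an $H_i$ with $i\geq2$ as a GR submodule — which should come from the explicit form $\mu(H_1)=\{1,3,5,\ldots,n,n+1\}$ (for $n$ odd) or the analogous even-$n$ description, together with a length count showing that any preinjective $M$ with $\Hom(H_1,M)\neq0$ already has $\mu(M)>\mu(H_1)$ only when $\udim M>\delta$, and then a GR submodule of such $M$ in the homogeneous tube would be $H_1$ itself, not $H_i$ with $i\geq2$, because the epimorphism $H_{i+1}\ra M$ of Proposition~\ref{epi}(2) would violate $|H_{i+1}|-|H_i|=n+1$ unless $i=1$. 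Pinning down this length bookkeeping, uniformly in the orientation, is the delicate part; the rest is assembling the pieces already proved in Sections~\ref{string}--\ref{precen}.
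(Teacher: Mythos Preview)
Your proposal has genuine gaps in both directions.

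\textbf{The ``if'' direction.} Your key claim --- that in a sink-source quiver every exceptional quasi-simple $X$ of rank $r$ satisfies $\mu(X_r)<\mu(H_1)$ --- is false, and in fact contradicts Proposition~\ref{bigprop}(5), which guarantees on every exceptional tube a quasi-simple $X$ with $\mu(X_r)\geq\mu(H_1)$. Concretely, for $n$ odd with sink-source orientation the length-$2$ quasi-simples $X$ (strings given by single arrows) satisfy $\mu(X_r)=\{1,2,4,6,\ldots,n+1\}>\{1,3,5,\ldots,n,n+1\}=\mu(H_1)$. You are misapplying Proposition~\ref{bigprop}(6): that statement concerns quasi-simples which are themselves simple, not arbitrary quasi-simples whose $X_r$ happens to contain a simple regular submodule. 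Once this claim fails, the rest of your argument for this direction collapses. The paper does not argue this direction here; it defers to an explicit computation in \cite[Example~3]{Ch3}.

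\textbf{The ``only if'' direction, Case~3.} Your assertion that ``the GR submodules of sufficiently long preinjective modules are forced to lie among the homogeneous modules $H_i$'' is unjustified and, in fact, not how the argument goes. By Lemma~\ref{onemap} a given preinjective $M$ receives maps from exactly one quasi-simple in each tube, and for the modules the paper considers that quasi-simple in the $Y$-tube is the \emph{simple} one $S$ (with $\tau S\cong Y$), not $Y$ itself; so your phrase ``whose GR submodule in the $Y$-tube is some $Y_j$'' does not apply. The paper instead argues by contradiction: assuming $\mathcal{I}\cap\mathcal{C}=\emptyset$ (hence no preinjective has $H_i$, $i\geq 2$, as GR submodule) and $t>1$, it exhibits the specific modules $\tau^{um}I$ (with $I$ the injective envelope of $S$ and $m=[r,s]$) whose GR submodules must be $X_j$ for large $u$; then, since such $M$ would be landing, $\mu(X_j)<\mu(Y_{s+1})<\mu(M)$ forces $|Y_{s+1}|>|M|>|X_j|$ via Proposition~\ref{epi}(1), contradicting the choice $|X_j|>|Y_{s+1}|$. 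The central preinjective modules in Case~3 thus have GR submodules of the form $X_j$, not homogeneous ones; your attempt to produce homogeneous GR submodules here does not work.
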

\begin{proof}
If $n=1$, then $Q$ is obvious of a sink-source orientation, and the central part
contains precisely the regular modules (see, for example, \cite{R1}). Now we assume
$n\geq 2$. Thus there exists an exceptional regular component.  If
$\mathcal{I}\cap\mathcal{C}=\emptyset$, a sincere indecomposable
preinjective is always a landing module. Then the proof of
Proposition \ref{landing} implies that there is no indecomposable
preinjective modules $M$ containing  a homogeneous module $H_i,i\geq
2$ as GR submodules. Therefore, by above partition of the tame quiver of type $\widetilde{\mathbb{A}}_n$,  we
need only to consider Case 3 and show that
$\mathcal{I}\cap\mathcal{C}=\emptyset$ implies $t=1$ (let's keep the
notations in case 3). Assume for a contradiction that $t>1$. Let $S$
be the simple module corresponding to $s(\beta_t)$. Thus $S$ is a
quasi-simple of rank $s$ and $ \tau S\cong Y$. Let $I$ be the
(indecomposable) injective cover of $S$. It is obvious that
$\Hom(X,I)\neq 0$. Consider the indecomposable preinjective modules
$\tau^{um}I$, where $u$ is a positive integer and  $m=[r,s]$ is the
lowest common multiple of $r$ and $s$.  Since
$\Hom(S,\tau^{um}I)\neq 0\neq \Hom(X,\tau^{um}I)$, a GR submodule of
$\tau^{um}I$ is either $S_i$ or $X_j$. Notice that
$\mu(H_1)>\mu(S_i)$ for all $i\geq 0$ since $S$ is simple.
Therefore, for $u$ large enough, the unique GR submodule of
$\tau^{um}I$ is $X_j$ for some $j\geq 1$ because no indecomposable
preinjective modules containing $H_i$ as GR submodules for $i\geq
2$. In particular there are infinitely many preinjective modules
containing GR submodules of the form $X_j,j\geq 1$. Thus we may
select a GR inclusion $X_j\subset M$ with $M\in\mathcal{I}$ such $|X_j|>|Y_{s+1}|$. Because
$\mu(X_j)<\mu(Y_{s+1})<\mu(M)$, we have $|Y_{s+1}|>|M|$.  This
contradicts $|X_j|>|Y_{s+1}|$. Thus we have $t=1$ and $Q$ is equipped with a
sink-source orientation.

Conversely, if  $Q$ is with a
sink-source orientation, we may see directly that
$\mathcal{I}\cap\mathcal{C}=\emptyset$ (for details, see \cite[Example 3]{Ch3}).
\end{proof}

\begin{theo}Let $Q$ be a tame quiver of type $\widetilde{\mathbb{A}}_n$. Then $\mathcal{I}\cap\mathcal{C}\neq
\emptyset$ if and only if $|\mathcal{I}\cap\mathcal{C}|=\infty$.
\end{theo}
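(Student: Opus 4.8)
The implication $|\mathcal I\cap\mathcal C|=\infty\Rightarrow\mathcal I\cap\mathcal C\neq\emptyset$ is trivial, so suppose $\mathcal I\cap\mathcal C\neq\emptyset$. By the previous theorem this means $Q$ does not carry a sink-source orientation, hence $Q$ belongs to Case 1, Case 2, or Case 3 with $t>1$ of the partition above. In Cases 1 and 2 the discussion preceding this theorem already exhibited an infinite family of indecomposable preinjective modules all of whose GR submodules are homogeneous; such modules are central by Corollary \ref{cen}, so $|\mathcal I\cap\mathcal C|=\infty$ in those cases, and it remains to treat Case 3 with $t>1$.

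Keep the notation of Case 3: $p=\beta_t\cdots\beta_1$ is a longest path of arrows, $\alpha$ the arrow with $e(\alpha)=e(p)$ and $s(\alpha)$ a source, $X=M(p)$ a quasi-simple module of rank $r$ whose irreducible monomorphisms $X=X_1\ra X_2\ra\cdots$ are all GR inclusions, and $Y=M(\alpha)$ a quasi-simple module of rank $s$; by the lemma at the end of Case 3, $\mu(Y_s)\geq\mu(X_r)$ and $\mu(Y_j)>\mu(X_i)$ for all $i\geq 1$, $j>s$. First I would record that $\mu(X_r)\geq\mu(H_1)$, and hence $\mu(X_j)\geq\mu(H_1)$ for every $j\geq r$: were $\mu(X_r)<\mu(H_1)$, Proposition \ref{bigprop}(4)a would give $\mu(X_i)<\mu(H_j)$ for all $i,j$, contradicting Proposition \ref{bigprop}(5) for the tube of $X$ (by Proposition \ref{bigprop}(4)b the quasi-simple module provided there should be identified as $X$, whose powers form an ascending chain of GR inclusions — this identification deserves care). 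Then, exactly as in the proof of the previous theorem, put $S=S(s(\beta_t))$ (a simple quasi-simple module of rank $s$ with $\tau S\cong Y$), $I=I_{s(\beta_t)}$ and $M_u=\tau^{um}I$ with $m=[r,s]$; these are infinitely many pairwise non-isomorphic indecomposable preinjective modules with $|M_u|\to\infty$, and since $\tau^{um}$ fixes the tubes of $X$ and of $Y$ (and, by Lemma \ref{onemap} and the identification of supports, $S$ is the quasi-simple module of its tube seen by $M_u$), a GR submodule of $M_u$ lying in an exceptional tube is a power of $S$ or a power of $X$. As $\mu(S_i)<\mu(H_1)$ for all $i$ (Proposition \ref{bigprop}(6)) while every homogeneous module and every $X_j$ with $j\geq r$ has GR measure $\geq\mu(H_1)$, one checks that for $u\gg0$ the GR submodule of $M_u$ is some $X_j$ with $j\geq r$ or some $H_l$ with $l\geq 2$; in particular there are infinitely many indecomposable preinjective modules whose GR submodule is $X_j$ with $j$ unbounded (since $|M_u|\to\infty$).

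Now I would check that all but finitely many of these $M_u$ lie in $\mathcal I\cap\mathcal C$. If the GR submodule of $M_u$ is $H_l$ with $l\geq 2$, then $M_u$ is central by Corollary \ref{cen}. Otherwise the GR submodule is $X_j$ with $j\geq r$; choosing $u$ large enough that also $|X_j|>|Y_{s+1}|$ and writing $M=M_u$, we have $\mu(M)=\mu(X_j)\cup\{|M|\}$ with $|M|>|X_j|>|Y_{s+1}|$. On the one hand $\mu(M)>\mu(X_j)\geq\mu(H_1)$, so $M$ is not a take-off module (Proposition \ref{bigprop}(3)). On the other hand $\mu(Y_{s+1})>\mu(X_j)$ by the lemma at the end of Case 3, and the smallest element not common to $\mu(Y_{s+1})$ and $\mu(X_j)$ then lies in $\mu(Y_{s+1})$ and is at most $|Y_{s+1}|<|X_j|<|M|$, so adjoining $|M|$ to $\mu(X_j)$ does not change which of the two sets it lies in; hence $\mu(Y_{s+1})>\mu(M)$, and since $Y_{s+1}$ is regular, $M$ is not a landing module (by the corollary above characterising landing modules). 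Thus $M\in\mathcal I\cap\mathcal C$, and as there are infinitely many such $M$ we obtain $|\mathcal I\cap\mathcal C|=\infty$. The step I expect to be the main obstacle is the one identifying, for $u\gg0$, the GR submodule of $M_u$ as a power of $X$ with exponent $\geq r$ (or a homogeneous $H_l$ with $l\geq 2$) rather than a ``small'' module $S_i$: this requires showing that $M_u$ actually contains a submodule of GR measure $\geq\mu(H_1)$ — a high enough power of $X$ or a homogeneous submodule — which, though standard, needs Lemma \ref{onemap}, a description of the supports of the quasi-simple modules, and a dimension count; the preliminary claim $\mu(X_r)\geq\mu(H_1)$ and the final order comparison $\mu(Y_{s+1})>\mu(M)$ must likewise be argued with the explicit length data of Case 3 in hand.
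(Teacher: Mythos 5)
Your proposal follows the paper's own strategy almost step for step: reduce to Case 3 with $t>1$ (Cases 1 and 2 already supply infinitely many preinjective modules with homogeneous GR submodules, central by Corollary \ref{cen}), then consider the modules $\tau^{um}I$ with $m=[r,s]$, identify their GR submodules as powers of $X$ (or homogeneous modules $H_l$, $l\geq 2$), and show these modules are neither landing (their measure lies below $\mu(Y_{s+1})$, a regular measure) nor take-off. Your direct computation that $\mu(M)=\mu(X_j)\cup\{|M|\}<\mu(Y_{s+1})$ once $|X_j|>|Y_{s+1}|$ is a clean way of making explicit what the paper phrases as ``to avoid a contradiction as in the proof of the above theorem''; that part is fine.

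The one genuine defect is your preliminary claim $\mu(X_r)\geq\mu(H_1)$ and, above all, its justification. Proposition \ref{bigprop}(5) only provides \emph{some} quasi-simple $Z$ in the tube of $X$ with $\mu(Z_r)\geq\mu(H_1)$, and nothing you cite forces $Z\cong X$: if $Z\neq X$, then by (4)b the chain $Z_r\subset Z_{r+1}\subset\cdots$ consists of GR inclusions, which is perfectly compatible with the Case 3 fact that $X_1\subset X_2\subset\cdots$ are GR inclusions (two distinct quasi-simples of one tube can simultaneously have this property), so no contradiction arises and the ``identification'' you flag is simply not established. The claim itself can be verified in Case 3 by the explicit string/submodule data (any chain of indecomposable submodules of $H_1$ misses either $t$ or $t+1$, while $\mu(X_r)\supseteq\{1,\ldots,t+1\}$), but as written your argument for it is a gap. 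Fortunately the claim is dispensable, and the paper never uses it: to rule out $S_i$ as a GR submodule of $\tau^{um}I$ you only need a monomorphism $H_1\ra \tau^{um}I$ (available once the length exceeds $2|\delta|$) together with Proposition \ref{bigprop}(6), exactly as in the proof of the preceding theorem; and to see that $\tau^{um}I$ is not a take-off module you can quote the second half of Proposition \ref{bigprop}(3) directly, which gives $\mu(\tau^{um}I)>\mu(H_1)$ for $\udim\tau^{um}I>\delta$ without any reference to $\mu(X_r)$. With that substitution your argument is complete and coincides with the paper's.
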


\begin{proof} We have seen in Corollary \ref{cen} that an indecomposable module containing homogeneous modules
$H_i, i\geq 2$ as GR submodules is a central module. Thus we may
assume that there are only finitely many indecomposable preinjective
module containing homogenous modules as GR submodules. Thus, we need
only consider Case 3. Let's keep the notations there. Then
$\mathcal{I}\cap\mathcal{C}\neq \emptyset$ implies that $Q$ is not
with a sink-source orientation. In particular, the length $t$ of the
longest path of arrows $\beta_t\cdots\beta_1$ is greater than $1$.
Therefore,  $\mu(Y_j)>\mu(X_i)$ for all $i\geq 1, j>s$. Again let
$m=[r,s]$. By assumption, the GR submodules of $\tau^{um}I$ are of
the form $X_i$ for almost all $u\geq 1$. To avoid a contradiction as
in the proof of above theorem, $\mu(\tau^{um}I)$ is smaller
than $\mu(Y_{s+1})$ for $u$ large enough and thus almost all
$\tau^{um}I$ are central modules.
\end{proof}
\bigskip

{\bf Acknowledgments.} The author is grateful to the referees for valuable
comments and helpful suggestions, which make the article more
readable.  He also wants to thank Jan Schr\"oer for many useful discussions.


\begin{thebibliography}{99}

\bibitem{ARS} M.Auslander; I.Reiten; S.O.Smal\o,
{\sl Representation Theory of Artin Algebras}. Cambridge studies in
advanced mathematics {\bf 36} (Cambridge University Press,
Cambridge, 1995).


\bibitem{AS} M.Auslander; S.O.Smal\o,
Preprojective modules over Artin algebras. {\sl J. Algebra} {\bf
66}(1980), 61--122.

\bibitem{BR} M.C.R.Butler; C.M.Ringel,
Auslander-Reiten sequences with few middle terms and applications to
string algebras. {\sl Comm. Algebra} {\bf 15}(1987), 145--179.

\bibitem{CB} W.Crawley-Boevey,
Maps between representations of zero-relation algebras, {\sl J.
Algebra} {\bf 126} (1989), 259-263.

\bibitem{Ch1} B.Chen,
 The Gabriel-Roiter measure for representation-finite hereditary
algebras.  {\sl J. Algebra} {\bf 309}(2007), 292--317.

\bibitem{Ch2} B.Chen,
The Auslander-Reiten sequences ending with Gabriel-Roiter factor
modules over tame hereditary algebras. {\sl J. Algebra Appl.} {\bf
6}(2007), 951--963.

\bibitem{Ch3} B.Chen,
The Gabriel-Roiter measure for $\widetilde{\mathbb{A}}_n$. {\sl J.
Algebra} {\bf 320}(2008), 2891--2906.


\bibitem{Ch4}B.Chen,
Comparison of Auslander-Reiten theory and Gabriel-Roiter measure
approach to the module categories of tame hereditary algebras. {\sl
Comm. Algebra} {\bf 36}(2008), 4186--4200.

\bibitem{Ch5}B.Chen,
The Gabriel-Roiter submodules of homogeneous simple modules.
Submitted.

\bibitem{DR} V.Dlab; C.M.Ringel,
{\it Indecomposable representations of graphs and algebras}. Mem.
Amer. Math. Soc.  {\bf 6}(1976), no.173.


\bibitem{R1} C.M.Ringel,
The Gabriel-Roiter measure. {\sl Bull. Sci. math.} {\bf
129}(2005),726-748.

\bibitem{R2} C.M.Ringel,
Foundation of the representation theory of artin algebras, Using the
Gabriel-Roiter measure. 
Trends in Representation Theory of Algebras and Related Topics. (Workshop Queretaro, Mexico, 2004). 
{\sl Contemp. Math.}{\bf 406}(2006), 105-135.
\end{thebibliography}
\end{document}